\definecolor{darkgreen}{RGB}{65,165,118}
\definecolor{darkred}{RGB}{180,0,0}
\newcommand{\hadgesh}[1]{\emph{\color{darkred}#1}}
\numberwithin{equation}{section}
\newtheorem{theorem}{Theorem}[section]
\newtheorem{proposition}[theorem]{Proposition}
\newtheorem{lemma}[theorem]{Lemma}
\newtheorem{corollary}[theorem]{Corollary}
\theoremstyle{definition}
\newtheorem{definition}[theorem]{Definition}
\newtheorem{example}[theorem]{Example}
\newtheorem{Th}{Theorem}
\theoremstyle{remark}
\newtheorem{remark}[theorem]{Remark}
\newcommand{\Z}{\mathbb{Z}}
\newcommand{\lk}{\mathrm{lk}}
\newcommand{\dl}{\mathrm{dl}}
\newcommand{\xto}[1]{\xrightarrow{#1}}
\newcommand{\proj}{\mathrm{proj}}
\newcommand{\hofib}{\mathrm{hofib}}
\newcommand{\Ker}{\mathrm{Ker}}
\newcommand{\coKer}{\mathrm{Coker}}
\newcommand{\Ima}{\mathrm{Im}}
\newcommand{\PP}{\mathbf{P}}
\newcommand{\F}{\mathbf{F}}
\newcommand{\K}{\mathbf{K}}
\newcommand{\E}{\mathbf{E}}
\newcommand{\C}{\mathbf{C}}
\newcommand{\Fl}{\mathrm{Fl}}
\newcommand{\QQ}{\mathbb{Q}}
\title[Polyhedral products with finite generalised Postnikov decomposition]{Characterisation of polyhedral products with finite generalised Postnikov decomposition}
\author{Kouyemon Iriye}
\address{Department of Mathematical Sciences, Osaka Prefecture University, Sakai, 599-8531, Japan}
\email{kiriye@mi.s.osakafu-u.ac.jp}
\author{Daisuke Kishimoto}
\address{Department of Mathematics, Kyoto University, Kyoto, 606-8502, Japan}
\email{kishi@math.kyoto-u.ac.jp}
\author{Ran Levi}
\address{Institute of Mathematics, University of Aberdeen, Aberdeen, AB24 3UE, UK}
\email{r.levi@abdn.ac.uk}
\subjclass[2010]{Primary 55S45, Secondary 55P15, 20F36}
\keywords{polyhedral product, Postnikov tower, generalised Postnikov tower, graph product of groups}
\begin{document}

\baselineskip.525cm

\maketitle

\begin{abstract}
  A generalised Postnikov tower for a space $X$ is a tower of principal fibrations with fibres generalised Eilenberg-MacLane spaces, whose inverse limit is weakly homotopy equivalent to $X$. In this paper we give a characterisation of a polyhedral product $Z_K(X,A)$ whose universal cover either admits a generalised Postnikov tower of finite length, or is a homotopy retract of a space admitting such a tower. We also include $p$-local and rational versions of the theorem. We end with a group theoretic application.
\end{abstract}



\section{Introduction}

Let $(X,A)$ be a pair of topological spaces and let $K$ be a simplicial complex. The polyhedral product $Z_K(X,A)$ is a space obtained by forming Cartesian products of the spaces $X$ and $A$, one for each simplex in $K$, and gluing them together according to the face relations among the simplices in $K$.  Polyhedral products were introduced by Bahri, Bendersky, Cohen, and Gitler \cite{BBCG} as a generalization of two fundamental constructions in toric topology: the moment-angle complex and the Davis-Januszkiewicz space. Similar constructions can be found in earlier works in algebraic topology \cite{A,P,KT}. Polyhedral products are  combinatorially defined objects and recent studies show that the interaction between topology and combinatorics through polyhedral products provides a rich source of ideas and applications \cite{BBCG,GT2,GW,IK1,IK2,KL}. A comprehensive survey can be found in \cite{BBC}.

It is particularly interesting to understand how a given homotopical property of $Z_K(X,A)$ is related to combinatorial and homotopical properties of  $K$ and $(X,A)$. For example, combinatorial conditions on $K$, which guarantee that the polyhedral product $Z_K(CX,X)$ (here $CX$ means the cone on $X$) is a suspension, are studied in \cite{GT2,GW,IK1,IK2}.

In this paper, we study polyhedral products $Z_K(X,A)$ that admit a Postnikov type decomposition satisfying certain finiteness properties, and characterise such spaces in terms of properties of the simplicial complex $K$ and the pair $(X,A)$.  We start by introducing the largest class of spaces to which our results apply. Throughout this paper all spaces are assumed to be of the homotopy types of a CW-complex.

\begin{definition}\label{Def-PP}
  Let $\PP$ denote the minimal class that contains all connected spaces, whose universal covers admit finite type generalised Postnikov towers of finite length, and that is closed under homotopy retracts.
\end{definition}
See Section \ref{Postnikov} for the definition of generalised Postnikov towers.  (See also \cite{IK}.) Spaces in $\PP$ are sometimes referred to in the literature as polyGEMs \cite{CFFS}.

We say that a connected space $X$ is \hadgesh{of finite type} if $\widetilde{H}_i(X;\Z)$ is a finitely generated abelian group for each $i\ge 1$. Thus, a simply-connected space $X$ is of finite type if and only if $\pi_i(X)$ is a finitely generated abelian group for each $i>1$. Note that the universal cover of each space in $\PP$ is of finite type.

Our results become a bit more specific for certain subclasses of $\PP$.

\begin{definition}
	\begin{enumerate}
		\item Let $\F$ be the class of all connected spaces whose universal covers are of finite type and have non-trivial homotopy groups in finitely many dimensions.
		\item Let $\K$ be the class of all connected spaces whose universal covers are of finite type and have finitely many non-trivial $k$-invariants.
	\end{enumerate}

\end{definition}

We consider the question what are necessary and sufficient conditions for a polyhedral product $Z_K(X,A)$ to belong to each one of these classes, depending on the pair $(X,A)$ and the simplicial complex $K$. We are now ready to state our main theorem.

\begin{Th}
  \label{main}
  Let $K$ be a simplicial complex, and let $(X,A)$ be an NDR-pair. Let $F$ denote the homotopy fibre of the inclusion $A\to X$, and assume that each connected component of $F$ is of finite type. Then $Z_K(X,A)$ belongs to $\PP$ if and only if one of the following conditions holds:
  \begin{enumerate}
    \item $X\in\PP$ and $K$ is a simplex;
    \item $X\in\PP$, $F$ is an acyclic space, and $K$ is not a simplex;
    \item $X\in\PP$, $F$ is a disjoint union of acyclic spaces, and $K$ is a flag complex that is not a simplex.
  \end{enumerate}
Furthermore, the statement above holds for $\PP$ replaced throughout by $\F$ or by $\K$.
\end{Th}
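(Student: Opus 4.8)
The plan is to prove both implications by transferring every question to the universal cover, using repeatedly that each of $\PP$, $\F$ and $\K$ is closed under finite products and under homotopy retracts, and that membership in each class depends only on the weak homotopy type of the universal cover. Two structural inputs drive the argument. First, for a full subcomplex $K_I\subseteq K$ the basepoint of $A$ furnishes a retraction exhibiting $Z_{K_I}(X,A)$ as a homotopy retract of $Z_K(X,A)$; taking $I=\{v\}$ shows $X$ is a retract, and taking $I=\{i,j\}$ for a non-edge exhibits the two-point polyhedral product $(X\times A)\cup_{A\times A}(A\times X)$ as a retract. Second, I will use the fibration $Z_K(CF,F)\to Z_K(X,A)\to\prod_{v}X$ whose fibre is the polyhedral product of the cone pair on $F=\hofib(A\to X)$; this converts hypotheses on $F$ into control over the map from $Z_K(X,A)$ to the product $X^{|V(K)|}$. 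The finite-type hypothesis on the components of $F$ keeps all universal covers of finite type.

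For necessity, suppose $Z_K(X,A)\in\PP$. Retract-closure gives $X\in\PP$ at once (and $X\in\F$ or $\K$ when one starts from those classes). If $K$ is a simplex we are in case (1). If not, pick a non-edge $\{i,j\}$; the retract $(X\times A)\cup_{A\times A}(A\times X)$ lies in $\PP$, and I would analyse it through the fibre $Z_{\{i,j\}}(CF,F)=F*F$ over $X\times X$. The crucial point is to show that if some component of $F$ fails to be acyclic then the join $F*F$ is non-acyclic, and the associated Whitehead-product/Hilton--Milnor growth forces the universal cover to carry non-trivial homotopy in infinitely many degrees, which no finite generalised Postnikov tower can support; hence every component of $F$ is acyclic. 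To separate cases (2) and (3) I would then use the flag condition: when $K$ is not flag, a minimal non-face of dimension at least two restricts to a full non-flag subcomplex whose two-point and boundary retracts obstruct a disconnected $F$, forcing $F$ connected and landing us in case (2); when $K$ is flag, a disconnected $F$ is admissible and we are in case (3).

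For sufficiency, case (1) is immediate, since $Z_K(X,A)=X^{|V(K)|}$ is a finite product of $X$ and each class is product-closed. In case (2), acyclicity of $F$ gives acyclicity of the fibre $Z_K(CF,F)$, so $Z_K(X,A)\to X^{|V(K)|}$ is an acyclic map and in particular a homology isomorphism; provided it induces an isomorphism on $\pi_1$ (automatic when $F$ is simply connected, and otherwise handled by the plus-construction formalism), the induced map of simply connected finite-type universal covers is a homology isomorphism, hence a weak equivalence, so $\widetilde{Z_K(X,A)}\simeq\widetilde{X^{|V(K)|}}$ and membership follows from that of $X$. In case (3), with $K$ flag the group $\pi_1(Z_K(X,A))$ is the graph product of copies of $\pi_1(X)$ over the $1$-skeleton of $K$, and I would identify the universal cover with a (generally infinite) polyhedral product assembled from copies of $\widetilde{X}$ indexed by this graph product and the flag structure of $K$. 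Flagness is what makes this assembly homotopically product-like rather than wedge-like, so that the universal cover inherits its class from $\widetilde{X}$, while the hypothesis that $F$ is a disjoint union of acyclic spaces is precisely what lets the pieces be glued without creating new homotopy.

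Finally, the uniform replacement of $\PP$ by $\F$ or $\K$ costs nothing extra: all three classes are detected on the weak homotopy type of the universal cover and share the product- and retract-closure used above, and every comparison I construct is a weak equivalence or retraction of universal covers, so membership transfers simultaneously, with ``$X\in\PP$'' replaced by ``$X\in\F$'' or ``$X\in\K$'' in the conclusion. I expect case (3) to be the main obstacle in both directions: identifying the universal cover of $Z_K(X,A)$ for a flag $K$ as a manageable infinite polyhedral product and proving that flagness precludes infinitely many non-trivial homotopy groups. A secondary difficulty is making the ``infinitely many homotopy groups'' blow-up in necessity rigorous for the class $\PP$ itself, where one cannot simply count non-vanishing homotopy groups but must directly obstruct the existence of a finite generalised Postnikov tower.
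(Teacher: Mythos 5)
Your overall architecture (retracts of full subcomplexes, the fibration $Z_K(CF,F)\to Z_K(X,A)\to X^m$, product- and retract-closure of the classes) matches the paper's, but there is a genuine gap at the heart of the necessity direction. You propose to rule out a non-acyclic component of $F$ by arguing that Hilton--Milnor/Whitehead-product growth in the join $F*F$ "forces the universal cover to carry non-trivial homotopy in infinitely many degrees, which no finite generalised Postnikov tower can support." That last clause is false: the fibres of a generalised Postnikov tower are GEMs, i.e.\ possibly infinite products $\prod_n K(A_n,n)$, so a tower of length one already supports non-trivial homotopy in infinitely many degrees. Counting non-vanishing homotopy groups obstructs membership in $\F$, and a Whitehead-product-length argument works rationally (the paper uses exactly this in Lemma \ref{wedge of spheres}), but for $\PP$ itself the paper needs a much deeper input: the F\'elix--Halperin--Lemaire/Moore--Smith theorem that the mod $p$ loop space homology of a space with a finite-length $p$-local generalised Postnikov tower is a solvable Lie algebra with nilpotent elements, whence a simply-connected suspension lying in $\PP_p$ must be mod $p$ acyclic (Proposition \ref{FHL} and Corollaries \ref{acyclic p-local}, \ref{acyclic}). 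You flag this as a "secondary difficulty," but it is the central one, and your proposal contains no mechanism to supply it. A related unacknowledged dependency: to pass from $Z_K(X,A)\in\PP$ to $F*F\in\PP$ (or to $Z_K(CF,F)\in\PP$) you need that the fibre of a fibration with total space and base in $\PP$ is again in $\PP$, which is itself a non-trivial result (Proposition \ref{P}, resting on Lemmas \ref{connectivity} and \ref{GPT}).

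The sufficiency direction also has problems in case (3). Your claim that $\pi_1(Z_K(X,A))$ is the graph product of copies of $\pi_1(X)$ is the formula for $Z_K(X,*)$, not for a general pair $(X,A)$, where $\pi_1$ of the fibre $Z_K(CF,F)$ contributes; and the plan to exhibit the universal cover as an infinite polyhedral product of copies of $\widetilde{X}$ is not carried out and is not how flagness enters. The paper's route is to show that $Z_K(CF,F)$ is aspherical: by the suspension-invariance result (Proposition \ref{invariance}) one may replace $F$ by the discrete set $\pi_0(F)$, and then the flag hypothesis gives a $K(\pi,1)$ via the graph-product statement (Lemma \ref{K(pi,1)}). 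With an aspherical fibre, $Z_K(X,A)$ and $X^m$ have the same $2$-connected cover, the universal cover of $Z_K(X,A)$ sits in a principal fibration over $K(G,2)$ with total space $\widetilde{X}^m$, and Proposition \ref{P} closes the argument. Your case (2) is essentially right (there the fibre is actually contractible, since $\Sigma F\simeq *$ and invariance applies), and your reduction of the $\F$ and $\K$ cases to the same formal properties is sound.
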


The question of when a polyhedral product of the form $Z_K(X,*)$ is a $K(\pi,1)$ was treated in \cite{K,BBC}  (see Lemma \ref{K(pi,1)} below). The proof there does not generalise to our context. However the result for this special case is used in the proof of the theorem (see the proof of Theorem \ref{Z(CX,X) p-local}).

Let $p$ be a prime or zero. By analogy to the integral case, we say that a connected space $X$ is of finite $p$-local type if $\widetilde{H}_n(X;\Z)$ is a finitely generated $\Z_{(p)}$-module for each $n>0$. Define a $p$-local analog of the classes $\PP,\F$ and $\K$ by requiring that all spaces are $p$-local spaces  of finite $p$-local type. Let $\PP_p,\F_p$ and $\K_p$ denote the resulting classes respectively. We will use the term "rational" instead of ``$0$-local'' as usual.

\begin{Th}
  \label{main p-local}
  Let $p$ be a prime and $\C_p$ denote one of the classes $\PP_p, \F_p$ or $\K_p$. Let $K$ be a simplicial complex, and let $(X,A)$ be an NDR-pair. Suppose that each connected component of the homotopy fibre $F$ of the inclusion $A\to X$ is of finite $p$-local type. Then $Z_K(X,A)$ belongs to $\C_p$ if and only if one of the following conditions holds:
  \begin{enumerate}
    \item $X\in\C_p$ and $K$ is a simplex;
    \item $X\in\C_p$, $F$ is $p$-locally acyclic, and $K$ is not a simplex;
    \item $X\in\C_p$, $F$ is a disjoint union of $p$-locally acyclic spaces, and $K$ is a flag complex that is not a simplex.
  \end{enumerate}
\end{Th}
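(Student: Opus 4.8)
The plan is to prove Theorem \ref{main p-local} by running the argument for the integral statement, Theorem \ref{main}, inside the category of $p$-local spaces, systematically replacing integral homology by $\Z_{(p)}$-homology, ordinary acyclicity by $p$-local acyclicity, and the classes $\PP,\F,\K$ by $\PP_p,\F_p,\K_p$. This translation is legitimate because the constructions feeding the proof — passage to the universal cover, formation of homotopy fibres, and the fibrations relating $Z_K(X,A)$ to the product of copies of $X$ over the vertices of $K$ — all commute with $p$-localisation on simply-connected covers, and because the defining feature of the three classes is local: localisation carries principal fibrations with generalised Eilenberg--MacLane fibres to principal fibrations with GEM fibres and $k$-invariants to $k$-invariants, so membership in $\C_p$ is detected on the $p$-local universal cover by a finite tower exactly as in the integral case.

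First I would record the $p$-local versions of the structural results used for Theorem \ref{main}: closure of $\PP_p,\F_p,\K_p$ under finite products, retracts and the relevant fibrations; the localisation-of-towers statement above; and the $p$-local form of the homotopy fibration $Z_K(CF,F)\to Z_K(X,A)\to\prod_{v\in V(K)}X$. With these in hand the ``if'' direction splits into three cases. When $K$ is a simplex, $Z_K(X,A)\simeq\prod X$, so $X\in\C_p$ gives the conclusion by closure under products. When $F$ is $p$-locally acyclic, the fibre $Z_K(CF,F)$ is $p$-locally acyclic by the stable decomposition of polyhedral products, so $Z_K(X,A)\to\prod X$ is a $\Z_{(p)}$-homology equivalence and membership transports across it on universal covers. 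When $F$ is a disjoint union of $p$-locally acyclic spaces and $K$ is flag, the required input is the $p$-local cone-pair result, Theorem \ref{Z(CX,X) p-local}, together with the graph-product description of $\pi_1(Z_K(X,A))$.

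For the ``only if'' direction I would again mirror Theorem \ref{main}. Since $X$ is a homotopy retract of $Z_K(X,A)$ and $\C_p$ is retract-closed, membership forces $X\in\C_p$. If $K$ is a simplex this already gives case (1), so assume $K$ is not a simplex and fix a minimal non-face. Restricting to the corresponding full subcomplex exhibits inside $Z_K(X,A)$ an iterated join of copies of $F$; should some connected component of $F$ carry non-trivial reduced $\Z_{(p)}$-homology, the $p$-local homology of the universal cover of $Z_K(X,A)$ grows without bound, producing either infinitely many non-trivial $p$-local homotopy groups, infinitely many $p$-local $k$-invariants, or an infinite tower, and so violating membership in $\F_p$, $\K_p$ or $\PP_p$ respectively. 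Hence every component of $F$ is $p$-locally acyclic. If $F$ is connected we are in case (2); if $F$ is disconnected then $\pi_1(Z_K(X,A))$ is a non-trivial graph product governed by the one-skeleton of $K$, and the $p$-local $K(\pi,1)$ analysis (Lemma \ref{K(pi,1)} together with Theorem \ref{Z(CX,X) p-local}) shows that the universal cover admits a finite $p$-local tower only when $K$ is flag, giving case (3).

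The hard part will be the non-simply-connected bookkeeping. The three classes are defined through the universal cover, yet a polyhedral product typically has a large, non-nilpotent fundamental group — a graph product of the groups $\pi_1(X)$ — so one cannot localise $Z_K(X,A)$ as a nilpotent space. The delicate point is to make precise the procedure of localising the universal cover while retaining $\pi_1$, and to verify that both the $p$-local acyclicity obstruction and the flagness obstruction survive it; in particular one must check that the $p$-local cone-pair theorem and the graph-product $K(\pi,1)$ analysis interact correctly once everything has been localised on the cover. Establishing the $p$-local fibre computation and controlling the interaction between $\pi_0(F)$, $\pi_1(X)$ and the combinatorics of $K$ is where the main effort lies.
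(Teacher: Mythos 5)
Your overall architecture is the paper's: the published proof of Theorem \ref{main p-local} is literally ``run the proof of Theorem \ref{main rephrase} with Theorem \ref{Z(CX,X) p-local} in place of Theorem \ref{Z(CX,X)}, using the $p$-local analogs of Lemmas \ref{lem-products} and \ref{F-K} and Proposition \ref{P}.'' But your ``only if'' direction contains a genuine gap where it matters most. First, the iterated join of copies of $F$ does not sit inside $Z_K(X,A)$; it is a homotopy retract of the \emph{fibre} $Z_K(CF,F)$ of the fibration of Proposition \ref{fibration}, so before you can extract $\Sigma^{|M|-1}F_0^{\wedge|M|}$ you must first place $Z_K(CF,F)$ in $\C_p$ via the $p$-local Proposition \ref{P} (and its $\F_p,\K_p$ analogs). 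Second, and more seriously, your obstruction mechanism --- ``the $p$-local homology grows without bound, producing infinitely many homotopy groups, infinitely many $k$-invariants, or an infinite tower'' --- is false. If $F_0$ is, say, a $p$-local $2$-sphere, then $\Sigma^{|M|-1}F_0^{\wedge|M|}$ is a $p$-local sphere whose homology is concentrated in a single degree, yet it belongs to none of $\F_p$, $\K_p$, $\PP_p$. Excluding it from $\F_p$ already needs Serre/McGibbon--Neisendorfer-type input, and excluding it from $\PP_p$ is exactly the content of Proposition \ref{FHL} and Corollary \ref{acyclic p-local}, i.e.\ the mod-$p$ F\'elix--Halperin--Lemaire argument via the Moore--Smith solvability of $H_*(\Omega Y;\Z/p)$ for towers of finite length. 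This is the real engine of Lemma \ref{discrete} and hence of Theorem \ref{Z(CX,X) p-local}; a counting argument on homology cannot replace it.

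Two smaller points. In the ``if'' direction for cases (2) and (3) you stop at ``$Z_K(CF,F)$ is a $K(\pi,1)$ (or contractible)'' plus ``the graph-product description of $\pi_1$,'' but knowing $\pi_1(Z_K(X,A))$ does not by itself produce a finite tower on the universal cover. The paper's step here is that asphericity of the fibre makes the $2$-connected covers of $Z_K(X,A)$ and $X^m$ agree, whence the universal cover of $Z_K(X,A)$ is the homotopy fibre of a map $\widetilde{X}^m\to K(G,2)$ with $G=\coKer(\pi_2(Z)\to\pi_2(\widetilde{X}^m))$, and then the $p$-local Proposition \ref{P} (resp.\ Lemma \ref{F-K}(3)) closes the argument; also note that in case (2) the fibre is genuinely weakly contractible (since $\Sigma F$ is $p$-local and $p$-locally acyclic, Proposition \ref{invariance} applies), which is what you need --- a mere $\Z_{(p)}$-homology equivalence would not transport membership. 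Finally, the ``non-simply-connected bookkeeping'' you flag as the hard part is a non-issue in this formulation: membership in $\C_p$ is defined by requiring the universal cover to already be $p$-local of finite $p$-local type, so no localisation functor is ever applied to $Z_K(X,A)$.
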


For a simply-connected space $X$ whose rational homology is finite dimensional, the well known dichotomy theorem of F\'elix, Halperin and Thomas \cite[Theorem 1.1]{FHT1} states that $\pi_*(X)\otimes\QQ$ is either a finite dimensional vector space, in which case $X$ is called \hadgesh{elliptic}, or a graded vector space that grows exponentially with dimension, in which case it is called \hadgesh{hyperbolic}.  In \cite{HST} the authors give necessary and sufficient conditions for $Z_K(X,A)$ to be elliptic. We believe that the statement is incomplete since the case where the homotopy fibre of the inclusion $A\to X$ is disconnected is not considered. This may be because of an implicit assumption that all spaces considered are simply connected. In our discussion below no simple connectivity assumption is made.

\begin{definition}
Let $\E_0$ denote the class of connected spaces $X$ such that the universal cover of $X$ is a rational elliptic space, i.e. such that $\widetilde{X}$ is rational and has finite dimensional rational homotopy  and rational homology.
\end{definition}

Now we state the rational analog of Theorem \ref{main}, including amendment of \cite[Theorem 1.2]{HST} for elliptic spaces. We say that a space $X$ is a homology rational sphere if $X$ is connected and $\widetilde{H}_*(X;\Z)\cong\mathbb{Q}$.

\begin{Th}
  \label{main rational}
 Let $\C_0$ denote one of the classes $\PP_0, \K_0, \F_0$ or $\E_0$. Let $K$ be a simplicial complex, $(X,A)$ be an NDR-pair, and let $F$ denote the homotopy fibre of the inclusion $A\to X$. Suppose that each connected component of $F$ is rational. Then $Z_K(X,A)$ belongs to $\C_0$ if and only if one of the following conditions holds:
  \begin{enumerate}
    \item $X\in\C_0$ and $K$ is a simplex;
    \item $X\in\C_0$, $F$ is a rationally acyclic space, and $K$ is not a simplex;
    \item $X\in\C_0$, $F$ is a disjoint union of rationally acyclic spaces, and $K$ is a flag complex that is not a simplex;
    \item $X\in\C_0$, $F$ is a homology rational sphere, and minimal non-faces of $K$ are mutually disjoint.
  \end{enumerate}
\end{Th}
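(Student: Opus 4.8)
The plan is to reduce, exactly as in the proofs of Theorems~\ref{main} and~\ref{main p-local}, to an analysis of the ``fibre-type'' polyhedral product $Z_K(CF,F)$. I would use the homotopy fibration
\[
  Z_K(CF,F)\longrightarrow Z_K(X,A)\longrightarrow X^{m},
\]
in which $m$ is the number of vertices of $K$, together with the fact that each of the classes $\C_0\in\{\PP_0,\K_0,\F_0,\E_0\}$ is closed under finite products, under homotopy retracts, and under extensions by fibrations whose fibre and base both lie in the class. Since $X$ is a retract of $X^m$ and $X^m\in\C_0$ if and only if $X\in\C_0$, membership $Z_K(X,A)\in\C_0$ is controlled by $X\in\C_0$ together with $Z_K(CF,F)\in\C_0$. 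For $K$ a simplex one has $Z_K(CF,F)\simeq *$, giving case~(1). When $F$ is (a disjoint union of) rationally acyclic spaces the analysis of $Z_K(CF,F)$ runs parallel to the prime case of Theorem~\ref{main p-local}: if $F$ is connected and rationally acyclic then $(CF,F)\simeq_{\QQ}(*,*)$, so $Z_K(CF,F)\simeq_{\QQ}*$ for any non-simplex $K$, yielding case~(2); if $F$ is rationally discrete with several components, the $\pi_1$-analysis and the characterisation of $K(\pi,1)$ polyhedral products (Lemma~\ref{K(pi,1)}) force $K$ to be flag, yielding case~(3). The genuinely new rational content is the class $\E_0$ and case~(4), on which I concentrate.

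For the heart of case~(4) I first pin down $F$. Assume $K$ is not a simplex and $F$ is connected but not rationally acyclic. Choose a minimal non-face $\sigma$ of $K$; the full subcomplex of $K$ on the vertex set $\sigma$ is the boundary $\partial\Delta^{|\sigma|-1}$, and since polyhedral products on full subcomplexes are retracts, $Z_{\partial\Delta^{|\sigma|-1}}(CF,F)$ is a retract of $Z_K(CF,F)$. Now $Z_{\partial\Delta^{|\sigma|-1}}(CF,F)\simeq F^{*|\sigma|}$, the $|\sigma|$-fold join, and since $F$ is connected this is the simply-connected suspension $\Sigma^{|\sigma|-1}(F^{\wedge|\sigma|})$. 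A simply-connected suspension is a co-$H$-space, hence rationally equivalent to a wedge of spheres indexed by a basis of $\widetilde{H}_*(F^{\wedge|\sigma|};\QQ)\cong(\widetilde{H}_*(F;\QQ))^{\otimes|\sigma|}$; such a wedge has sub-exponential rational homotopy, the only option inside $\C_0$, precisely when it is a single sphere. Thus $\dim_{\QQ}\widetilde{H}_*(F;\QQ)=1$, i.e.\ $F$ is a homology rational sphere. This is the crux of the ``only if'' direction.

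It remains to match the combinatorics of $K$ to ellipticity once $F\simeq_{\QQ}S^n$, where $Z_K(CF,F)\simeq_{\QQ}Z_K(D^{n+1},S^n)$ is a generalised moment-angle complex whose rational invariants are governed by the Stanley--Reisner algebra of $K$ (regraded by $n$). For the ``if'' direction, if the minimal non-faces $\sigma_1,\dots,\sigma_r$ are mutually disjoint then $K$ is the join $\partial\Delta^{\sigma_1}*\cdots*\partial\Delta^{\sigma_r}*\Delta^{W}$, so
\[
  Z_K(CF,F)\;\simeq\;\prod_{i=1}^{r}Z_{\partial\Delta^{\sigma_i}}(CF,F)\;\simeq\;\prod_{i=1}^{r}F^{*|\sigma_i|}\;\simeq_{\QQ}\;\prod_{i=1}^{r}S^{|\sigma_i|(n+1)-1},
\]
a product of spheres, which is elliptic and therefore lies in every $\C_0$; combined with $X\in\C_0$ and the fibration this gives $Z_K(X,A)\in\C_0$. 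Conversely, if two minimal non-faces share a vertex then the Stanley--Reisner algebra fails to be a complete intersection, and the full subcomplex on their union yields a retract of $Z_K(CF,F)$ whose rational homotopy Lie algebra contains a free Lie algebra on two generators, so it is rationally hyperbolic; since spaces in $\C_0$ have sub-exponentially growing rational homotopy, this rules out $Z_K(X,A)\in\C_0$ unless the minimal non-faces are disjoint.

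I expect the main obstacle to be the forward (``only if'') half of the combinatorial dichotomy in case~(4): proving that a pair of minimal non-faces sharing a vertex forces rational hyperbolicity of the generalised moment-angle complex $Z_K(D^{n+1},S^n)$, uniformly in $n$ of either parity and for non-faces of arbitrary size. This asks for an explicit free sub-Lie-algebra inside $\pi_*(\Omega Z_K(D^{n+1},S^n))\otimes\QQ$ produced from the shared-vertex configuration, together with the input (which I would isolate as a lemma) that membership in $\C_0$ --- in particular being a finite-type polyGEM --- is incompatible with exponential growth of rational homotopy. A secondary technical point, needed to legitimise the reductions above, is the rational invariance $Z_K(CF,F)\simeq_{\QQ}Z_K(D^{n+1},S^n)$, i.e.\ that $Z_K$ depends only on the rational homotopy type of the pair $(CF,F)$; this is not formal and I would verify it through the functoriality and homotopy-colimit description of the polyhedral product.
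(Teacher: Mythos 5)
Your overall architecture---reducing via the fibration $Z_K(CF,F)\to Z_K(X,A)\to X^m$ of Proposition \ref{fibration} to an analysis of $Z_K(CF,F)$, with case (4) as the new rational phenomenon---matches the paper, which routes everything through Theorem \ref{Z(CX,X) rational}. However, the lemma you propose to isolate as the engine of the ``only if'' direction, that membership in $\C_0$ is incompatible with exponential growth of rational homotopy, is false for $\C_0=\PP_0$ (and $\K_0$), which is precisely the case that must carry the argument since $\E_0\subset\F_0\subset\K_0\subset\PP_0$: the finite-type GEM $\prod_{n\ge 2}K(\QQ^{2^n},n)$ admits a rational generalised Postnikov tower of length one yet has $\dim_\QQ\pi_n=2^n$. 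Growth is not the obstruction; the paper's obstruction is \emph{bracket length}. By the rational analogue of \cite[Proposition 4.6]{IK}, a space with a finite-length generalised Postnikov tower supports no non-trivial iterated Whitehead products of length exceeding the tower length, whereas a wedge of rationally non-trivial suspensions (one of them simply-connected) carries such products of arbitrary length; this is Lemma \ref{wedge of spheres}. That lemma, applied to the retract $\Sigma^{|M|-1}F^{\wedge|M|}$ (a wedge of rational spheres by \cite[Theorem 24.5]{FHT}), forces $F$ to be a single homology rational sphere, and, combined with Lemma \ref{mutually disjoint} (quoted from \cite[Proposition 4.1]{HST}), it forces the minimal non-faces to be disjoint: two intersecting minimal non-faces already yield the retract $\Sigma^{|M_1|-1}F^{\wedge|M_1|}\vee\Sigma^{|M_2|-1}F^{\wedge|M_2|}$. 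So the step you flag as ``the main obstacle''---producing a free Lie subalgebra in $\pi_*(\Omega Z_K(D^{n+1},S^n))\otimes\QQ$ from a shared vertex---is dissolved by this ready-made retract lemma; no Stanley--Reisner or complete-intersection analysis is needed, and no rational invariance $Z_K(CF,F)\simeq_{\QQ}Z_K(D^{n+1},S^n)$ is required either, since the paper only uses the integral Proposition \ref{invariance} together with $Z_{\partial\Delta^{M}}(CF,F)\simeq\Sigma^{|M|-1}F^{\wedge|M|}$.

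A secondary gap: in the disconnected case you declare $F$ ``rationally discrete'' and pass directly to the flag/$K(\pi,1)$ analysis, but the ``only if'' direction must also exclude a disconnected $F$ having a component that is a rational sphere rather than rationally acyclic. The paper handles this by noting that $S^0\vee F_0$ is a retract of $F$ for any component $F_0$, so $S^1\vee\Sigma F_0^{\wedge 2}$ is a retract of $\Sigma F^{\wedge 2}$ (using a minimal non-face of size $2$, available once $K$ is shown to be flag), and invoking Lemma \ref{wedge of spheres} once more. Until the growth criterion is replaced by the Whitehead-length criterion and this disconnected-component step is added, the proposal does not close.
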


Let $\Gamma$ be a simple graph on $m$ vertices with the edge set $E$, and let $\mathbf{G} = \{G_i\}_{i=1}^m$ be a collection of discrete groups. The graph product of $\mathbf{G}$ over $\Gamma$ is defined by
\[\mathbf{G}^\Gamma= \langle G_1, \ldots ,G_m]\;|\; [G_i,G_j]=1\;\forall\{i,j\}\in E\rangle.\]
Let $K_\Gamma(\mathbf{G})$ denote the \hadgesh{graph product kernel of $\mathbf{G}$ over $\Gamma$}, i.e., the kernel of the canonical map $\mathbf{G}^\Gamma\to \prod_{i=1}^m G_i$.

Part of the argument used in the proof of  Theorem \ref{main} is used  to obtain a short, purely homotopy theoretic proof of \cite[Theorem 12 (1)]{Kim} (See Remark \ref{Rem-discrete-group-trick}). In our version the assumption that the groups involved are countable is dropped.

\begin{Th}
	\label{application}
	Let $\mathbf{G}$ and $\mathbf{H}$ be two collections of $m$ discrete groups, and let $\Gamma$ be a simple graph on $m$ vertices. Assume that for each $1\le i\le m$, $|G_i|$ and $|H_i|$ have the same cardinality. Then
	\[K_\Gamma(\mathbf{G}) \cong K_\Gamma(\mathbf{H}).\]
\end{Th}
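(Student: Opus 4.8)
The plan is to realise each graph product kernel $K_\Gamma(\mathbf{G})$ as the fundamental group of the homotopy fibre of a projection map between polyhedral products, and then to observe that, although the total space and base of this fibration depend on the group structure of the $G_i$, the homotopy type of the \emph{fibre} depends only on $\Gamma$ and the homotopy types of the local homotopy fibres --- which are discrete spaces determined entirely by the cardinalities $|G_i|$.

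Concretely, let $K$ be the flag complex on $\Gamma$, i.e. the clique complex whose $1$-skeleton is $\Gamma$, and put $X_i=BG_i$ and $A_i=\ast$. Since $K$ is flag, Lemma \ref{K(pi,1)} gives that $Z_K(\underline{BG},\underline{\ast})$ is aspherical with fundamental group the graph product $\mathbf{G}^\Gamma$; thus $Z_K(\underline{BG},\underline{\ast})\simeq B\mathbf{G}^\Gamma$, where $\underline{BG}=(BG_1,\dots,BG_m)$. The canonical projection
\[
q_{\mathbf G}\colon Z_K(\underline{BG},\underline{\ast})\longrightarrow \prod_{i=1}^m BG_i = B\Big(\prod_{i=1}^m G_i\Big)
\]
realises the canonical surjection $\mathbf{G}^\Gamma\to\prod_i G_i$ on fundamental groups. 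As source and target are aspherical and $q_{\mathbf G}$ is $\pi_1$-surjective, the long exact homotopy sequence shows that the homotopy fibre $E_{\mathbf G}$ of $q_{\mathbf G}$ is itself aspherical with
\[
\pi_1(E_{\mathbf G})\cong \ker\Big(\mathbf{G}^\Gamma\to\prod_i G_i\Big)=K_\Gamma(\mathbf{G}),
\]
so that $E_{\mathbf G}\simeq BK_\Gamma(\mathbf{G})$.

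Next I would use the fibration computing the homotopy fibre of the projection $Z_K(\underline{X},\underline{A})\to\prod_i X_i$, which is the structural device underlying Theorem \ref{main}. Writing $F_i=\hofib(A_i\to X_i)$, a levelwise computation over the homotopy colimit presentation of the polyhedral product identifies this homotopy fibre with $\mathrm{hocolim}_{\sigma\in K}\ \prod_{i\notin\sigma}F_i$; in particular it is a homotopy functor of the tuple $(F_i)_{i=1}^m$. In the present situation $A_i=\ast$ and $X_i=BG_i$, so $F_i=\Omega BG_i$ is homotopy equivalent to the discrete space underlying $G_i$, and the homotopy type of $E_{\mathbf G}$ therefore depends only on $\Gamma$ and on the cardinalities $|G_i|$. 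Running the identical construction for $\mathbf H$ yields $E_{\mathbf H}\simeq BK_\Gamma(\mathbf{H})$ with local fibres the discrete spaces $H_i$.

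Finally, the hypothesis $|G_i|=|H_i|$ says exactly that $G_i$ and $H_i$ are discrete spaces of equal cardinality, hence homotopy equivalent, for every $i$. By the homotopy functoriality above these equivalences assemble into a homotopy equivalence $E_{\mathbf G}\simeq E_{\mathbf H}$, i.e. $BK_\Gamma(\mathbf{G})\simeq BK_\Gamma(\mathbf{H})$; since both spaces are aspherical, taking fundamental groups gives $K_\Gamma(\mathbf{G})\cong K_\Gamma(\mathbf{H})$. I expect the main obstacle to be the justification that the homotopy fibre of $q_{\mathbf G}$ is genuinely a homotopy functor of the local fibres $F_i$ --- equivalently, that forming homotopy fibres commutes with the homotopy colimit defining the polyhedral product. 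This is precisely the point where the NDR-pair hypothesis and the fibration machinery developed for Theorem \ref{main} are needed, and it is also where the argument becomes insensitive to the sizes of the groups: nothing beyond the homotopy type of the underlying discrete space of $G_i$ is ever used, which is why no countability restriction is required.
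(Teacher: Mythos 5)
Your proposal is correct and follows essentially the same route as the paper: identify $Z_{\Fl(\Gamma)}(B\mathbf{G},\ast)\simeq B\mathbf{G}^\Gamma$ via Lemma \ref{K(pi,1)}, use the fibration of Proposition \ref{fibration} over $\prod_i BG_i$ to realise $BK_\Gamma(\mathbf{G})$ as the polyhedral product of the local fibres, and observe that this fibre depends only on $\Gamma$ and the cardinalities $|G_i|$. The paper streamlines your last step by taking the fibre to be $Z_{\Fl(\Gamma)}(C\mathbf{G},\mathbf{G})$ with each $G_i$ a discrete set, so that equal cardinalities give a literal homeomorphism rather than requiring the homotopy-invariance argument (Proposition \ref{invariance}) you invoke; both justifications are available in the paper.
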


The paper is organised as follows. In Section \ref{Postnikov} we discuss some  properties of generalised Postnikov towers. In Section \ref{Polyhedral} we recall some basic properties of polyhedral products, and in Section \ref{Polyhedral-CX-X} we specialise to $Z_K(CX, X)$ and prove the main technical results of the paper, Theorems \ref{Z(CX,X) p-local}, \ref{Z(CX,X)} and \ref{Z(CX,X) rational},  that are used in Section \ref{Main} to prove Theorems \ref{main} and \ref{main p-local}. Finally in Section \ref{Groups} we prove Theorem \ref{application}.

\textit{Acknowledgement:} The authors would like to thank Ashot Minasyan for informing them of the paper \cite{Kim} on graph products of groups. Iriye and Kishimoto are supported by JSPS KAKENHI No. 19K03473 and No. 17K05248. Levi is supported in part by an EPSRC grant  EP/P025072/1.



\section{The class $\PP$}
\label{Postnikov}

In this section, we study some properties of the class $\PP$ relevant for our purpose. We first recall from \cite{IK} the definition of generalised Postnikov tower. A generalised Eilenberg-MacLane space is a (possibly infinite) product of ordinary Eilenberg-MacLane spaces. Generalised Eilenberg-MacLane spaces will be referred to  as a GEMs, for short.
A \hadgesh{generalised Postnikov tower} for a space $X$ is a sequence of spaces and maps:
$$\cdots\to X_n\to X_{n-1}\to\cdots\to X_0$$
satisfying the following conditions:
\begin{enumerate}
  \item $X_0$ is weakly contractible;
  \item for each $n$, there is a homotopy fibration $X_n\to X_{n-1}\xto{k_n} K_n$, where $K_n$ is a simply-connected GEM;
  \item there is a weak homotopy equivalence $X\to\mathrm{holim}\,X_n$.
\end{enumerate}

The classical Postnikov tower for a simply-connected space $X$ is obtained where $K_n = K(\pi_{n}(X), n+1)$. In that case the maps $k_n\colon X_{n-1}\to K_n$ are referred to as the \hadgesh{$k$-invariants} for $X$. Postnikov towers can also be defined more generally for nilpotent spaces, but we will not require this level of generality in this paper.

The following lemma is well known and we include a proof for the convenience of the reader.

\begin{lemma}\label{lem-P0P1P2}
	There are inclusions
  $$\F\subset\K\subset\PP,\quad\F_p\subset\K_p\subset\PP_p,\quad\E_0\subset\F_0\subset\K_0\subset\PP_0.$$
\end{lemma}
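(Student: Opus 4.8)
The plan is to prove the three chains of inclusions by unwinding the definitions, establishing the harder link $\K\subset\PP$ (and its local analogues) via the classical fact that a finite-type space with finitely many non-trivial $k$-invariants admits a finite generalised Postnikov tower, and then handling the trivial inclusions $\F\subset\K$ and $\E_0\subset\F_0$ by direct comparison of the defining conditions. Since all the classes are defined by conditions on the universal cover $\widetilde X$, and all three classes are in particular closed under the passage to universal covers, I would reduce immediately to the simply-connected case and argue there.

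\emph{The inclusion $\F\subset\K$.} Suppose $X\in\F$, so $\widetilde X$ is of finite type with $\pi_i(\widetilde X)=0$ for all but finitely many $i$. A simply-connected space with only finitely many non-trivial homotopy groups has only finitely many non-trivial $k$-invariants, since the $k$-invariant $k_n\colon X_{n-1}\to K(\pi_n(\widetilde X),n+1)$ is null whenever $\pi_n(\widetilde X)=0$. Hence $X\in\K$. The identical argument, reading ``finitely generated abelian group'' as ``finitely generated $\Z_{(p)}$-module'' respectively ``finite-dimensional $\QQ$-vector space'', gives $\F_p\subset\K_p$ and $\F_0\subset\K_0$.

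\emph{The inclusion $\K\subset\PP$ (and local versions).} This is the heart of the lemma. Suppose $X\in\K$, so $\widetilde X$ is simply-connected, of finite type, and has finitely many non-trivial $k$-invariants, say $k_n=0$ for $n>N$. Then the classical Postnikov tower of $\widetilde X$ stabilises: for $n>N$ the map $X_n\to X_{n-1}$ is a weak equivalence, so $\widetilde X\simeq X_N$. Each stage $X_n\to X_{n-1}\to K(\pi_n(\widetilde X),n+1)$ is a principal fibration whose fibre $K(\pi_n(\widetilde X),n)$ is a simply-connected GEM, since $\pi_n(\widetilde X)$ is a finitely generated abelian group (hence a finite product of cyclic groups, and $K$ of a product is a product of $K$'s). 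Thus the truncated tower $X_N\to\cdots\to X_0$ is a finite generalised Postnikov tower for $\widetilde X$, giving $\widetilde X\in\PP$ by definition, and so $X\in\PP$. The $p$-local and rational versions run verbatim: a finitely generated $\Z_{(p)}$-module (resp.\ finite-dimensional $\QQ$-space) is again a finite direct sum of cyclic $\Z_{(p)}$-modules (resp.\ copies of $\QQ$), so each fibre is still a GEM, now in the $p$-local (resp.\ rational) category.

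\emph{The inclusion $\E_0\subset\F_0$.} If $X\in\E_0$ then $\widetilde X$ is a rational elliptic space, so by definition $\pi_*(\widetilde X)\otimes\QQ$ is finite-dimensional; in particular it is non-zero in only finitely many degrees and is of finite rational type, which is exactly the condition defining $\F_0$. \textbf{The main obstacle} is the step $\K\subset\PP$: the one genuine input is that the truncation of the Postnikov tower at the last non-trivial $k$-invariant really is a \emph{finite} generalised Postnikov tower, which requires observing both that the stages beyond $N$ collapse and that each Eilenberg-MacLane fibre is a genuine GEM in the relevant (integral, $p$-local, or rational) category. The remaining inclusions are immediate from the definitions.
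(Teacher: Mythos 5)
There is a genuine gap in your key step $\K\subset\PP$. You write that if $X\in\K$ with $k_n=0$ for $n>N$, then ``for $n>N$ the map $X_n\to X_{n-1}$ is a weak equivalence, so $\widetilde X\simeq X_N$.'' This conflates vanishing of the $k$-invariant with vanishing of the homotopy group. If $k_n\colon X_{n-1}\to K(\pi_n(\widetilde X),n+1)$ is null then the principal fibration splits, giving $X_n\simeq X_{n-1}\times K(\pi_n(\widetilde X),n)$ --- the map $X_n\to X_{n-1}$ is a weak equivalence only when $\pi_n(\widetilde X)=0$. A space in $\K$ may well have infinitely many non-trivial homotopy groups (this is exactly why $\K$ is a priori larger than $\F$), so the Postnikov tower does not stabilise and $\widetilde X\not\simeq X_N$ in general. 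As written, your argument only establishes $\F\subset\PP$, which you already get from $\F\subset\K$ once the inclusion $\K\subset\PP$ is actually proved.

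The repair is what the paper does: from the splittings above one gets a weak equivalence $\widetilde X\to X_N\times\prod_{n>N}K(\pi_n(\widetilde X),n)$, and one appends a single extra stage
$$X_N\times\prod_{n>N}K(\pi_n(\widetilde X),n)\to X_N\to X_{N-1}\to\cdots\to X_0,$$
where the new stage is a principal fibration with fibre the simply-connected GEM $\prod_{n>N}K(\pi_n(\widetilde X),n)$ (classified by the trivial map). This yields a finite type generalised Postnikov tower of length $N+1$ for $\widetilde X$, hence $X\in\PP$. Everything else in your proposal ($\F\subset\K$, $\E_0\subset\F_0$, the reduction to universal covers, and the observation that the local and rational cases are verbatim) is fine and matches the paper; the remark that $K(A,n)$ splits as a product of $K$'s of cyclic groups is unnecessary, since a single Eilenberg--MacLane space is already a GEM.
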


\begin{proof}
	We only prove the first inclusions because other cases are similar. Clearly $\F\subseteq \K$. Suppose $X\in \K$ and let
  $$\cdots\to X_n\to X_{n-1}\to\cdots\to X_0$$
  be the classical Postnikov tower for its universal cover $\widetilde{X}$. If the $k$-invariant $k_n\colon X_{n-1}\to K(\pi_n(X),n+1)$ is trivial, then $X_n\simeq X_{n-1}\times K(\pi_n(X),n)$.  Thus there is an integer $N$ such that the $k$-invariant $k_n$ is trivial for each $n>N$, implying there is a weak homotopy equivalence $\widetilde{X}\to X_N\times\prod_{n>N}K(\pi_n(X),n)$. Thus a tower
  $$X_N\times\prod_{n>N}K(\pi_n(X),n)\to X_N\to X_{N-1}\to\cdots\to X_0$$
  is a finite type generalised Postnikov tower of finite length for $\widetilde{X}$. Therefore $X\in\PP$, completing the proof.
\end{proof}

\begin{lemma}
	The class $\PP$ is closed under finite products.
	\label{lem-products}
\end{lemma}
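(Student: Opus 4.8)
The plan is to first reduce the statement to the generating class of $\PP$, and then to build an explicit product tower. Write $\mathcal{B}$ for the class of connected spaces whose universal covers admit finite type generalised Postnikov towers of finite length, so that by Definition \ref{Def-PP} the class $\PP$ is the smallest class containing $\mathcal{B}$ and closed under homotopy retracts. Since a homotopy retract of a homotopy retract is again a homotopy retract (the section and retraction maps compose), this smallest class is precisely the class of all homotopy retracts of members of $\mathcal{B}$. Hence if $X,Y\in\PP$ there are spaces $B_1,B_2\in\mathcal{B}$ of which $X$ and $Y$ are respectively homotopy retracts. Taking products of the corresponding sections and retractions shows that $X\times Y$ is a homotopy retract of $B_1\times B_2$, so it suffices to prove that $\mathcal{B}$ is closed under finite products.

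Let $B_1,B_2\in\mathcal{B}$. Since $\pi_1(B_1\times B_2)\cong\pi_1(B_1)\times\pi_1(B_2)$, the universal cover of $B_1\times B_2$ is $\widetilde{B_1}\times\widetilde{B_2}$, and I must produce a finite type generalised Postnikov tower of finite length for this product. Choose such towers $\cdots\to Y_n\to\cdots\to Y_0$ for $\widetilde{B_1}$ and $\cdots\to Z_n\to\cdots\to Z_0$ for $\widetilde{B_2}$, with classifying maps $k_n^Y\colon Y_{n-1}\to K_n^Y$ and $k_n^Z\colon Z_{n-1}\to K_n^Z$. After padding the shorter tower with trivial stages (taking $K_n=*$, so that the relevant space is unchanged), I may assume both towers have the same finite length $N$. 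I then propose the product tower $W_n\defeq Y_n\times Z_n$,
\[
  \cdots\to W_n\to W_{n-1}\to\cdots\to W_0,
\]
whose structure maps are the products of the given maps.

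It remains to check that this is a generalised Postnikov tower of the required kind. The base $W_0=Y_0\times Z_0$ is weakly contractible. For the principal fibrations, recall that $Y_n\to Y_{n-1}\xto{k_n^Y}K_n^Y$ exhibits $Y_n$ as the pullback of the path fibration over $K_n^Y$, and likewise for the $Z$-tower; taking products and using $P(K_n^Y\times K_n^Z)\simeq PK_n^Y\times PK_n^Z$ shows that $W_n\to W_{n-1}$ is the pullback of the path fibration over $K_n^Y\times K_n^Z$ along $k_n^Y\times k_n^Z$, hence a principal fibration. Its classifying space $K_n^Y\times K_n^Z$ is a product of GEMs, and is therefore itself a GEM; it is simply-connected and of finite type because each factor is. Finally, since homotopy limits commute with finite products, $\mathrm{holim}_n\,W_n\simeq(\mathrm{holim}_n\,Y_n)\times(\mathrm{holim}_n\,Z_n)$, and the product of the weak equivalences $\widetilde{B_1}\to\mathrm{holim}_n\,Y_n$ and $\widetilde{B_2}\to\mathrm{holim}_n\,Z_n$ is the required weak equivalence $\widetilde{B_1}\times\widetilde{B_2}\to\mathrm{holim}_n\,W_n$. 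This shows $B_1\times B_2\in\mathcal{B}$ and completes the argument.

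The verifications of the tower axioms are routine. The one structural point on which everything hinges is the observation that iterated homotopy retracts collapse to a single retract: this is what licenses replacing $\PP$ by the generating class $\mathcal{B}$ and reduces the whole problem to building a single product tower. Within that construction, the only step needing genuine care is the claim that a product of principal fibrations is again a principal fibration with GEM fibre, which is precisely where the closure of GEMs under products and the identity $P(K\times K')\simeq PK\times PK'$ are used.
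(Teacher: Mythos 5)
Your proof is correct and follows essentially the same route as the paper's: reduce to retracts of the generating class and observe that a product of spaces admitting finite type generalised Postnikov towers of finite length again admits one. The paper dismisses the product-tower construction with ``clearly''; you simply spell out that routine verification (padding to equal length, product classifying maps $k_n^Y\times k_n^Z$ into the GEM $K_n^Y\times K_n^Z$, and holim commuting with products), all of which is sound.
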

\begin{proof}
	Assume $X, Y\in\PP$, and let $\widetilde{X}$ and $\widetilde{Y}$ denote their respective universal covers. Then there are spaces $X_0$ and $Y_0$ admit finite type generalised Postnikov towers of finite length, and such that $\widetilde{X}$ and $\widetilde{Y}$ are homotopy retracts of $X_0$ and $Y_0$, respectively. Thus $\widetilde{X}\times\widetilde{Y}$ that is the universal cover of $X\times Y$ is a homotopy retract of $X_0\times Y_0$, and the latter clearly admits a finite type generalised Postnikov tower of finite length. Thus $X\times Y\in \PP$.
\end{proof}

The following properties of the classes $\F,\K$ are obvious, and so we omit the proof.

\begin{lemma}
  \label{F-K}
  Let $\C$ be either of $\F,\K$. Then the following hold:
  \begin{enumerate}
    \item $\C$ is closed under finite products;
    \item $\C$ is closed under homotopy retracts;
    \item If there is a homotopy fibration $F\to E\to B$ such that $B\in\C$ and $F\in\F$, then $E\in\C$.
  \end{enumerate}
\end{lemma}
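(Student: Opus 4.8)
The plan is to verify the three closure properties for both classes $\C=\F$ and $\C=\K$ by reducing everything to statements about universal covers, where the defining conditions live. Recall that membership in $\F$ (resp. $\K$) is a condition on the universal cover: finite type together with homotopy groups concentrated in finitely many dimensions (resp. finitely many non-trivial $k$-invariants). So the first observation I would record is that for each of the three operations I must control how passage to universal covers interacts with the operation, and then check the numerical finiteness condition.

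For part (1), closure under finite products, I would note that the universal cover of $X\times Y$ is $\widetilde{X}\times\widetilde{Y}$. The K\"unneth theorem shows that a product of finite type spaces is of finite type, and $\pi_n(\widetilde{X}\times\widetilde{Y})\cong\pi_n(\widetilde{X})\oplus\pi_n(\widetilde{Y})$, so if both factors have homotopy groups in finitely many dimensions then so does the product, giving closure for $\F$. For $\K$, the classical Postnikov tower of a product is (the pullback of) the product of the Postnikov towers, so the non-trivial $k$-invariants of $\widetilde{X}\times\widetilde{Y}$ are built from those of the two factors and their number stays finite. Part (2), closure under homotopy retracts, is the easiest: if $Y$ is a homotopy retract of $X$ then $\widetilde{Y}$ is a homotopy retract of $\widetilde{X}$, and both finite type and the vanishing of homotopy groups above a fixed degree are inherited by retracts, since a retract induces split injections on homotopy and homology. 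For $\K$ one argues that a retract cannot introduce new non-trivial $k$-invariants beyond the range where $X$ already has trivial ones.

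The substantive case is part (3), closure under fibrations $F\to E\to B$ with $B\in\C$ and $F\in\F$. Here the main obstacle is that the universal cover of $E$ need not be the total space of a fibration with fibre $\widetilde{F}$ in any naive way; one must be careful about the action of $\pi_1(B)$ and about what happens on universal covers. The approach I would take is to pass to the fibration of universal covers (or rather to analyse the long exact sequence of homotopy groups of the original fibration together with covering-space theory). Since $F\in\F$ its homotopy groups vanish above some dimension $N$, so the long exact sequence $\cdots\to\pi_n(F)\to\pi_n(E)\to\pi_n(B)\to\cdots$ shows that for $n>N+1$ the map $\pi_n(\widetilde{E})\to\pi_n(\widetilde{B})$ is an isomorphism, and for $n\le N+1$ the groups are controlled by an exact sequence whose outer terms are finitely generated; a finiteness (three-lemma type) argument then shows the homotopy groups of $\widetilde{E}$ are finitely generated and, for $\F$, vanish in high degrees because those of $\widetilde{B}$ do. The finite-type conclusion follows from the Serre spectral sequence of the fibration: with $F$ of finite type and $B$ of finite type, $E$ is of finite type. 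For the $\K$ case the key point is that $F\in\F$ contributes only finitely many dimensions of homotopy and hence only finitely many additional $k$-invariants, so the total count for $E$ remains finite when added to the finitely many coming from $B$. The delicate bookkeeping is making the spectral-sequence and exact-sequence estimates uniform enough to conclude finiteness in all relevant degrees simultaneously, and handling the $\pi_1$-action so that the universal-cover reformulation is legitimate.
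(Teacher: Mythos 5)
The paper gives no proof of this lemma at all --- it is dismissed as ``obvious'' --- so there is no argument to compare yours against; your plan is correct and supplies the details the authors omit. Two small points. First, your Serre spectral sequence remark in (3) rests on $F$ and $B$ themselves being of finite type, which membership in $\F$ does not guarantee (only their universal covers are); but this is harmless, since the conclusion you actually need is that $\widetilde{E}$ is of finite type, and for simply-connected spaces finite type is equivalent to finitely generated homotopy groups, which your long-exact-sequence argument already delivers ($\pi_n(E)$ for $n\ge 2$ is an extension of a subgroup of the finitely generated $\pi_n(B)$ by a quotient of the finitely generated $\pi_n(F)$). Second, the only genuinely non-formal clauses are the $\K$-statements in (2) and (3), where your justifications (``a retract cannot introduce new non-trivial $k$-invariants'', ``only finitely many additional $k$-invariants'') are heuristic as stated; both are cleanly settled by naturality of $k$-invariants. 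For (2), if $i\colon Y\to X$ and $r\colon X\to Y$ satisfy $r\circ i\simeq 1$, then $k_n^{\widetilde{Y}}=(r_*)_*\,i_{n-1}^*\,k_n^{\widetilde{X}}$, so $k_n^{\widetilde{X}}=0$ forces $k_n^{\widetilde{Y}}=0$. For (3), choose $N$ with $\pi_n(F)=0$ for $n>N$ and $M\ge N+1$ with $k_n^{\widetilde{B}}=0$ for $n>M$; then $\pi_n(\widetilde{E})\to\pi_n(\widetilde{B})$ is an isomorphism for $n>N+1$, and naturality applied to $\widetilde{E}\to\widetilde{B}$ gives $(f_*)_*k_n^{\widetilde{E}}=f_{n-1}^*k_n^{\widetilde{B}}=0$ with $(f_*)_*$ an isomorphism of coefficient systems, hence $k_n^{\widetilde{E}}=0$ for $n>M$. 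With these substitutions your outline closes completely.
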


Next, we need to show that if $F\to E\to B$ is a homotopy fibration, and $B, E\in \PP$, then $F$ also belongs to $\PP$. The following three lemmas will be needed to do so. Let $\hofib(f)$ denote the homotopy fibre of a map $f\colon X\to Y$.

\begin{lemma}
	\label{connectivity}
	Let   $$F\to E\xto{p} B,$$
	be a homotopy fibration of $(r-1)$-connected spaces, $r\geq 2$, and assunme that $B$ is a GEM.
	Let $f\colon F\to K(G,r)$ be any map that induces a surjection on $\pi_r$. Then there are a map $g\colon E\to K(H,r)$ and a homotopy fibration
	$$\mathrm{hofib}(f)\to\mathrm{hofib}(g)\to W$$
  such that $g$ induces a surjection on $\pi_r$ and $W$ is a GEM. Moreover, if $B$ is of finite type then so is $W$.
\end{lemma}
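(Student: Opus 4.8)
The plan is to work entirely in the lowest nonvanishing dimension, where everything is controlled linearly by $\pi_r$. Since $F$, $E$ and $B$ are all $(r-1)$-connected, the Hurewicz theorem together with universal coefficients gives, for $X\in\{F,E\}$, isomorphisms $[X,K(A,r)]\cong H^r(X;A)\cong\mathrm{Hom}(\pi_r(X),A)$. Thus any map out of $F$ or $E$ into a degree-$r$ Eilenberg--MacLane space is determined up to homotopy by its effect on $\pi_r$, and two such maps are homotopic precisely when they agree on $\pi_r$. This reduces the construction of $g$ and of the whole square to a question about homomorphisms of abelian groups.

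First I would build $H$, $\psi$ and $g$. The fibration gives an exact sequence $\pi_{r+1}(B)\xrightarrow{\partial}\pi_r(F)\xrightarrow{i_*}\pi_r(E)\xrightarrow{p_*}\pi_r(B)\to 0$, where $i\colon F\to E$ is the fibre inclusion. Let $H$ be the pushout of $G\xleftarrow{f_*}\pi_r(F)\xrightarrow{i_*}\pi_r(E)$, with structure maps $\psi_*\colon G\to H$ and $g_*\colon\pi_r(E)\to H$, and set $\psi=K(\psi_*,r)$ and $g=K(g_*,r)\circ\iota_E$, where $\iota_E\colon E\to K(\pi_r(E),r)$ is the fundamental class. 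Pushing out along the surjection $f_*$ keeps $g_*$ surjective, so $g$ induces a surjection on $\pi_r$. By the first paragraph the square with horizontal maps $i,\psi$ and vertical maps $f,g$ commutes up to homotopy, since $g_*i_*=\psi_*f_*$ holds by definition of the pushout.

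Next I would extract the fibration. The homotopy-commutative square induces a map $\hofib(f)\to\hofib(g)$ whose homotopy fibre $T$ is the total homotopy fibre of the square; computing it by taking the fibres of the vertical maps first gives $T\simeq\hofib\!\big(\Omega B\to\hofib(\psi)\big)$, where $\hofib(\psi)\simeq K(\ker\psi_*,r)\times K(\mathrm{coker}\,\psi_*,r-1)$ is a GEM. Once $T$ is known to be a GEM, it is an infinite loop space, the fibre sequence $T\to\hofib(f)\to\hofib(g)$ is principal, and rotating it one step produces the required fibration $\hofib(f)\to\hofib(g)\to W$ with $W\simeq BT$, a GEM. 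All homotopy groups of $T$ (hence of $W$) are subquotients of those of $B$, so $W$ is of finite type whenever $B$ is.

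The main obstacle, and the real content, is therefore to show that $T$ is a GEM. Here the connectivity hypothesis on $B$ is decisive. The map $\Omega B\to\hofib(\psi)$ is a loop map: its composite to $K(G,r)$ is $f\circ\partial$, which by the transgression identity is the looping of the classifying map $B\to K(G,r+1)$ representing $\tau(f)\in H^{r+1}(B;G)$. Being a loop map, it is primitive, so it has no decomposable (cup-product) components; and since $\Omega B$ is $(r-2)$-connected while $\hofib(\psi)$ lives in dimensions $r-1$ and $r$, no primary operation of degree $\ge 2$ can occur, as that would require a nonzero class in dimension $\le r-2$. Hence every component of $\Omega B\to\hofib(\psi)$ is a cohomology operation of degree at most one, i.e. a homomorphism (degree $0$) or a Bockstein-type operation (degree $1$). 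The homotopy fibre of a map between GEMs all of whose components have degree $\le 1$ is again a GEM: degree-zero components contribute kernels and cokernels as Eilenberg--MacLane factors, while a degree-one component only modifies the extension between two adjacent homotopy groups (for example merging $\mathbb Z/p$ and $\mathbb Z/p$ into $\mathbb Z/p^2$), and in neither case does a nonzero Postnikov $k$-invariant arise, such invariants being a degree-two phenomenon that is absent here. This can be organised as an induction on the (two-step) Postnikov filtration of $\Omega B$, peeling off the contributions of $\pi_r(B)$ and $\pi_{r+1}(B)$ in turn. Granting this, $T$ and hence $W\simeq BT$ are GEMs of finite type, which completes the argument.
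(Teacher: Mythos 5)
Your construction of $g$ from the pushout $H=G\oplus_{\pi_r(F)}\pi_r(E)$, and the identification of the total homotopy fibre $T\simeq\hofib(\Omega B\to\hofib(\psi))$ of the square, are both sound, and they recover the same group $H$ (an extension of $\pi_r(B)$ by $\pi_r(F)/(\Ker i_*+\Ker f_*)$) that the paper obtains. The gap is the final step: from the fibration sequence $T\to\hofib(f)\to\hofib(g)$ you cannot ``rotate one step'' to obtain $\hofib(f)\to\hofib(g)\to BT$ merely because $T$ is a GEM, hence an infinite loop space. Fibration sequences extend automatically to the \emph{left}, never to the right; extending to the right is precisely the assertion that the fibration is principal, i.e.\ induced from a map $\hofib(g)\to BT$, and this fails in general even for GEM fibres over simply connected bases. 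For example, let $B'$ be the two-stage Postnikov piece with $\pi_2=\pi_3=\Z/2$ and $k$-invariant $\mathrm{Sq}^2\iota_2\neq 0$: then $\Omega B'\simeq K(\Z/2,1)\times K(\Z/2,2)$ is a GEM, but the path fibration $\Omega B'\to PB'\to B'$ cannot be principal, since otherwise $B'\simeq B(\Omega B')$ would itself be a GEM. Producing the map from $\hofib(g)$ to a GEM is exactly the content of the lemma, and your argument assumes it rather than proves it. The paper avoids the problem by exploiting the product decomposition $B=\prod_{n\ge r}K(A_n,n)$ from the start: the map to $W=\prod_{n\ge r+1}K(A_n',n)$ is built directly as a correction of the composite $E\xto{p} B\to\prod_{n\ge r+1}K(A_n,n)$ restricted to $\hofib(h')$, and the only delooping performed is of a fibration with fibre a \emph{single} Eilenberg--MacLane space $K(H,r-1)$ over the simply connected base $E$, where principality is standard.

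Secondarily, your argument that $T$ is a GEM is only a sketch: the map $\hofib(i)\to\hofib(\psi)$ induced on horizontal fibres of a homotopy commutative square is not obviously a loop map, so the appeal to primitivity needs justification (this matters for $r=2$, where $H_2(K(A_2,1))$ contains decomposables), and the claim that the homotopy fibre of a map between GEMs whose components are operations of degree at most one is again a GEM, while true (for instance via the splitting of $H\Z$-module spectra), is asserted rather than proved. These points are fillable; the delooping gap above is the one that cannot be repaired without essentially carrying out the paper's direct construction of the map to $W$.
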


\begin{proof}
	Write $B=\prod_{n\ge r}K(A_n,n)$, and let $h$ denote the composite
	\[E\xto{p} B = \prod_{n\ge r}K(A_n,n)\xto{\proj} K(A_r,r).\]
	Then $h$ is surjective in $\pi_r$, since $F$ is $(r-1)$-connected, and one easily verifies that the  sequence
	\[F\xto{e} \hofib(h)\xto{} \prod_{n\ge r+1}K(A_n,n)\]
	is a homotopy fibration.
	In particular, $\mathrm{hofib}(h)$ is $(r-1)$-connected and there is an exact sequence of abelian groups:
	\begin{equation}
	\label{exact}
	A_{r+1}\to\pi_r(F)\xrightarrow{e_*}\pi_r(\mathrm{hofib}(h))\to 0.
	\end{equation}

	Let $I$ and $J$ denote $\Ker(e_*)$ and $\Ker(f_*)$ respectively. Expand the diagram
	\[G\xleftarrow{f_*} \pi_r(F)\xto{e_*}\pi_r(\hofib(h))\]
	into a commutative diagram of  abelian groups with exact rows and columns:
	$$\xymatrix{&0\ar[d]&0\ar[d]&0\ar[d]\\
		0\ar[r]&I\cap J\ar[r]\ar[d]&J\ar[r]\ar[d]&e_\ast(J)\ar[r]\ar[d]&0\\
		0\ar[r]&I\ar[r]\ar[d]&\pi_r(F)\ar[r]^(.4){e_\ast}\ar[d]^{f_\ast}&\pi_r(\mathrm{hofib}(h))\ar[r]\ar[d]&0\\
		0\ar[r]&I/I\cap J\ar[r]\ar[d]&G\ar[r]\ar[d]&K\ar[r]\ar[d]&0\\
		&0&0&0}$$
	In particular,
	by \eqref{exact}, $I$ is a quotient of $A_{r+1}$. Then since both $F$ and $\mathrm{hofib}(h)$ are $(r-1)$-connected, one gets a homotopy commutative diagram whose rows are homotopy fibrations
	$$\xymatrix{
		F\ar[r]^(.45)e\ar[d]\ar@/_4.0pc/[dd]_f&\mathrm{hofib}(h)\ar[d]^\alpha\ar[r]&\prod_{n\ge r+1}K(A_n,n)\ar[d]^q\\
		K(\pi_r(F),r)\ar[r]\ar[d]&K(\pi_r(\mathrm{hofib}(h)),r)\ar[r]\ar[d]^\beta&K(I,r+1)\ar[d]\\
		K(G,r)\ar[r]&K(K,r)\ar[r]&K(I/I\cap J,r+1)
	}$$
	where $q$ is the composite
	\[\prod_{n\ge r+1}K(A_n,n)\xto{\proj} K(A_{r+1},r+1)\to K(I,r+1).\]

	Now, set $h' = \beta\circ\alpha$. Then there is a homotopy fibration
	$$\mathrm{hofib}(f)\to\mathrm{hofib}(h')\to\prod_{n\ge r+1}K(A_n',n)\eqqcolon W,$$
	where $A_{r+1}'=\Ker(A_{r+1}\to I\to I/I\cap J)$ and $A_n'= A_n$ for $n\ge r+2$. Let $\gamma$ denote the composite  $\mathrm{hofib}(h')\to\mathrm{hofib}(h)\to E$. Then, by a simple diagram chase,  $\hofib(\gamma) \simeq K(H,r-1)$, where $H$ is an extension of $A_r$ by $K$.  Since $E$ is $(r-1)$-connected, the homotopy fibration $K(H,r-1)\to\mathrm{hofib}(h')\xto{\gamma} E$ is principal. Thus, by delooping that fibration, one obtains a map $g\colon E\to K(H,r)$. A diagram chase  now shows that $g$  induces a surjection on $\pi_r$,  and that $\mathrm{hofib}(g)\simeq\mathrm{hofib}(h')$.
	Thus we obtain a homotopy fibration
	\[\hofib(f)\to \hofib(g)\to W\]
	as claimed. By construction, $W$ is clearly of finite type whenever so is $B$.
\end{proof}

\begin{lemma}
	\label{GPT}
	If $X$ is $(r-1)$-connected for $r\ge 2$ that admits a  generalised Postnikov tower of length $l$ (and finite type), then $X$ has a generalised Postnikov tower
	$$X\simeq X_l\to\cdots\to X_n\to X_{n-1}\to\cdots\to X_0$$
	of length $l$ (and finite type), such that for each $1\le n\le l$ there is a homotopy fibration $X_n\to X_{n-1}\to K_n$, where $K_n$ is an $r$-connected GEM.
\end{lemma}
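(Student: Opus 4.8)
The plan is to raise the connectivity of the GEM fibres one degree at a time, using Lemma \ref{connectivity} as the engine, applied once down the whole tower at each stage. First I would normalise the given tower. Since $r\ge 2$ the space $X\simeq X_l$ is simply-connected, and in each defining fibration $X_n\to X_{n-1}\to K_n$ the base is a simply-connected GEM, so $\pi_1(X_n)\to\pi_1(X_{n-1})$ is onto; descending from $\pi_1(X_l)=0$ forces $\pi_1(X_n)=0$ for all $n$. Hence every space in the tower is simply-connected, which is the input connectivity Lemma \ref{connectivity} requires.

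It then suffices to prove the following one-step statement and iterate it for $s=2,3,\dots,r$: \emph{if $X$ has a length-$l$ (finite type) tower all of whose spaces are $(s-1)$-connected, then it has a length-$l$ (finite type) tower all of whose GEM fibres are $s$-connected, and all of whose spaces are $s$-connected when $s\le r-1$.} Granting this, the original tower has simply-connected (that is, $1$-connected) spaces, and successive application for $s=2,\dots,r$ produces a length-$l$ tower with $r$-connected GEM fibres; the space-connectivity output of each step ($s$-connected for $s\le r-1$) is exactly the hypothesis needed for the next, and the final pass $s=r$ delivers the $r$-connected fibres while the top space stays $X$.

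To prove the one-step statement I would sweep Lemma \ref{connectivity} downward. Note first that if the spaces are $(s-1)$-connected then each $K_n$ is automatically $(s-1)$-connected, so the lemma with parameter $s$ applies to every fibration $X_n\to X_{n-1}\to K_n$. Start at the top with the null map $f_l\colon X_l\to K(0,s)\simeq *$, which is surjective on $\pi_s$ and keeps the top space equal to $X$; for $s\le r-1$ this is forced since $\pi_s(X)=0$, while for $s=r$ it is a deliberate choice that preserves $\pi_r(X)$. Applying the lemma to $X_l\to X_{l-1}\to K_l$ with fibre map $f_l$ yields $g_l\colon X_{l-1}\to K(H_l,s)$, surjective on $\pi_s$, together with a homotopy fibration $\hofib(f_l)\to\hofib(g_l)\to W_l$ with $W_l$ an $s$-connected GEM, of finite type since $K_l$ is. Setting $f_{l-1}\defeq g_l$ and repeating into $X_{l-1}\to X_{l-2}\to K_{l-1}$, and so on down to the bottom—where $\pi_s(X_0)=0$ forces $H_1=0$ and hence $\hofib(f_0)\simeq *$—the homotopy fibrations assemble into a tower
\[
X=\hofib(f_l)\to\hofib(f_{l-1})\to\cdots\to\hofib(f_0)\simeq *
\]
of length $l$ whose $n$-th stage is the homotopy fibration $\hofib(f_n)\to\hofib(f_{n-1})\to W_n$ with $W_n$ an $s$-connected GEM. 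A descending induction along the fibration long exact sequences, using that $\hofib(f_l)=X$ is $(r-1)$-connected and each $W_n$ is $s$-connected, shows every $\hofib(f_n)$ is $(s-1)$-connected, and $s$-connected once $s\le r-1$; finite type is preserved throughout.

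The main obstacle is the coherence of this sweep, and it is precisely what Lemma \ref{connectivity} is built to supply: it converts a degree-$s$ invariant on the fibre $X_n$ into one on the total space $X_{n-1}$, and the identity $\hofib(g_n)=\hofib(f_{n-1})$—which holds by our \emph{definition} $f_{n-1}\defeq g_n$—is what glues the separate fibrations $\hofib(f_n)\to\hofib(f_{n-1})\to W_n$ into a single tower of the correct length. Two points will need care: that the new base $W_n$ genuinely becomes $s$-connected rather than merely $(s-1)$-connected, which rests on the explicit description of $W$ in the proof of Lemma \ref{connectivity}; and that beginning the final ($s=r$) sweep with the null map at the top keeps the homotopy inverse limit equal to $X$, and not its $r$-connected cover, so that $\pi_r(X)$ survives.
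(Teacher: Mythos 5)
Your argument is correct, and its engine is identical to the paper's: a downward sweep of Lemma \ref{connectivity} along the tower, starting from the constant map $f_l\colon X_l\to *$ at the top, setting $f_{n-1}\defeq g_n$ at each stage so that the fibrations $\hofib(f_n)\to\hofib(f_{n-1})\to W_n$ glue into a tower of the same length, and reading off the extra connectivity of $W_n$ from the explicit description $W=\prod_{n\ge r+1}K(A_n',n)$ inside the proof of that lemma. The one genuine difference is how the connectivity hypotheses of Lemma \ref{connectivity} are arranged before the sweep: the paper invokes \cite[Lemma 4.2]{IK} once to replace the given tower by one in which every space $Y_n$ and every GEM $L_n$ is already $(r-1)$-connected, and then performs a single sweep at level $r$; you instead bootstrap, running the sweep $r-1$ times at levels $s=2,\dots,r$ and using the long exact sequences of the new fibrations (descending from $\hofib(f_l)=X$) to show each pass raises the connectivity of both the spaces and the GEM fibres by one. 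Your route is longer but self-contained, trading the external normalisation lemma for iteration; your supporting observations all check out (simple connectivity propagates down the tower because each $\pi_1(X_{n-1})$ is a quotient of $\pi_1(X_n)$, the $(s-1)$-connectivity of $K_n$ follows from that of $X_n$ and $X_{n-1}$ via the fibration sequence, and $H_1=0$ at the bottom because $X_0$ is weakly contractible), so Lemma \ref{connectivity} is legitimately applicable at every stage of every pass.
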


\begin{proof}
	Suppose $X$ has a generalised Postnikov tower:
	$$X\simeq Y_l\to\cdots\to Y_n\to Y_{n-1}\to\cdots\to Y_0$$
	of length $l$. By \cite[Lemma 4.2]{IK}, we may assume that for each $1\le n\le l$, $Y_n$ is $(r-1)$-connected and there is a homotopy fibration $Y_n\to Y_{n-1}\to L_n$, where $L_n$ is an $(r-1)$-connected GEM. Let $f_l\colon Y_l\to*$ be the constant map. Lemma \ref{connectivity} applied to $f_l$ and the homotopy fibration $Y_l\to Y_{l-1}\to L_l$ yields a map $f_{l-1}\colon Y_{l-1}\to K(G_{l-1},r)$ which is surjective in $\pi_r$ and a homotopy fibration
	\[X\simeq Y_l=\mathrm{hofib}(f_l)\to\mathrm{hofib}(f_{l-1})\to K_l,\]
	where $K_l$ is an $r$-connected GEM. Set $X_l = X$ and $X_{l-1} = \hofib(f_{l-1})$. Next consider the homotopy fibration $Y_{l-1}\to Y_{l-2}\to L_{l-1}$ and the map $f_{l-1}$. By Lemma  \ref{connectivity} again, one obtains a map $f_{l-2}\colon Y_{l-2}\to K(G_{l-2},r)$ and a homotopy fibration
	\[X_{l-1} \to\hofib(f_{l-2}) \to K_{l-1},\]
	where $K_{l-1}$is an $r$-connected GEM. Set $X_{l-2} = \hofib(f_{l-2})$.

	By  induction, for $n\le l$ one has  a map $f_n\colon Y_n\to K(G_n,r)$ that is surjective in $\pi_r$, with $X_n= \hofib(f_n)$. Applying Lemma \ref{connectivity} to $f_n$ and the fibration $Y_n\to Y_{n-1}\to L_{n-1}$, we obtain a map $f_{n-1}\colon Y_{n-1}\to K(G_{n-1}, r)$, and a homotopy fibration
	\[X_n\to X_{n-1}\to K_n,\] where $X_{n-1} = \hofib(f_{n-1})$ and $K_n$ is an $r$-connected GEM. Since $Y_0$ is weakly contractible and $X$ is $(r-1)$-connected, $Y_1$ is an $(r-1)$-connected GEM, and it follows that $X_1$ is also an $(r-1)$-connected GEM. Hence taking $X_0 =*$ yields the desired Postnikov tower for $X$. Finite type is clearly preserved throughout this construction.
\end{proof}

A sequence $X_1\to X_2\to X_3$ is said to be a homotopy retract of a sequence $Y_1\to Y_2\to Y_3$ if there is a homotopy commutative diagram
$$\xymatrix{X_1\ar[r]^{i_1}\ar[d]&Y_1\ar[r]^{r_1}\ar[d]&X_1\ar[d]\\
X_2\ar[r]^{i_2}\ar[d]&Y_2\ar[r]^{r_2}\ar[d]&X_2\ar[d]\\
X_1\ar[r]^{i_3}&Y_3\ar[r]^{r_3}&X_3}$$
such that $r_k\circ i_k\simeq 1$ for each $k$.

\begin{lemma}
  \label{retract fibration}
  Let $X$ be a homotopy retract of $X_0$. Then the following statements hold:
  \begin{enumerate}
    \item any homotopy fibration $V\to Y\xrightarrow{p}X$ is a homotopy retract of some homotopy fibration $V_0\to Y\xrightarrow{p_0} X_0$;
    \item any homotopy fibration $W\to X\xrightarrow{q}Z$ is a homotopy retract of some homotopy fibration $W_0\to X_0\xrightarrow{q_0}Z$.
  \end{enumerate}
\end{lemma}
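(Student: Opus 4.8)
The plan is to produce the auxiliary fibrations over $X_0$ by composing $p$ (resp.\ $q$) with the retraction data, and then to extract the retract diagram from the homotopy functoriality of the homotopy fibre. I will use the following standard facts. A homotopy-commutative square
$$\xymatrix{A\ar[r]^u\ar[d]_f & A'\ar[d]^{f'}\\ B\ar[r]_v & B'}$$
together with a chosen homotopy $f'\circ u\simeq v\circ f$ induces a map $\hofib(f)\to\hofib(f')$ covering $u$, well defined up to homotopy; the identity square induces the identity; and vertically stacking two squares induces, up to homotopy, the composite of the two induced maps. Let $i\colon X\to X_0$ and $r\colon X_0\to X$ be the maps witnessing that $X$ is a homotopy retract of $X_0$, so that $r\circ i\simeq 1_X$.

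For part (1), I would set $p_0\defeq i\circ p\colon Y\to X_0$ and $V_0\defeq\hofib(p_0)$, giving a homotopy fibration $V_0\to Y\xto{p_0}X_0$. The strictly commutative square with top edge $1_Y$, bottom edge $i$, and verticals $p$ and $p_0$ then induces $i_1\colon V\to V_0$, while the square with top edge $1_Y$, bottom edge $r$, and verticals $p_0$ and $p$ --- which commutes up to homotopy since $r\circ p_0=(r\circ i)\circ p\simeq p$ --- induces $r_1\colon V_0\to V$. Taking $i_2=r_2=1_Y$ and $i_3=i$, $r_3=r$, these maps assemble into the retract diagram of the definition: the two upper squares commute because $i_1$ and $r_1$ cover $1_Y$, and the two lower squares are exactly the two squares just used. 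It then remains to check $r_k\circ i_k\simeq 1$ for each $k$: for $k=2,3$ this is $1_Y$ and $r\circ i\simeq 1_X$, while for $k=1$, stacking the two defining squares identifies $r_1\circ i_1$ up to homotopy with the map induced by the square with top edge $1_Y$, bottom edge $r\circ i$, and verticals $p$, $p$; since $r\circ i\simeq 1_X$ this square is homotopic to the identity square on $p$, whose induced map is $1_V$, so $r_1\circ i_1\simeq 1_V$.

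Part (2) is formally dual. I would set $q_0\defeq q\circ r\colon X_0\to Z$ and $W_0\defeq\hofib(q_0)$, take $i_2=i$, $r_2=r$, $i_3=r_3=1_Z$, and run the same argument with source and target interchanged, now using $q\circ(r\circ i)\simeq q$ in place of $(r\circ i)\circ p\simeq p$ to obtain $i_1\colon W\to W_0$ and $r_1\colon W_0\to W$ with $r_1\circ i_1\simeq 1_W$.

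The only real obstacle is the homotopy functoriality of $\hofib$ invoked above --- concretely, that a square homotopic to the identity square induces a map homotopic to the identity, and that stacking of squares corresponds to composition of induced maps up to homotopy. I would justify these using the path-space model $\hofib(f)=\{(a,\gamma)\mid\gamma(0)=f(a),\ \gamma(1)=*\}$, in which a chosen filling homotopy for a square prescribes, by concatenation of paths, the formula for the induced map; the homotopies relating squares then yield explicit homotopies between the corresponding induced maps. This bookkeeping is routine once the model is fixed, and it is the step requiring the most care.
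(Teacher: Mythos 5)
Your proposal is correct and follows essentially the same route as the paper: define $p_0=i\circ p$ and $q_0=q\circ r$, form the evident homotopy commutative ladders over $1_Y$ (resp.\ under $1_Z$), and take homotopy fibres. The paper leaves the homotopy functoriality of $\mathrm{hofib}$ implicit, whereas you spell it out; that is the only difference.
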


\begin{proof}
  By definition, there are maps $i\colon X\to X_0$ and $r\colon X_0\to X$ such that $r\circ i\simeq 1_X$. Let $p_0=i\circ p\colon Y\to X_0$ and $q_0=q\circ r\colon X_0\to Z$. Then there are homotopy commutative diagrams
  $$\xymatrix{Y\ar@{=}[r]\ar[d]^p&Y\ar@{=}[r]\ar[d]^{\bar{p}}&Y\ar[d]^p\\
  X\ar[r]^i& X_0\ar[r]^r&X}
  \qquad
  \xymatrix{X\ar[r]^i\ar[d]^q& X_0\ar[r]^r\ar[d]^{\bar{q}}&X\ar[d]^q\\
  Z\ar@{=}[r]&Z\ar@{=}[r]&Z}$$
  Thus taking homotopy fibres completes the proof.
\end{proof}

The following elementary proposition is a refinement of \cite[Proposition 4.4]{IK} and is useful for our analysis.

\begin{proposition}
  \label{P}
  Let $F\to E\to B$ be a homotopy fibration of connected spaces, and assume  $E$ and $B$ are in $\PP$. Then so is $F$.
\end{proposition}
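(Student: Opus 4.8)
The plan is to reduce the statement, in two stages, to a claim about the homotopy fibre of a map between simply connected spaces carrying honest finite type generalised Postnikov towers of finite length, and then to construct such a tower for the fibre by descending the tower of the base and applying Lemma \ref{connectivity}, in direct imitation of the proof of Lemma \ref{GPT}.

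First I would eliminate the fundamental groups. Since $\widetilde{E}$ is simply connected, the composite $\widetilde{E}\to E\xto{} B$ lifts to a map $\tilde p\colon\widetilde{E}\to\widetilde{B}$, and because the composite $\widetilde{F}\to F\to E\xto{\tilde p\circ(\,\cdot\,)}\widetilde{B}$ covers a null-homotopic map into a simply connected space, the fibre inclusion lifts to a map $\widetilde{F}\to\hofib(\tilde p)$. Comparing the long exact sequences of the two fibrations $F\to E\to B$ and $\hofib(\tilde p)\to\widetilde{E}\to\widetilde{B}$ by the five lemma --- the covering maps being isomorphisms on $\pi_n$ for $n\ge 2$ --- shows this lift is an isomorphism on $\pi_n$ for $n\ge 2$, so $\widetilde{F}\simeq\widetilde{\hofib(\tilde p)}$. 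As membership in $\PP$ depends only on the universal cover, and $\widetilde{E},\widetilde{B}\in\PP$ because $E,B\in\PP$, it suffices to treat $\hofib(\tilde p)\to\widetilde{E}\to\widetilde{B}$ with $\widetilde{E},\widetilde{B}$ simply connected and in $\PP$.

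Next I would pass from membership in $\PP$ to honest towers. By definition $\widetilde{E}$ and $\widetilde{B}$ are homotopy retracts of simply connected spaces $E_0$ and $B_0$ admitting finite type generalised Postnikov towers of finite length. Applying Lemma \ref{retract fibration} first with $\widetilde{B}$ a retract of $B_0$ (part (1)) and then with $\widetilde{E}$ a retract of $E_0$ (part (2)), and composing retracts, the fibre $\hofib(\tilde p)$ becomes a homotopy retract of $\hofib(E_0\to B_0)$. Since $\PP$ is closed under homotopy retracts, it now suffices to show $\hofib(E_0\to B_0)\in\PP$ when $E_0,B_0$ are simply connected and carry finite type generalised Postnikov towers of finite length. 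For the core of the argument I would induct on the length $l$ of the tower of $B_0$: when $l=0$ the base is contractible and the fibre is $E_0\in\PP$, while for $l\ge 1$ the top fibration $B_0\to B_0'\to K_l$ of the tower (with $B_0'$ of tower-length $l-1$ and $K_l$ a simply connected finite type GEM) gives, via the standard fibration of homotopy fibres of the composite $E_0\to B_0\to B_0'$, a fibration $\hofib(E_0\to B_0)\to\hofib(E_0\to B_0')\to\Omega K_l$. By induction $V:=\hofib(E_0\to B_0')\in\PP$, and $\Omega K_l$ is a finite type GEM, so it remains to handle a GEM base: if $V\in\PP$ and $M$ is a finite type GEM, then $\hofib(V\to M)\in\PP$. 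Reducing $V$ to an honest tower once more by Lemma \ref{retract fibration}, I may assume $V$ is a simply connected finite type polyGEM.

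For this final GEM-base case, the fibration $\hofib(\phi)\to V\xto{\phi} M$ has $M$ a GEM, which is exactly the setting of Lemma \ref{connectivity}. After using Lemma \ref{GPT} to arrange the GEM fibres of the tower of $V$ to be suitably highly connected, I would apply Lemma \ref{connectivity} once per stage to the bottom Postnikov map of the fibre, at each step transferring a map off the fibre onto the total space, peeling one stage off the tower of $V$, and producing one highly connected finite type GEM fibre for the tower of $\hofib(\phi)$; this terminates after finitely many steps since the tower of $V$ has finite length, and finite type is preserved because $M$ is finite type. The main obstacle is twofold. First, the bookkeeping of fundamental groups must be kept honest throughout, since the intermediate fibres are in general only nilpotent with abelian $\pi_1$, forcing one to pass consistently to universal covers as in the first reduction. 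Second, in the GEM-base case one must verify that each application of Lemma \ref{connectivity} genuinely shortens the tower of $V$ by one stage, and that the resulting GEMs assemble into a bona fide generalised Postnikov tower of $\hofib(\phi)$ of finite length --- this is the delicate analogue of the inductive descent carried out in the proof of Lemma \ref{GPT}, and is where the real work lies.
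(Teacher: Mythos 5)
Your overall skeleton matches the paper's: pass to universal covers, use Lemma \ref{retract fibration} to replace retracts by honest towers, and descend the tower of the base one stage at a time via the fibration $\hofib(E_0\to B_0)\to\hofib(E_0\to B_0')\to\Omega K_l$ of homotopy fibres; unwound, your induction produces exactly the paper's tower $\widetilde{F}\simeq F_l\to\cdots\to F_1\to E_k\to\cdots\to E_0$ with $F_i=\hofib(\widetilde{E}\to B_i)$. However, there are two genuine gaps. First, your final ``GEM-base case'' misuses Lemma \ref{connectivity}. That lemma requires all three spaces in the fibration to be $(r-1)$-connected with $r\ge 2$, and takes as input a map $f\colon F\to K(G,r)$ to a single Eilenberg-MacLane space that is surjective on $\pi_r$; its output is a map $g$ on the total space $E$ together with a fibration $\hofib(f)\to\hofib(g)\to W$. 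It does not interact with the tower of $E$ at all, so it cannot ``peel one stage off the tower of $V$'', and it does not produce a tower stage for $\hofib(\phi)$ (the space $\hofib(f)$ is a connected cover of $\hofib(\phi)$, not $\hofib(\phi)$ itself). Moreover $\phi\colon V\to M$ is a map to a GEM, not to a single $K(G,r)$, and $\hofib(\phi)$ need not be simply connected, so the lemma does not even apply. No such machinery is needed: once $M=\Omega K_l$ is a simply connected GEM, the single fibration $\hofib(\phi)\to V\to M$ is itself a legitimate tower stage and can simply be prepended to the tower of $V$. But ensuring $M$ is simply connected requires applying Lemma \ref{GPT} to the tower of the base $B_0$, making its GEMs $2$-connected so that their loop spaces are simply connected; applying it to the tower of $V$, as you propose, does not help with this.

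Second, the fundamental group issue you flag as an obstacle is never resolved, and it cannot be deferred. Your reduction replaces $\widetilde{F}$ by $\hofib(\tilde p)$, whose fundamental group is the (abelian, generally nontrivial) image of $\partial\colon\pi_2(B)\to\pi_1(F)$, and the intermediate fibres $\hofib(E_0\to B_i)$ likewise have nontrivial abelian $\pi_1$. Even if you build a finite tower for $\hofib(E_0\to B_0)$, membership in $\PP$ requires a finite tower for its universal cover, and passing from one to the other is not formal: the natural attempt, taking the fibre of the map to $K(\pi_1,1)$, is a fibration over a non-simply-connected base, and invoking the proposition itself there would be circular. The paper closes this loop before the tower construction begins: it sets $G=\Ima(\partial\colon\pi_2(B)\to\pi_1(F))$ and replaces $\widetilde{B}$ by $\widehat{B}=\hofib(\widetilde{B}\to K(G,2))$, which still has a finite tower (one extra stage) and has the property that the fibre of $\widetilde{E}\to\widehat{B}$ is exactly the simply connected space $\widetilde{F}$. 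With that modification your concatenation argument goes through, and Lemma \ref{connectivity} is not needed in this proof except implicitly through Lemma \ref{GPT}.
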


\begin{proof}
  For any connected space $X$, let  $\widetilde{X}$ denote its universal cover. By Lemma \ref{retract fibration}, we may assume that $\widetilde{E},\widetilde{B}$ themselves have finite type generalised Postnikov towers of finite length. Consider the homotopy exact sequence:
  $$\cdots\to\pi_{n+1}(B)\xrightarrow{\partial}\pi_n(F)\to\pi_n(E)\to\pi_n(B)\to\cdots$$
  Let $G=\mathrm{Im}\{\partial\colon\pi_2(B)\to\pi_1(F)\}$. Then $G$ is an abelian group and there is a map $\widetilde{B}\to K(G,2)$ which induces $\partial\colon\pi_2(B)\to\pi_1(F)$ on $\pi_2$. Let $\widehat{B}$ be the homotopy fibre of this map. Then $\widehat{B}$ has a generalised Postnikov tower of finite length, and there is a homotopy fibration:
  $$\widetilde{F}\to\widetilde{E}\to\widehat{B}$$

  Let $\widetilde{E}\simeq E_k\to E_{k-1}\to\cdots\to E_0$ and $\widehat{B}\simeq B_l\to B_{l-1}\to\cdots\to B_0$ be finite type generalised Postnikov towers for $\tilde{E}$ and $\widehat{B}$.  By Lemma \ref{GPT}, we may assume that $B_n$ is simply-connected for each $n$ and there is a homotopy fibration $B_n\to B_{n-1}\to K_n$, where $K_n$ is a 2-connected GEM of finite type. For $i\le l$, let $F_i$ be the homotopy fibre of the composite
	$$\widetilde{E}\to\widehat{B}\simeq B_l\to B_{l-1}\to\cdots\to B_i.$$
	Then one gets a sequence of maps
	$$\widetilde{F}\simeq F_l\to F_{l-1}\to\cdots\to F_1\to E_k\to E_{k-1}\to\cdots\to E_0.$$
  We show that this sequence is a finite type generalised Postnikov tower for $\widetilde{F}$. By definition, there is a homotopy fibration $F_1\to E_k\to B_1$ where $B_1$ is a simply-connected GEM. Thus it remains to show that there is a homotopy fibration $F_i\to F_{i-1}\to L_i$ for each $i\le l$ where $L_i$ is a simply-connected GEM of finite type. By definition, there is a homotopy commutative diagram whose rows and columns are homotopy fibrations
  $$\xymatrix{F_i\ar[r]\ar[d]&E_k\ar[r]\ar@{=}[d]&B_i\ar[d]\\
  F_{i-1}\ar[r]\ar[d]&E_k\ar[r]\ar[d]&B_{i-1}\ar[d]\\
  \Omega K_i\ar[r]&\ast\ar[r]&K_i}$$
  for each $i$. Since $\Omega K_i$ is a simply-connected GEM of finite type, the left column of the diagram gives the required fibration, and completes the proof.
\end{proof}

\begin{remark}
  Let $\overline{\PP}$ denote the minimal class that contains all spaces, whose universal covers admit generalised Postnikov towers of finite length (not necessarily of finite type), and that is closed under homotopy retracts. Then $\PP\subseteq\overline{\PP}$, and it is easy to see that Lemma \ref{lem-products} and Proposition \ref{P} hold with $\PP$ replaced by $\overline{\PP}$.
\end{remark}

Let $p$ be a prime number or zero. Recall that if we require $K_n$ to be a simply-connected $p$-local GEM, we obtain the notion of a \hadgesh{$p$-local generalised Postnikov tower}. If each $K_n$ is, of finite $p$-local type, then we say that the $p$-local generalised Postnikov tower is  of finite $p$-local type. Note that Lemma \ref{lem-products} and Proposition \ref{P} hold when $\PP$ is replaced by $\PP_p$.

The following useful lemma is proved in \cite[Theorems 2.5 and 3.1]{IK}, where the assumption that generalised Postnikov towers are of finite type is essential.

\begin{lemma}
  \label{GPT local}
  \begin{enumerate}
    \item A space of finite type has a generalised Postnikov tower of finite type if and only if it is nilpotent.
    \item Let $p$ be a prime number or zero. Suppose that a space $X$ has a generalised Postnikov tower of finite type $\cdots\to X_n\to X_{n-1}\to\cdots\to X_0$ with classifying maps $X_n\to K_n$. Then for any prime $p$, the tower
    $$\cdots\to(X_n)_{(p)}\to(X_{n-1})_{(p)}\to\cdots\to(X_0)_{(p)}$$
    with maps $(X_n)_{(p)}\to(K_n)_{(p)}$ is a $p$-local generalised Postnikov tower  of $p$-local finite type for $X_{(p)}$.
  \end{enumerate}
\end{lemma}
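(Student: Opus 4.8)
The plan is to treat the two parts separately, reducing each to standard facts about nilpotent spaces and their localisations; this is the content of \cite[Theorems 2.5 and 3.1]{IK}, and I would organise a self-contained argument as follows.

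For part (1), suppose first that $X$ is nilpotent of finite type. I would invoke the classical principal refinement of the Postnikov tower of a nilpotent space: the nilpotent $\pi_1(X)$-action on each $\pi_n(X)$, together with the lower central series of $\pi_1(X)$ itself, produces a tower of principal fibrations whose fibres are ordinary Eilenberg--MacLane spaces $K(A,n)$ and whose classifying maps land in $K(A,n+1)$. Since $n+1\ge 2$, each classifying space is simply connected, so this is a generalised Postnikov tower. For a nilpotent space, finite generation of $\widetilde{H}_*(X;\Z)$ in each degree is equivalent to finite generation of all homotopy groups, so every subquotient $A$ that occurs is finitely generated and the tower is of finite type. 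Conversely, given a finite type generalised Postnikov tower, I would prove by induction up the tower that each finite stage $X_n$ is nilpotent: $X_0$ is weakly contractible, and each step $X_n\to X_{n-1}\xto{k_n}K_n$ is a principal fibration with simply connected classifying space $K_n$, hence a simple fibration with fibre the loop space $\Omega K_n$; the total space of a nilpotent fibration over a nilpotent base with nilpotent fibre is nilpotent. It then remains to pass to the limit $X\simeq\mathrm{holim}\,X_n$.

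For part (2), part (1) shows that $X$ and every $X_n$ is nilpotent, so the Sullivan--Bousfield--Kan $p$-localisation is defined on the whole tower. I would use three standard properties of localisation of nilpotent spaces. First, $K(A,n)_{(p)}\simeq K(A\otimes\Z_{(p)},n)$, so each $(K_n)_{(p)}$ is a simply connected $p$-local GEM of finite $p$-local type. Second, $p$-localisation preserves fibre sequences of nilpotent spaces, so applying $(-)_{(p)}$ to $X_n\to X_{n-1}\xto{k_n}K_n$ produces a principal $p$-local fibration $(X_n)_{(p)}\to(X_{n-1})_{(p)}\to(K_n)_{(p)}$ with fibre $(X_n)_{(p)}=\hofib\bigl((X_{n-1})_{(p)}\to(K_n)_{(p)}\bigr)$. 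Third, $p$-localisation commutes with the homotopy inverse limit of the tower, so $X_{(p)}\simeq(\mathrm{holim}\,X_n)_{(p)}\simeq\mathrm{holim}\,(X_n)_{(p)}$. Assembling these facts exhibits the localised tower as a $p$-local generalised Postnikov tower of finite $p$-local type for $X_{(p)}$.

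The one genuinely delicate point in both parts is the interaction of the (possibly infinite) inverse limit with the rest of the argument, and this is exactly where the finite type hypothesis is essential, as the statement stresses. In part (1) one must know that nilpotency of the stages passes to $\mathrm{holim}\,X_n$, and in part (2) that $p$-localisation commutes with $\mathrm{holim}\,X_n$; both come down to the vanishing of the relevant $\lim^1$ terms for the towers $\{\pi_*(X_n)\}$, which the finite type hypothesis secures via a Mittag--Leffler condition (one convenient route being to reindex, as in the proof of Lemma \ref{GPT}, so that in each fixed degree the homotopy groups stabilise after finitely many stages). Since these limit arguments are carried out carefully in \cite{IK}, for the finished proof I would simply appeal to \cite[Theorems 2.5 and 3.1]{IK}.
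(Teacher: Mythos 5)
Your proposal is correct and takes essentially the same route as the paper: the paper gives no proof of this lemma at all, simply citing \cite[Theorems 2.5 and 3.1]{IK}, which is exactly where your argument also ultimately lands. Your sketch --- the principal refinement of the Postnikov tower for the forward direction of (1), induction up the tower of principal fibrations with simply-connected classifying GEMs for the converse, and the standard localisation facts for nilpotent fibrations in (2) --- is a sound reconstruction, and you correctly isolate the convergence of the homotopy inverse limit (the $\lim^1$ issue) as the one point where the finite type hypothesis is essential and where the appeal to \cite{IK} is doing real work.
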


The following is a slight refinement of \cite[Theorem D]{FHL}. Let $\mathrm{cat}\,X$ be the L-S category of a space $X$.

\begin{proposition}
  \label{FHL}
  Let $p$ be a prime and let $X$ be a simply-connected space. If $\mathrm{cat}\,X<\infty$ and $X_{(p)}\in\PP_p$, then $\widetilde{H}_*(X;\Z/p)=0$.
\end{proposition}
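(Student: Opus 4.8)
The plan is to reduce the statement to a purely $p$-local assertion and then invoke \cite[Theorem D]{FHL}, the only genuinely new point being the passage from spaces that \emph{admit} a finite $p$-local generalised Postnikov tower to the larger class $\PP_p$, which is by definition closed under homotopy retracts.

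First I would localise. Since $X$ is simply connected, $\widetilde H_*(X;\Z/p)\cong\widetilde H_*(X_{(p)};\Z/p)$, and $p$-localisation does not increase Lusternik--Schnirelmann category, so $\mathrm{cat}(X_{(p)})\le\mathrm{cat}(X)<\infty$. Hence it suffices to prove: a simply connected $p$-local space $Z\in\PP_p$ with $\mathrm{cat}(Z)<\infty$ satisfies $\widetilde H_*(Z;\Z/p)=0$. When $Z$ itself admits a finite $p$-local generalised Postnikov tower of finite $p$-local type this is exactly \cite[Theorem D]{FHL}: the essential content is that a nontrivial simply connected $p$-local GEM has infinite category — its mod $p$ cohomology contains a polynomial subalgebra on a class of positive even degree (produced, when the fundamental class is odd, by a Steenrod operation), forcing infinite cup length — and a finite tower of principal fibrations with such fibres cannot have finite category unless every fibre is mod $p$ trivial.

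It remains to remove the assumption that $Z$ carries an actual tower. By definition of the class $\PP_p$, the space $Z$ is a homotopy retract of a $p$-local space $Y$ that does admit a finite $p$-local generalised Postnikov tower of finite $p$-local type, with $i\colon Z\to Y$ and $r\colon Y\to Z$ satisfying $r\circ i\simeq 1_Z$. I would exploit the resulting splitting on cohomology: $r^*\colon H^*(Z;\Z/p)\to H^*(Y;\Z/p)$ is a monomorphism of graded algebras, split by $i^*$, and it commutes with the Steenrod operations, so its image $A\coloneqq\mathrm{Im}(r^*)$ is a Steenrod-closed subalgebra isomorphic to $H^*(Z;\Z/p)$. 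Finiteness of $\mathrm{cat}(Z)$ bounds the cup length of $H^*(Z;\Z/p)$, hence that of $A$.

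The hard part will be to conclude from this that $A$ is trivial. One cannot simply apply \cite[Theorem D]{FHL} to $Y$, because $\mathrm{cat}(Y)$ may well be infinite (for example, a point is a retract of $K(\Z_{(p)},2)$, so the finite-category hypothesis genuinely lives on $Z$ and not on $Y$). Instead one must argue directly that a nonzero Steenrod-closed retract subalgebra of the mod $p$ cohomology of a nontrivial $p$-local polyGEM always has infinite cup length; equivalently, passing to loop spaces, that $\Omega Z$ — a homotopy retract of the $p$-local polyGEM $\Omega Y$ — has $H_*(\Omega Z;\Z/p)$ growing faster than $\mathrm{cat}(Z)<\infty$ permits. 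This structural input is precisely what the machinery behind \cite[Theorem D]{FHL} provides, and transporting it across the retract is the only real obstacle; the localisation and the formal splitting of (co)homology are routine.
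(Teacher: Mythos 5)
Your reduction to the $p$-local statement and the observation that one cannot simply apply \cite[Theorem D]{FHL} to the ambient space $Y$ (since $\mathrm{cat}\,Y$ may be infinite) are both correct, and you have located the crux accurately. But the proposal stops exactly where the proof has to begin: you write that ``one must argue directly that a nonzero Steenrod-closed retract subalgebra \dots always has infinite cup length'' and that ``transporting it across the retract is the only real obstacle,'' without supplying that argument. As written this is a genuine gap, not a routine verification. Moreover, the property you propose to transport --- a bound on cup length in $H^*(\,\cdot\,;\Z/p)$ --- is not the input that the F\'elix--Halperin--Lemaire machinery actually uses, and finite cup length alone would not suffice.

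The paper closes the gap by isolating the correct retract-stable invariant, which lives on the loop space rather than on cohomology. By \cite[Theorem 6.2]{MS}, since $Y$ admits a finite $p$-local generalised Postnikov tower of finite type, $H_*(\Omega Y;\Z/p)$ is a \emph{solvable} Lie algebra in which every element of positive degree is nilpotent under products. A homotopy retraction $\Omega X_{(p)}\to\Omega Y\to\Omega X_{(p)}$ exhibits $H_*(\Omega X;\Z/p)$ (finite dimensional in each degree) as a retract, hence a sub-Lie-algebra, of $H_*(\Omega Y;\Z/p)$, and solvability and elementwise nilpotence are inherited by subobjects. These, together with $\mathrm{cat}\,X<\infty$, are precisely the hypotheses under which the proof of \cite[Theorem 5.1]{FHL} runs verbatim to give $\widetilde{H}_*(X;\Z/p)=0$. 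So your overall strategy is the intended one, but the missing step --- identifying solvability of the loop space homology as the property that survives the retract, via \cite{MS} --- is the entire content of the proposition beyond \cite{FHL}, and it is absent from your proposal.
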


\begin{proof}
By definition of  $\PP_p$, there is a simply-connected space $Y$ having a $p$-local generalised Postnikov tower of finite length and $p$-local finite type, such that $X_{(p)}$ is a homotopy retract of $Y$. It follows from \cite[Theorem 6.2]{MS} that $H_*(\Omega Y;\Z/p)$ is a solvable Lie algebra and each $x\in \widetilde{H}_*(\Omega Y;\Z/p)$ satisfies $x^{n}=0$ for some integer $n=n(x)$, possibly depending on $x$. Since $X$ is a homotopy retract of $Y$, $H_*(\Omega X;\Z/p)$ is also solvable as a Lie algebra that is finite dimensional in each degree. The proof now proceeds verbatim as the proof of \cite[Theorem 5.1]{FHL} to show that $\widetilde{H}_*(X;\Z/p)=0$.
\end{proof}

A space $X$ is called $p$-locally acyclic if $\widetilde{H}_*(X;\Z_{(p)})=0$.

\begin{corollary}
  \label{acyclic p-local}
  Let $p$ be a prime and let $X$ be a connected space. If $(\Sigma X)_{(p)}\in\PP_p$, then $X$ is $p$-locally acyclic.
\end{corollary}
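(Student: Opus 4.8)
The plan is to apply Proposition \ref{FHL} to the suspension $\Sigma X$ itself, and then to translate the resulting mod $p$ vanishing into $\Z_{(p)}$-acyclicity of $X$ by means of the suspension isomorphism together with a finite type argument. The point is that a suspension is exactly the kind of space to which Proposition \ref{FHL} applies.

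First I would record the two facts that let me put $\Sigma X$ in the role of ``$X$'' in Proposition \ref{FHL}. Since $X$ is connected, $\Sigma X$ is simply-connected. Moreover $\Sigma X$ is a co-$H$-space, so $\mathrm{cat}\,\Sigma X\le 1<\infty$. Together with the hypothesis $(\Sigma X)_{(p)}\in\PP_p$, Proposition \ref{FHL} then yields $\widetilde{H}_*(\Sigma X;\Z/p)=0$.

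Next I would upgrade this to integral $p$-local coefficients. Because $(\Sigma X)_{(p)}\in\PP_p$ and $\Sigma X$ is simply-connected, the space $\Sigma X$ is of finite $p$-local type, i.e. each $\widetilde{H}_n(\Sigma X;\Z_{(p)})$ is a finitely generated $\Z_{(p)}$-module. For such a module $M$, the universal coefficient theorem shows that the vanishing $\widetilde{H}_*(\Sigma X;\Z/p)=0$ forces $M\otimes\Z/p=0$; since $\Z_{(p)}$ is local with maximal ideal $(p)$, Nakayama's lemma then gives $M=0$. Hence $\widetilde{H}_*(\Sigma X;\Z_{(p)})=0$. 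Finally the suspension isomorphism $\widetilde{H}_n(X;\Z_{(p)})\cong\widetilde{H}_{n+1}(\Sigma X;\Z_{(p)})$ transfers this vanishing to $X$, giving $\widetilde{H}_*(X;\Z_{(p)})=0$, that is, $X$ is $p$-locally acyclic.

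I expect the only genuinely substantive point to be the observation that $\mathrm{cat}\,\Sigma X\le 1$, which is precisely what makes the finite L--S category hypothesis of Proposition \ref{FHL} available for the suspension; everything after that is a formal manipulation with the suspension isomorphism, the universal coefficient theorem, and Nakayama's lemma, with finite type ensuring that mod $p$ vanishing does propagate to $\Z_{(p)}$-coefficients.
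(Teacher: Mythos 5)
Your proof is correct and follows essentially the same route as the paper: apply Proposition \ref{FHL} to the simply-connected suspension $\Sigma X$ using $\mathrm{cat}\,\Sigma X\le 1$, then use the finite $p$-local type of $\Sigma X$ to upgrade mod $p$ acyclicity to $\Z_{(p)}$-acyclicity. You merely spell out the last step (universal coefficients plus Nakayama) in more detail than the paper does.
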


\begin{proof}
  Since $\mathrm{cat}\,(\Sigma X)_{(p)}\le 1$, it follows from Proposition \ref{FHL} that $\widetilde{H}^*(X;\Z/p)=0$. Since $(\Sigma X)_{(p)}$ is a homotopy retract of a $p$-local space of $p$-local finite type, $X$ itself is of $p$-local finite type. Thus $\widetilde{H}^*(X;\Z_{(p)})=0$.
\end{proof}

\begin{corollary}
  \label{acyclic}
  Let $Y$ be a connected space of finite type. If $\Sigma Y\in\PP$, then $Y$ is an acyclic space.
\end{corollary}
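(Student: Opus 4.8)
The plan is to deduce the integral statement from its $p$-local counterpart, Corollary \ref{acyclic p-local}, applied at every prime $p$ simultaneously. The decisive feature here is that $\Sigma Y$ is simply-connected, since $Y$ is connected; consequently the universal cover of $\Sigma Y$ is $\Sigma Y$ itself, and the otherwise delicate passage to $p$-localisation becomes transparent. Note also that $\Sigma Y$ is of finite type because $Y$ is, as $\widetilde{H}_{i+1}(\Sigma Y;\Z)\cong\widetilde{H}_i(Y;\Z)$.

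First I would establish that $(\Sigma Y)_{(p)}\in\PP_p$ for each prime $p$. Since $\Sigma Y\in\PP$, it is a homotopy retract of some connected space $W$ whose universal cover $\widetilde{W}$ admits a finite type generalised Postnikov tower of finite length. Writing $i\colon\Sigma Y\to W$ and $r\colon W\to\Sigma Y$ for the structure maps with $r\circ i\simeq 1$, simple connectivity of $\Sigma Y$ lets me lift $i$ to a map $\widetilde{i}\colon\Sigma Y\to\widetilde{W}$ along the covering $q\colon\widetilde{W}\to W$; then $r\circ q$ is a retraction of $\widetilde{i}$, exhibiting $\Sigma Y$ as a homotopy retract of the simply-connected space $\widetilde{W}$. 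Localising at $p$ and invoking Lemma \ref{GPT local}(2), the space $(\widetilde{W})_{(p)}$ carries a $p$-local generalised Postnikov tower of finite length and finite $p$-local type, and $(\Sigma Y)_{(p)}$ is a homotopy retract of it; hence $(\Sigma Y)_{(p)}\in\PP_p$. Applying Corollary \ref{acyclic p-local} with $X=Y$ then yields that $Y$ is $p$-locally acyclic, i.e. $\widetilde{H}_*(Y;\Z_{(p)})=0$, for every prime $p$.

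The integral conclusion is then a routine algebraic step. As $Y$ is of finite type, each $\widetilde{H}_i(Y;\Z)$ is a finitely generated abelian group, and flatness of $\Z_{(p)}$ over $\Z$ gives $\widetilde{H}_i(Y;\Z)\otimes\Z_{(p)}\cong\widetilde{H}_i(Y;\Z_{(p)})=0$ for all $p$. A finitely generated abelian group vanishing after tensoring with every $\Z_{(p)}$ must be trivial: a single prime already forces the free rank to be zero, and ranging over all primes annihilates the torsion. Thus $\widetilde{H}_*(Y;\Z)=0$ and $Y$ is acyclic.

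I expect the only genuine subtlety to lie in the first step, namely confirming that membership in $\PP$ localises to membership in $\PP_p$. The simple connectivity of $\Sigma Y$ is precisely what eliminates the fundamental-group bookkeeping that makes $\PP$ awkward to localise in general, reducing the claim to functoriality of $p$-localisation together with the established behaviour of finite type generalised Postnikov towers under localisation recorded in Lemma \ref{GPT local}.
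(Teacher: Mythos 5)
Your proof is correct and follows essentially the same route as the paper: localise via Lemma \ref{GPT local} to get $(\Sigma Y)_{(p)}\in\PP_p$, apply Corollary \ref{acyclic p-local} at every prime, and use finite type to conclude integral acyclicity. The only difference is that you spell out the retract-and-lift argument behind the citation of Lemma \ref{GPT local}, which the paper leaves implicit.
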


\begin{proof}
  By Lemma \ref{GPT local}, $(\Sigma Y)_{(p)}\in\PP_p$, and so by Corollary \ref{acyclic p-local}, $\widetilde{H}_*(Y;\Z_{(p)})=0$ for any prime $p$. Since $Y$ is of finite type, this implies $\widetilde{H}_*(Y;\Z)=0$, that is, $Y$ is an acyclic space.
\end{proof}

A map $f\colon X\to Y$, whose homotopy fibre is either an acyclic space or a disjoint union of such spaces, plays an important role in this paper.

\begin{example}
	For a connected space $X$, let $\kappa\colon TX\to X$ a map such that $TX$ is a space of type $K(\pi,1)$ and $\kappa$ induces an isomorphism in homology with respect to any local system of coefficients on $X$ (In particular in integral homomlogy). The existence of such a map is guaranteed by \cite{KaTh}. Furthermore, the homotopy fibre of $\kappa$ is an acyclic space.

\end{example}

We end this section by characterising such maps between connected spaces.

\begin{lemma}\label{lem-acyclic_fib}
Let $f\colon X\to Y$ be a map of connected spaces whose homotopy fibre is a disjoint union of acyclic spaces. Then $f$ can be factored as a composition
\[X\xto{l} X^+\xto{p} Y\]
where $X^+$ is the Quillen plus construction on $X$ with respect to some perfect normal subgroup of $\pi_1(X)$, and $p$ is a covering map. Conversely, the homotopy fibre of any map that can be factored this way is a disjoint union of acyclic spaces.
\end{lemma}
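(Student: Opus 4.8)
The plan is to prove the two directions separately, exploiting the fact that the Quillen plus construction is precisely the universal example of an acyclic map. First I would recall the key characterising property: for a connected space $X$ with a perfect normal subgroup $P \trianglelefteq \pi_1(X)$, the plus construction $l\colon X \to X^+_P$ is an acyclic map (homotopy fibre acyclic) inducing $\pi_1(X)/P$ on fundamental groups, and it is characterised up to homotopy by the pair $(X, P)$. A single connected acyclic fibre forces the map to be acyclic in the homology-isomorphism sense; a \emph{disjoint union} of acyclic spaces as fibre means the fibre has several path components, each acyclic, so $f$ fails to be $\pi_1$-surjective but becomes homology-acyclic after passing to an intermediate cover.

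\textbf{The forward direction.} Suppose $F = \hofib(f)$ is a disjoint union of acyclic spaces. Since each component of $F$ is acyclic (hence connected and with trivial reduced homology), $\pi_0(F)$ is identified with the cokernel of $\pi_1(X)\to\pi_1(Y)$, realised as the coset space; set $N = \Ima\{\pi_1(f)\colon \pi_1(X)\to\pi_1(Y)\}$. Let $p\colon X^+ \to Y$ be the covering of $Y$ corresponding to the subgroup $N\le\pi_1(Y)$, so that $X^+$ is connected, $\pi_1(X^+)\cong N$, and the map $f$ lifts through $p$ to a map $l\colon X\to X^+$ whose homotopy fibre is the single component of $F$ containing the basepoint — this component is acyclic. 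Thus $l$ is a homology equivalence onto $X^+$ with connected acyclic homotopy fibre. I would then identify the perfect subgroup: let $P = \Ker\{\pi_1(l)\colon\pi_1(X)\to\pi_1(X^+)\}$. Since the fibre of $l$ is acyclic and connected, a standard argument (comparing the five-term exact sequence of the fibration with $H_1$ and $H_2$ of the acyclic fibre) shows that $P$ is perfect and normal, and that $l$ satisfies the universal property of the plus construction $X^+ \simeq X^+_P$. Hence $f = p\circ l$ is the required factorisation.

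\textbf{The converse.} Given a factorisation $X\xto{l}X^+\xto{p}Y$ with $l$ a plus construction on a perfect normal subgroup and $p$ a covering map, I would compute $\hofib(f)$ directly. The homotopy fibre of the plus construction $l$ is acyclic and connected by the defining property of $X^+$. The homotopy fibre of a covering map $p$ is the discrete set $\pi_1(Y)/\pi_1(X^+)$. I would then use the fibre sequence comparison: in the composite $\hofib(l)\to\hofib(f)\to\hofib(p)$, arising from the fibration sequence associated to $l$ and $p$, the fibre $\hofib(f)$ fibres over the discrete set $\hofib(p)$ with each fibre equivalent to the acyclic $\hofib(l)$. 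Therefore $\hofib(f)$ is a disjoint union of copies of an acyclic space, as required.

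\textbf{Main obstacle.} The delicate point is the forward direction's identification of $P = \Ker(\pi_1(l))$ as a \emph{perfect} normal subgroup and the verification that $l$ genuinely realises the plus construction rather than merely being some acyclic map. The cleanest route is to invoke the universal characterisation of the plus construction: an acyclic map out of $X$ killing a perfect normal subgroup is unique, so it suffices to show that an acyclic map $l\colon X\to X^+$ with connected fibre automatically kills a perfect normal subgroup. This follows because an acyclic connected fibre $\hofib(l)$ has $H_1=H_2=0$, and feeding this into the exact sequence $\pi_1(\hofib(l))\to\pi_1(X)\to\pi_1(X^+)\to 1$ together with the vanishing of $H_*(\hofib(l))$ forces $P$ to equal its own commutator subgroup. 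Handling the disconnected fibre carefully — ensuring the covering $p$ absorbs exactly the $\pi_0(F)$ worth of non-surjectivity while $l$ absorbs the acyclicity — is where the bookkeeping must be done precisely.
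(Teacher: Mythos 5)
Your proposal is correct and follows essentially the same strategy as the paper: reduce to the connected-fibre case by inserting the cover of $Y$ corresponding to $\Ima(f_*)$ (so that the lift of $f$ has the acyclic basepoint component of $F$ as its fibre), identify $\Ker(f_*)$ as the perfect normal subgroup via the image of $\pi_1$ of the acyclic fibre, and invoke the standard characterisation of acyclic maps as plus constructions; the converse via the fibration sequence $\hofib(l)\to\hofib(f)\to\hofib(p)$ over the discrete set $\hofib(p)$ is exactly the intended argument. The only cosmetic difference is that the paper verifies these identifications by passing through the universal cover of $Y$ and the cover of $X$ associated to $\Ker(f_*)$, whereas you work directly with the intermediate cover; the content is the same.
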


\begin{proof}
Suppose that $f\colon X\to Y$ is a map whose homotopy fibre $F$ is a disjoint union of acyclic spaces. If $F$ is connected, then $\pi_1(F)$ is perfect, and the image of the map induced by the fibre inclusion $P= \Ima(\pi_1(F)\to\pi_1(X))$ is a perfect normal subgroup of $\pi_1(X)$.  Hence $Y$ is homotopy equivalent to $X^+$ with respect to $P$. (See  \cite{D} for a much refined version of this statement.) Hence in this case the claim is clear, and we may assume $F$ has more than one connected component.

The exact sequence
\[\pi_1(X)\xto{f_*}\pi_1(Y)\to\pi_0(F)\to *\]
implies that $f_*$ is not onto. In particular $\pi_1(Y)\neq 1$. Let $H= \Ima(f_*)\le\pi_1(Y)$. One has a commutative diagram of fibrations:
\[\xymatrix{
\widehat{F}\ar[r]\ar[d]& \widehat{X}\ar[r]^{\widehat{f}}\ar[d] & \widetilde{Y}\ar[d]\\
F\ar[r]\ar[d] & X\ar[r]^f\ar[d]^\alpha & Y\ar[d]\\
\pi_1(Y)/H \ar[r] & BH\ar[r] & B\pi_1(Y)
}\]
where $\widetilde{Y}$ is the universal cover of $Y$, and $\widehat{X}$ is the covering of $X$ corresponding to the normal subgroup $K= \Ker(f_*)\le \pi_1(X)$. It follows that $\pi_1(\widehat{X})\cong K$ is a perfect group since $\widehat{F}$ is connected and acyclic, and that $\widetilde{Y} \simeq \widehat{X}^+$. Let $X^+$ denote the plus construction on $X$ with respect to $K$. Then $\pi_1(X^+) \cong\pi_1(X)/K = H$, and so  $\alpha\colon X\to BH$ factors through $X^+$. Expanding the composition into a diagram of fibrations,
\[\xymatrix{
A\ar[r]\ar@{=}[d] & \widehat{X}\ar[r]\ar[d] & \widetilde{X^+}\ar[d]\\
A\ar[r]\ar[d] & X\ar[r]^l\ar[d]^\alpha  & X^+\ar[d]^\beta\\
\ast\ar[r] & BH\ar[r]^{Bl_\ast} & BH
}\]
 it follows that $\widetilde{Y}\simeq \widehat{X}^+\simeq \widetilde{X^+}$. Since $\widetilde{Y}$ is the universal cover, it follows that $X^+ \simeq \widetilde{Y}/H$. Now, consider the covering $p\colon X^+ \simeq \widetilde{Y}/H\to \widetilde{Y}/\pi_1(Y)\simeq Y$. Since $\Ima(f_*) = H$, the map $f\colon X\to Y$ lifts  to $l'\colon X\to X^+$, whose homotopy fibre is acyclic with fundamental group $K$. This gives the factorisation of $f$ as claimed.

 The converse is clear.
 \end{proof}



\section{Basic properties of polyhedral products}
\label{Polyhedral}

In this section,we recall the definition of polyhedral products and prove some basic properties. Let $K$ denote a simplicial complex on a vertex set $[m]=\{1,2,\ldots,m\}$, and let $(\underline{X},\underline{A})=\{(X_i,A_i)\}_{i=1}^m$ denote a collection of pairs of spaces. If $(X_i,A_i)=(X,A)$ for all $i$, then we write $(X,A)$ for $(\underline{X},\underline{A})$. For $\sigma\subset[m]$, let
$$D(\sigma)= Y_1\times\cdots\times Y_m\quad\text{such that}\quad Y_i=\begin{cases}X_i&i\in\sigma\\A_i&i\not\in\sigma.\end{cases}$$
Then the \hadgesh{polyhedral product} of $(\underline{X},\underline{A})$ over $K$ is defined by
$$Z_K(\underline{X},\underline{A})=\bigcup_{\sigma\in K}D(\sigma).$$

The join of simplicial complexes $K_1$ and $K_2$ is defined by
$$K_1*K_2=\{\sigma_1\sqcup\sigma_2\,\vert\,\sigma_1\in K_1,\,\sigma_2\in K_2\}.$$
For a subset $V\subset[m]$, write $(\underline{X},\underline{A})_V=\{(X_i,A_i)\}_{i\in V}$. By the definition of polyhedral products, the following is immediate.

\begin{lemma}
  \label{join}
  Let $K_i$ be a simplicial complex with vertex set $V_i$ for $i=1,2$ such that $V_1\sqcup V_2=[m]$. Then
  $$Z_{K_1*K_2}(\underline{X},\underline{A})=Z_{K_1}(\underline{X},\underline{A})_{V_1}\times Z_{K_2}(\underline{X},\underline{A})_{V_2}.$$
\end{lemma}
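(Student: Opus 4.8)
The plan is to prove Lemma \ref{join} directly from the definition of the polyhedral product, by matching up the defining unions on both sides. The statement is that when $K = K_1 * K_2$ with the vertex sets $V_1, V_2$ partitioning $[m]$, the polyhedral product factors as a product. Since the proof is expected to be routine (the excerpt itself says ``the following is immediate''), the main work is really an unwinding of notation carried out carefully.

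First I would record what a simplex of the join looks like: by definition $\sigma \in K_1 * K_2$ if and only if $\sigma = \sigma_1 \sqcup \sigma_2$ with $\sigma_1 \in K_1$ and $\sigma_2 \in K_2$. Because $V_1 \sqcup V_2 = [m]$, this decomposition is \emph{unique}: given $\sigma \subset [m]$ lying in $K_1 * K_2$, one must have $\sigma_1 = \sigma \cap V_1$ and $\sigma_2 = \sigma \cap V_2$. This uniqueness is the key structural fact that makes the factorisation work, and I would state it explicitly. Next I would observe how the functor $D(-)$ interacts with this splitting. For $\sigma = \sigma_1 \sqcup \sigma_2$, reordering the coordinates so that those indexed by $V_1$ come first, one has
\[
D(\sigma) = D_{V_1}(\sigma_1) \times D_{V_2}(\sigma_2),
\]
where $D_{V_j}(\sigma_j)$ denotes the analogous product formed over the index set $V_j$ using the pairs $(\underline{X},\underline{A})_{V_j}$; this is immediate from the coordinate-wise definition of $D(\sigma)$, since for $i \in V_j$ the choice of $X_i$ versus $A_i$ depends only on whether $i \in \sigma_j$.

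Combining these two observations, I would compute
\[
Z_{K_1 * K_2}(\underline{X},\underline{A}) = \bigcup_{\sigma \in K_1 * K_2} D(\sigma) = \bigcup_{\sigma_1 \in K_1,\, \sigma_2 \in K_2} D_{V_1}(\sigma_1) \times D_{V_2}(\sigma_2),
\]
and then use the elementary set-theoretic identity that a union of products over a product index set factors as a product of unions,
\[
\bigcup_{\sigma_1 \in K_1,\, \sigma_2 \in K_2} \bigl(D_{V_1}(\sigma_1) \times D_{V_2}(\sigma_2)\bigr) = \Bigl(\bigcup_{\sigma_1 \in K_1} D_{V_1}(\sigma_1)\Bigr) \times \Bigl(\bigcup_{\sigma_2 \in K_2} D_{V_2}(\sigma_2)\Bigr),
\]
whose right-hand side is precisely $Z_{K_1}(\underline{X},\underline{A})_{V_1} \times Z_{K_2}(\underline{X},\underline{A})_{V_2}$. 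This completes the proof.

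The only subtle point, and the place I would be most careful, is the factorisation-of-unions step: the identity $\bigcup_{a,b} (P_a \times Q_b) = (\bigcup_a P_a) \times (\bigcup_b Q_b)$ requires that the index ranges of $a$ and $b$ be independent of one another, which is exactly guaranteed here by the unique decomposition $\sigma = \sigma_1 \sqcup \sigma_2$ — every pair $(\sigma_1,\sigma_2) \in K_1 \times K_2$ genuinely occurs, and each corresponds to a distinct $\sigma$. In general such an interchange of union and product can fail if the index set is a proper subset of the product $K_1 \times K_2$, so the honest content of the lemma is precisely that the join of complexes produces the \emph{full} product index set. Everything else is bookkeeping about reordering coordinates, which I would mention but not belabour.
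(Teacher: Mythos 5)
Your proof is correct and is precisely the definitional unwinding the paper has in mind when it declares the lemma ``immediate'' and omits a proof: the unique splitting $\sigma = (\sigma\cap V_1)\sqcup(\sigma\cap V_2)$, the factorisation $D(\sigma)=D_{V_1}(\sigma_1)\times D_{V_2}(\sigma_2)$ up to reordering coordinates, and the interchange of union and product over the full index set $K_1\times K_2$. Your identification of the full-product index set as the one genuinely load-bearing point is exactly right, so there is nothing to add.
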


For a non-empty subset $I$ of $[m]$, the full subcomplex of $K$ induced by $I$ is defined by
$$K_I=\{\sigma\in K\,\vert\,\sigma\subset I\}.$$

\begin{lemma}
  \label{full subcomplex}
  For a non-empty subset $I$ of $[m]$, $Z_{K_I}(\underline{X},\underline{A})_I$ is a retract of $Z_K(\underline{X},\underline{A})$.
\end{lemma}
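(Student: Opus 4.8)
The plan is to realise the retraction by coordinate projection and the section by basepoint insertion, the entire content being two elementary combinatorial observations about the full subcomplex $K_I$. First I would choose basepoints $\ast\in A_i$ for each $i$ (available since the pairs $(X_i,A_i)$ are pointed, with $A_i$ nonempty). Write $\pi_I\colon\prod_{i=1}^m X_i\to\prod_{i\in I}X_i$ for the projection onto the coordinates indexed by $I$, and $s_I\colon\prod_{i\in I}X_i\to\prod_{i=1}^m X_i$ for the map inserting $\ast$ in every coordinate $i\notin I$; evidently $\pi_I\circ s_I=\mathrm{id}$. I would then show that these two maps restrict to the polyhedral products and exhibit $Z_{K_I}(\underline{X},\underline{A})_I$ as a retract of $Z_K(\underline{X},\underline{A})$.

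For the retraction, fix $\sigma\in K$ and observe that $\pi_I(D(\sigma))$ is precisely the product $\prod_{i\in I}Y_i$ with $Y_i=X_i$ for $i\in\sigma\cap I$ and $Y_i=A_i$ for $i\in I\setminus\sigma$, i.e.\ the $D$-construction of $\sigma\cap I$ taken over the index set $I$. Since $\sigma\cap I$ is a face of $\sigma$ contained in $I$, it lies in $K_I$, whence $\pi_I(D(\sigma))\subseteq Z_{K_I}(\underline{X},\underline{A})_I$; taking the union over $\sigma\in K$ yields a well-defined map $r\colon Z_K(\underline{X},\underline{A})\to Z_{K_I}(\underline{X},\underline{A})_I$. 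For the section, fix $\tau\in K_I$; then $\tau\in K$ as well, and $s_I$ sends the corresponding piece $\prod_{i\in I}Y_i$ into $D(\tau)$ viewed inside $\prod_{i=1}^m X_i$, because the inserted coordinates lie in $\{\ast\}\subseteq A_i$, consistent with $i\notin\tau$ for every $i\notin I$. Hence $s_I$ restricts to $s\colon Z_{K_I}(\underline{X},\underline{A})_I\to Z_K(\underline{X},\underline{A})$, and $r\circ s=\mathrm{id}$ by restricting the identity $\pi_I\circ s_I=\mathrm{id}$.

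There is no serious obstacle here: the only care needed is the bookkeeping that identifies $\pi_I(D(\sigma))$ with the $D$-construction of $\sigma\cap I$ over $I$, together with the two trivial facts $\sigma\cap I\in K_I$ and $K_I\subseteq K$. The one hypothesis worth flagging is the existence of a basepoint in each $A_i$, which is precisely what makes the section $s_I$ available; note also that this produces an honest (not merely up-to-homotopy) retract.
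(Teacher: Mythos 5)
Your proof is correct and is essentially the paper's own argument, made explicit: the paper identifies $Z_{K_I}(\underline{X},\underline{A})_I$ with $Z_K(\underline{X},\underline{A})\cap\prod_{i\in I}X_i$ (embedded via basepoints) and uses the coordinate projection $\prod_{i=1}^m X_i\to\prod_{i\in I}X_i$ as the retraction, which is exactly your $r$ and $s$. The bookkeeping you spell out ($\pi_I(D(\sigma))=D(\sigma\cap I)$ over $I$, $\sigma\cap I\in K_I$, $K_I\subseteq K$) is precisely what the paper leaves implicit.
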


\begin{proof}
  The space $\prod_{i\in I}X_i$ is a retract of $\prod_{i=1}^mX_i$, and one has
  	\[Z_{K_I}(\underline{X},\underline{A})_I = Z_K(\underline{X},\underline{A})\cap\prod_{i\in I}X_i.\]
  	Then the statement follows.
\end{proof}

The following is proved in \cite[Lemma 4.6]{K}.

\begin{lemma}
  \label{fibration lemma}
  Let $(\underline{F},\underline{F}^0)=\{(F_i,F_i^0)\}_{i=1}^m$ and $(\underline{E},\underline{E}^0)=\{(E_i,E_i^0)\}_{i=1}^m$, and assume that $(F_i,F_i^0)$ and $(E_i,E_i^0)$ are NDR-pairs and that $(F_i,F_i^0)\to(E_i,E_i^0)\to(B_i,B_i)$ is a homotopy fibration for $i=1,\ldots,m$. Then
  $$Z_K(\underline{F},\underline{F}^0)\to Z_K(\underline{E},\underline{E}^0)\to\prod_{i=1}^mB_i$$
  is a homotopy fibration.
\end{lemma}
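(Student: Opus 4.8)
The plan is to promote the coordinatewise homotopy fibrations to a single Hurewicz fibration whose lifting function preserves the polyhedral product subspace, and then to read off the fibre. First I would reduce to genuine fibrations. Using the NDR-pair hypotheses, each homotopy fibration of pairs $(F_i,F_i^0)\to(E_i,E_i^0)\to(B_i,B_i)$ may be replaced by a homotopy equivalent one in which $p_i\colon E_i\to B_i$ and $p_i^0\colon E_i^0\to B_i$ are Hurewicz fibrations, $E_i^0\subseteq E_i$ is a map over $B_i$, $(E_i,E_i^0)$ remains an NDR-pair, and $F_i,F_i^0$ are the actual fibres over a basepoint $b_i\in B_i$. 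This reduction is legitimate because $Z_K(-,-)$ carries homotopy equivalences of NDR-pairs to homotopy equivalences. Observe that $\prod_{i=1}^m B_i=Z_K(\underline B,\underline B)$, that the map in question is $Z_K$ applied to the pair maps $(E_i,E_i^0)\to(B_i,B_i)$, and that $Z_K(\underline E,\underline E^0)\subseteq\prod_{i=1}^m E_i$. Write $D_E(\sigma)$ and $D_F(\sigma)$ for the products $\prod_i Y_i$ with $Y_i=E_i$ (resp.\ $F_i$) for $i\in\sigma$ and $Y_i=E_i^0$ (resp.\ $F_i^0$) for $i\notin\sigma$.

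The heart of the argument is that the product lifting function restricts to the subspace. The map $\prod_i p_i\colon\prod_i E_i\to\prod_i B_i$ is a Hurewicz fibration with lifting function $\Lambda=\prod_i\Lambda_i$, where $\Lambda_i$ is a lifting function for $p_i$. The key point, discussed below, is that each $\Lambda_i$ may be chosen so that its restriction to $E_i^0$ is a lifting function for $p_i^0$. Granting this, fix $\sigma\in K$ and $z=(z_i)\in D_E(\sigma)$, and let $\gamma=(\gamma_i)$ be a path in $\prod_i B_i$ with $\gamma(0)=\pi(z)$, where $\pi\colon Z_K(\underline E,\underline E^0)\to\prod_i B_i$ denotes the projection. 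For $i\in\sigma$ the coordinate $\Lambda_i(z_i,\gamma_i)$ is a path in $E_i$, while for $i\notin\sigma$ we have $z_i\in E_i^0$, so $\Lambda_i(z_i,\gamma_i)$ is a path in $E_i^0$. Hence $\Lambda(z,\gamma)$ remains inside $D_E(\sigma)\subseteq Z_K(\underline E,\underline E^0)$, so $\Lambda$ restricts to a lifting function for $\pi$, and $\pi$ is a Hurewicz fibration. It then remains to identify the fibre of $\pi$ over $b=(b_1,\dots,b_m)$. Since strict fibres commute with the union defining the polyhedral product, this fibre is
\[
\bigcup_{\sigma\in K}\Big(\prod_{i\in\sigma}p_i^{-1}(b_i)\times\prod_{i\notin\sigma}(p_i^0)^{-1}(b_i)\Big)=\bigcup_{\sigma\in K}D_F(\sigma)=Z_K(\underline F,\underline F^0),
\]
which is exactly the desired fibre.

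I expect the main obstacle to be the key step that each $\Lambda_i$ can be chosen to restrict to a lifting function for $p_i^0$, i.e.\ that $(E_i,E_i^0)\to(B_i,B_i)$ is a fibration of pairs; this is where the NDR-pair hypothesis is essential. I would argue as follows. A lifting function for $p_i$ is a section of the evaluation map $\rho_i\colon (E_i)^I\to E_i\times_{B_i}(B_i)^I$, $\omega\mapsto(\omega(0),p_i\circ\omega)$, which is a fibration with contractible fibres (the fibre over $(e,\gamma)$ being the contractible space of lifts of $\gamma$ with initial point $e$), hence an acyclic fibration in the Str\o m model structure. The inclusion $E_i^0\times_{B_i}(B_i)^I\hookrightarrow E_i\times_{B_i}(B_i)^I$ is a cofibration, being the preimage of the cofibration $E_i^0\hookrightarrow E_i$ under the fibration $E_i\times_{B_i}(B_i)^I\to E_i$, $(e,\gamma)\mapsto e$. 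Starting from a lifting function $\Lambda_i^0$ for $p_i^0$, regarded as a partial section of $\rho_i$ over $E_i^0\times_{B_i}(B_i)^I$ with values in $(E_i^0)^I\subseteq(E_i)^I$, the right lifting property of the acyclic fibration $\rho_i$ against this cofibration extends $\Lambda_i^0$ to a global section $\Lambda_i$, i.e.\ a lifting function for $p_i$ restricting to $\Lambda_i^0$ on $E_i^0$. The one delicate point, that the preimage of the cofibration $E_i^0\hookrightarrow E_i$ under the fibration $E_i\times_{B_i}(B_i)^I\to E_i$ is again a cofibration, is precisely where the NDR structure of $(E_i,E_i^0)$ is used: one lifts the NDR deformation through the fibration by a lifting function, which is automatically stationary over $E_i^0$ because constant paths lift to constant paths.
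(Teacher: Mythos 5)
The paper does not actually prove Lemma \ref{fibration lemma}: it is quoted from \cite[Lemma 4.6]{K}, so there is no in-text argument to compare against. Judged on its own merits, your proof is correct and is the standard direct argument for statements of this kind: replace the coordinate maps by genuine Hurewicz fibrations, choose for each $i$ a lifting function for $p_i$ extending one for $p_i^0$, observe that the product lifting function then preserves each block $D_E(\sigma)$ and hence restricts to a lifting function for the projection $Z_K(\underline{E},\underline{E}^0)\to\prod_i B_i$, and identify the strict fibre with $Z_K(\underline{F},\underline{F}^0)$. The two load-bearing technical points are exactly the ones you isolate: that $E_i^0\times_{B_i}(B_i)^I\hookrightarrow E_i\times_{B_i}(B_i)^I$ is a closed cofibration (Str\o m's theorem that the preimage of a closed cofibration under a Hurewicz fibration is again a closed cofibration), and that $\rho_i$ is an acyclic Hurewicz fibration, so the partial section extends. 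Two small points deserve more care than you give them. First, in the initial replacement step one should verify that the mapping-path-space pairs $\bigl(E_i\times_{B_i}(B_i)^I,\,E_i^0\times_{B_i}(B_i)^I\bigr)$ and the corresponding pairs of strict fibres are again NDR-pairs (both follow from the same Str\o m theorem), so that homotopy invariance of $Z_K$ for NDR-pairs applies simultaneously to the total spaces and to the fibres; this is what actually transports the conclusion back to the original $(\underline{F},\underline{F}^0)$ and $(\underline{E},\underline{E}^0)$. Second, your parenthetical sketch of Str\o m's pullback theorem (``constant paths lift to constant paths'') is not quite right as stated, since a general lifting function need not be regular; but since you are only re-deriving a standard cited result, this does not affect the argument. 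With those caveats the proof is complete.
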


For any pointed space $X$, let $CX$ denote the cone on $X$.

\begin{proposition}
  \label{fibration}
  Assume that $(X_i,A_i)$ is a pointed NDR-pair for each $i=1\ldots m$ and let $F_i$ denote the homotopy fibre of the inclusion $A_i\to X_i$. Then there is a homotopy fibration
  $$Z_K(C\underline{F},\underline{F})\to Z_K(\underline{X},\underline{A})\to\prod_{i=1}^mX_i,$$
  where $(C\underline{F},\underline{F})=\{(CF_i,F_i)\}_{i=1}^m$.
\end{proposition}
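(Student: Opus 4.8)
The plan is to deduce the statement from the pairwise fibration Lemma \ref{fibration lemma}, applied for each $i$ to the homotopy fibration $F_i\to A_i\to X_i$. That lemma takes as input, for each $i$, a homotopy fibration of NDR pairs $(F_i^0\subset F_i)\to(E_i^0\subset E_i)\to B_i$ and outputs the polyhedral product fibration $Z_K(\underline F,\underline F^0)\to Z_K(\underline E,\underline E^0)\to\prod_i B_i$. So it suffices to exhibit, for each $i$, a homotopy fibration of NDR pairs
$$(CF_i,F_i)\longrightarrow(E_i,E_i^0)\longrightarrow(X_i,X_i),$$
whose base is the constant pair $(X_i,X_i)$, whose fibre pair is $(CF_i,F_i)$ with $F_i\hookrightarrow CF_i$ the inclusion of the base of the cone, and whose total pair $(E_i,E_i^0)$ is homotopy equivalent, as a pair, to $(X_i,A_i)$. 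Feeding this into Lemma \ref{fibration lemma} and then rewriting the total space via homotopy invariance of polyhedral products on NDR pairs will give the claim.

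To build the pair fibration I would first replace the inclusion $j_i\colon A_i\to X_i$ by a Hurewicz fibration. Let $\widehat A_i$ be the mapping path space $\{(a,\gamma)\mid a\in A_i,\ \gamma\colon[0,1]\to X_i,\ \gamma(0)=j_i(a)\}$ with projection $\pi_i(a,\gamma)=\gamma(1)$. Then $\pi_i\colon\widehat A_i\to X_i$ is a Hurewicz fibration whose fibre over the basepoint is exactly the homotopy fibre $F_i$, and the evident retraction $\widehat A_i\to A_i$ is a homotopy equivalence under which $\pi_i$ corresponds to $j_i$. Next I would form the mapping cylinder $M_i$ of $\pi_i$, obtained from $\bigl(\widehat A_i\times[0,1]\bigr)\sqcup X_i$ by gluing $((a,\gamma),1)$ to $\pi_i(a,\gamma)\in X_i$, with projection $M_i\to X_i$ sending $((a,\gamma),t)\mapsto\gamma(1)$ and $x\mapsto x$. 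Because $\pi_i$ is a fibration, this projection is again a Hurewicz fibration; its fibre over $x$ is the cone on $\pi_i^{-1}(x)\cong F_i$, hence is $CF_i$, and the $0$-end copy $\widehat A_i\times\{0\}\subset M_i$ meets that fibre precisely in $\pi_i^{-1}(x)=F_i$, the base of the cone. Since $M_i$ deformation retracts onto the $X_i$-end we get $M_i\simeq X_i$, and since $j_i$ is a cofibration one checks that $(M_i,\widehat A_i)\simeq(X_i,A_i)$ as pairs. Taking $(E_i,E_i^0)=(M_i,\widehat A_i)$ then produces the required homotopy fibration of pairs, the NDR conditions holding because cone inclusions and the $0$-end inclusion into a mapping cylinder are cofibrations.

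Assembling these over $i$ and applying Lemma \ref{fibration lemma} yields a homotopy fibration
$$Z_K(C\underline F,\underline F)\longrightarrow Z_K(\underline M,\underline{\widehat A})\longrightarrow\prod_{i=1}^m X_i,$$
and homotopy invariance of $Z_K(-,-)$ on the NDR pairs $(M_i,\widehat A_i)\simeq(X_i,A_i)$ (see \cite{BBCG}) identifies the total space with $Z_K(\underline X,\underline A)$, giving the proposition.

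The main obstacle I anticipate is the homotopy-theoretic bookkeeping around the fibre pair, rather than any deep input: I must verify carefully that the mapping cylinder of a Hurewicz fibration is again a Hurewicz fibration whose fibre is the cone on the original fibre, and that the sub-fibration $\widehat A_i\to X_i$ sits inside it exactly as the base inclusion $F_i\hookrightarrow CF_i$, so that the fibre pair really is $(CF_i,F_i)$ with the cone inclusion. The remaining points — that $(M_i,\widehat A_i)$ is an NDR pair equivalent to $(X_i,A_i)$ and that this equivalence is preserved under $Z_K(-,-)$ — are standard, but they do need to be recorded, since Lemma \ref{fibration lemma} is phrased for honest NDR pairs.
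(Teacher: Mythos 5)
Your proposal is correct and follows essentially the same route as the paper: the paper's proof is the one-line observation that $(CF_i,F_i)\to(X_i,A_i)\to(X_i,X_i)$ is a homotopy fibration of NDR pairs, to which Lemma \ref{fibration lemma} applies directly. Your mapping path space and mapping cylinder constructions simply make explicit the model of this pairwise fibration that the paper leaves implicit.
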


\begin{proof}
  Since $(CF_i,F_i)\to(X_i,A_i)\to(X_i,X_i)$ is a homotopy fibration such that $(CF_i,F_i)$ and $(X_i,A_i)$ are NDR-pairs for all $i$, the statement follows from Lemma \ref{fibration lemma}.
\end{proof}



\section{Properties of $Z_K(CX,X)$}
\label{Polyhedral-CX-X}

In this section we restrict attention to polyhedral products of pairs of the form $(CX, X)$ where $X$ is an arbitrary pointed space and $CX$ is the cone on $X$.


\subsection{Invariance}

Let  $(C\underline{X},\underline{X})=\{(CX_i,X_i)\}_{i=1}^m$. Recall from \cite[Theorem 2.21]{BBCG} that there is a natural homotopy decomposition
\begin{equation}
  \label{BBCG}
  \Sigma Z_K(C\underline{X},\underline{X})\simeq\Sigma\bigvee_{\emptyset\ne I\notin K}|\Sigma K_I|\wedge\widehat{X}^I,
\end{equation}
where $|L|$ means the geometric realization of a simplicial complex $L$ and $\widehat{X}^I=\bigwedge_{i\in I}X_i$.
	Let $\underline{X}$ and $\underline{Y}$ denote $\{X_i\}_{i\in[m]}$ and $\{Y_i\}_{i\in[m]}$, respectively. Let $f=\{(Cf_i,f_i)\colon(CX_i,X_i)\to(CY_i,Y_i)\}_{i=1}^m$ be a collection of maps. Assume that $X_i$ and $Y_i$ are connected, and that $\Sigma f_i$ is a homotopy equivalence for each $i=1,\ldots,m$. Let $Z_K(f)\colon Z_K(C\underline{X},\underline{X})\to Z_K(C\underline{Y},\underline{Y})$ be the map induced by $f$. Then  by \eqref{BBCG}, $\Sigma Z_K(f)$ is a homotopy equivalence. In particular $\Sigma Z_K(f)$ and hence $Z_K(f)$  both induce isomorphisms in homology. On the other hand, since $X_i$ and $Y_i$ is connected for each $i$, $Z_K(CX,X)$ and $Z_K(CY,Y)$ are simply-connected as in \cite{IK2}. Thus by the J.H.C. Whitehead theorem, the map $Z_K(f)$ is  a homotopy equivalence.

We generalise this observation to the case that $X_i$ and $Y_i$ are not necessarily connected. We start by considering some general properties of (homotopy) pushouts. For spaces $A$ and $B$, let
$$A\rtimes B= (A\times B)/(A\times*).$$

\begin{lemma}
  \label{pushout}
  Let $Z$ be the pushout space of the system
  \begin{equation}
    \label{hp lem}
    \xymatrix{B\times X & A\times X\ar[r]^{1\times\mathrm{incl}}\ar[l]_{f\times 1}&A\times CX.}
  \end{equation}
  If $f$ is null-homotopic, then there is a homotopy equivalence
  $$Z\xto{h}(B\rtimes X)\vee\Sigma(A\wedge X)$$
  which is natural with respect to $A,B,X$. Furthermore, the homotopy class of the homotopy equivalence $h$ depends only on the homotopy class of the null homotopy for $f$ as a map that restricts to $f$ on $A\times \{0\}$ and to the constant map on $A\times\{1\}$.
\end{lemma}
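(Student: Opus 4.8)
The plan is to construct the equivalence $h$ directly by analysing the pushout, exploiting the null-homotopy of $f$ to "unfold" the two pieces of $Z$ into a wedge. The key observation is that the pushout in \eqref{hp lem} glues $B\times X$ to $A\times CX$ along $A\times X$, and a choice of null-homotopy $H\colon A\times X\times I\to B\times X$ for $f\times 1$ gives us the freedom to slide the gluing map to a constant. First I would set up the pushout more concretely: writing $C X = (X\times I)/(X\times\{1\})$ and decomposing $A\times CX$ along the cone coordinate, I would identify the mapping-cylinder structure implicit in the pushout and use the null-homotopy $H$ to build a map out of $Z$.

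The cleanest route I expect is to factor $h$ as a composite of two natural maps. First, since $f$ is null-homotopic, the map $f\times 1\colon A\times X\to B\times X$ factors up to homotopy through the projection $A\times X\to X\hookrightarrow B\times X$ (where $X\hookrightarrow B\times X$ is the inclusion at the basepoint of $B$, using that $f\simeq \mathrm{const}$ means the $B$-component is null). This lets me replace the pushout \eqref{hp lem}, up to homotopy, by the pushout of $B\times X \leftarrow A\times X \to A\times CX$ where the left map is the composite $A\times X\to X\xrightarrow{*\times 1} B\times X$. The next step is to recognise that this modified pushout splits: the subspace $B\times X$ contributes, after collapsing the cone on the $A$-factor, the half-smash $B\rtimes X = (B\times X)/(B\times *)$, while the complementary gluing of $A\times X$ into $A\times CX$ produces the unreduced-to-reduced suspension term $\Sigma(A\wedge X)$. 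I would make this precise by exhibiting explicit collapse maps and checking they assemble into a well-defined map $Z\to (B\rtimes X)\vee\Sigma(A\wedge X)$ that is a homotopy equivalence, e.g.\ by verifying it induces an isomorphism in homology and tracking fundamental groups, or more cleanly by constructing a homotopy inverse directly from the universal property.

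The main obstacle, and the part requiring genuine care, is the final sentence: the claim that the homotopy class of $h$ depends only on the homotopy class of the chosen null-homotopy (as a homotopy restricting to $f$ at one end and to the constant map at the other). This is precisely the point where the construction of $h$ uses the null-homotopy $H$ as input, so I would keep track of $H$ explicitly throughout, organising the construction so that $h = h_H$ is manifestly natural in $A, B, X$ and manifestly a function of $H$. Then, given two null-homotopies $H_0, H_1$ that are themselves homotopic rel the two ends, I would feed a homotopy $H_0 \simeq H_1$ into the same construction to produce a homotopy $h_{H_0} \simeq h_{H_1}$; the naturality statement amounts to observing that every step of the construction is functorial in the null-homotopy datum. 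The delicate bookkeeping here is ensuring the basepoint and cone-collapse conventions are consistent so that the dependence is genuinely only on the rel-endpoints homotopy class of $H$ and nothing more.

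The naturality with respect to $A, B, X$ should follow for free, since every map I use (projections, cone collapses, smash and half-smash quotients, and the map induced by $H$) is natural in these inputs; I would simply remark this rather than belabour it.
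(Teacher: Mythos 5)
Your overall strategy --- use the null homotopy to straighten $f\times 1$ into the map $(a,x)\mapsto(*,x)$ and then identify the resulting homotopy pushout --- is the same basic idea as the paper's, and the first step is sound: since the right leg $A\times X\to A\times CX$ is a cofibration, the pushout is a homotopy pushout and may be replaced, up to an equivalence depending on the chosen homotopy, by the straightened one. The genuine gap is in the second step. You assert that the straightened pushout ``splits'' as $(B\rtimes X)\vee\Sigma(A\wedge X)$, but you supply no mechanism for the splitting, and neither of your fallback verifications works as stated. A homology isomorphism plus a $\pi_1$ check does not yield a homotopy equivalence here, because $A$, $B$, $X$ are arbitrary spaces and $Z$ need not be simple, so the homology Whitehead theorem does not apply. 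And the naive collapse maps do not directly produce the claimed wedge: for example, collapsing the subspace $B\times X$ of the straightened pushout gives $(A\times CX)/(A\times X)\simeq \Sigma X\vee\Sigma(A\wedge X)$, which carries a spurious $\Sigma X$ summand, and showing that this summand is absorbed into the other piece is precisely the content that is missing from your sketch.

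The missing idea --- and the route the paper takes --- is to interpolate through the join. Factor the straightened left leg as $A\times X\xto{\proj}X\xto{*\times 1}B\times X$ (equivalently, use the null homotopy to extend $f$ to $\bar{f}\colon CA\to B$) and decompose the pushout into two squares. The upper square, the pushout of $X\leftarrow A\times X\to A\times CX$, is the join $A*X\simeq\Sigma(A\wedge X)$, and the induced map $X\to A*X$ is null-homotopic (slide along join lines into $A$). A homotopy pushout $P\leftarrow W\to Q$ whose right leg is null-homotopic splits as $C_u\vee Q$, where $C_u$ is the mapping cone of the left leg $u$; applied to the lower square this gives $Z\simeq C_{*\times 1}\vee (A*X)$, i.e.\ the cofibre of $\{*\}\times X\hookrightarrow B\times X$ wedged with $\Sigma(A\wedge X)$. (Do check which slice of $B\times X$ is actually being collapsed here against the convention $A\rtimes B=(A\times B)/(A\times *)$: the summand arises as $(B\times X)/(\{*\}\times X)$, not as $(B\times X)/(B\times \{*\})$, so the identification of the half-smash factor requires care.) Your treatment of naturality and of the dependence on the rel-endpoints homotopy class of the null homotopy is in the right spirit and matches the paper's: once the equivalence is built from $\bar{f}$, its homotopy class visibly depends only on the class of $\bar{f}$ as a map restricting to $f$ on one end and to the constant map on the other.
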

\begin{proof}
  Let $F$ be a null homotopy for $f$ and let $\bar{f}\colon CA\to B$ be the extension of $f$ to $CA$ via $F$. Then $f$ can be factored as the inclusion of $A$ in $CA$ followed by $\bar{f}$, and the pushout diagram of the system \eqref{hp lem} can be decomposed as:
  $$\xymatrix{A\times X\ar[r]^{1\times\mathrm{incl}}\ar[d]&A\times CX\ar[d]\\
  CA\times X\ar[r]^g\ar[d]_{\bar{f}\times 1}&A*X\ar[d]\\
  B\times X\ar[r]&Z}$$
  where $A*X$ is the join of $A$ and $X$, and both squares are homotopy pushout diagrams, the top by construction and the bottom because $g$ is a cofibration. The desired homotopy equivalence now follows since $g$ is null homotopic and $A*X\simeq\Sigma(A\wedge X)$. The equivalence is clearly natural with respect to $A, B$ and $X$ because of functoriality of homotopy pushouts.

  Finally, notice that the homotopy class of $\bar{f}$ as a map $CA\to B$ that is $f$ on $A\times\{0\}\subset CA$ and the constant map on $A\times \{1\}$ depends only on the homotopy class of the chosen null homotopy for $f$. This proves the last claim.
\end{proof}

Now we apply the above results on (homotopy) pushouts to polyhedral products.

\begin{lemma}
  \label{K+v}
  Let $K$ be a disjoint union of a simplicial complex $L$ with vertex set $[m-1]$ and an isolated vertex $\{m\}$. Then there is a homotopy equivalence
  $$Z_K(C\underline{X},\underline{X})\simeq(Z_L(C\underline{X},\underline{X})_{[m-1]}\rtimes X_m)\vee(\Sigma(\prod_{i=1}^{m-1}X_i)\wedge X_m)$$
  that is natural with respect to each $X_i$.
\end{lemma}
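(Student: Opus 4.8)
The plan is to realise $Z_K(C\underline{X},\underline{X})$ as an honest pushout of the shape appearing in Lemma \ref{pushout} and then quote that lemma. First I would analyse the cells $D(\sigma)$ for $\sigma\in K$. Since $K$ is the disjoint union of $L$ (on $[m-1]$) and the isolated vertex $\{m\}$, every simplex of $K$ is either a simplex of $L$, in which case $m\notin\sigma$ and the $m$-th coordinate of $D(\sigma)$ is $X_m$, or the single simplex $\{m\}$, in which case $D(\{m\})=(\prod_{i=1}^{m-1}X_i)\times CX_m$. Writing $W=Z_L(C\underline{X},\underline{X})_{[m-1]}$ and $P=\prod_{i=1}^{m-1}X_i$, the union of the cells indexed by $L$ is exactly $W\times X_m$, while the remaining cell is $P\times CX_m$; their intersection is $P\times X_m$, using that $P=D(\emptyset)$ sits inside $W$. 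Hence $Z_K(C\underline{X},\underline{X})$ is the pushout of
\[ W\times X_m \xleftarrow{\iota\times 1} P\times X_m \xrightarrow{1\times j} P\times CX_m, \]
where $\iota\colon P\hookrightarrow W$ is the inclusion of the $\emptyset$-cell and $j\colon X_m\hookrightarrow CX_m$ is the base inclusion.

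This is precisely the system of Lemma \ref{pushout} with $A=P$, $B=W$, $X=X_m$ and $f=\iota$. The hypothesis of that lemma requires only that $f=\iota$ be null-homotopic, and once this is established the lemma yields a natural homotopy equivalence $Z_K(C\underline{X},\underline{X})\simeq(W\rtimes X_m)\vee\Sigma(P\wedge X_m)$, which is the desired formula since $B\rtimes X=W\rtimes X_m$ and $\Sigma(A\wedge X)=\Sigma((\prod_{i=1}^{m-1}X_i)\wedge X_m)$. So the whole statement reduces to a single point, which I expect to be the main obstacle: showing that the inclusion of the bottom cell $\iota\colon\prod_{i=1}^{m-1}X_i\hookrightarrow W$ is null-homotopic, and doing so naturally in the $X_i$. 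Note that the naive simultaneous contraction of all cone coordinates fails, since at intermediate times every coordinate leaves its base and the track escapes $W$ unless $L$ is a simplex.

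For the null-homotopy I would therefore contract one coordinate at a time, exploiting that every singleton $\{k\}$ with $1\le k\le m-1$ is a vertex of $L$, so that $D(\{k\})=CX_k\times\prod_{i\ne k}X_i\subseteq W$. Starting from a point of $D(\emptyset)$, at the $k$-th stage all coordinates $i<k$ have already been moved to the basepoint $*_i\in X_i$ and all coordinates $i>k$ are untouched; I then slide the $k$-th coordinate from $x_k$ to $*_k$ within the contractible cone $CX_k$ (for instance up to the apex via the canonical cone contraction and back down to $*_k$). Throughout this stage only the $k$-th coordinate leaves the base, so the track lies in the single cell $D(\{k\})\subseteq W$; the stages concatenate to a homotopy from $\iota$ to the constant map at $(*_1,\dots,*_{m-1})$. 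Crucially this argument needs no edges of $L$ — only that each $\{k\}$ is a vertex — so it applies for arbitrary $L$, and being built entirely from the canonical cone structures and basepoints it is natural in each $X_i$.

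Finally I would assemble the conclusion. The chosen null-homotopy is natural in the $X_i$, and by the last sentence of Lemma \ref{pushout} the homotopy class of the resulting equivalence depends only on the homotopy class of this null-homotopy; combined with the functoriality of the polyhedral product construction in each pair, this upgrades the equivalence $Z_K(C\underline{X},\underline{X})\simeq(W\rtimes X_m)\vee\Sigma(P\wedge X_m)$ to one that is natural with respect to each $X_i$, completing the proof.
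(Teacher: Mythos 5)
Your proof is correct and takes essentially the same route as the paper: both realise $Z_K(C\underline{X},\underline{X})$ as the pushout of $Z_L(C\underline{X},\underline{X})_{[m-1]}\times X_m\leftarrow \bigl(\prod_{i=1}^{m-1}X_i\bigr)\times X_m\to \bigl(\prod_{i=1}^{m-1}X_i\bigr)\times CX_m$, prove that the inclusion of the bottom cell into $Z_L(C\underline{X},\underline{X})_{[m-1]}$ is null-homotopic by handling one coordinate at a time, and then invoke Lemma \ref{pushout}. The paper phrases the coordinatewise null-homotopy as the observation that the inclusion factors through $\prod_{j<i}X_j\times CX_i\times\prod_{j>i}X_j$ and hence, up to homotopy, through the projection killing the $i$-th factor, which is the same argument as your stagewise cone contraction.
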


\begin{proof}
	Consider $\prod_{i=1}^{m-1}X_i$ as $Z_L(\underline{X},\underline{X})$. Then one has an obvious inclusion
		\[f\colon \prod_{i=1}^{m-1}X_i\to Z_L(C\underline{X},\underline{X})_{[m-1]}.\] We claim that $f$ is null homotopic. To see this, notice that for each $i=1,\ldots, m-1$, the map $f$ factors through $\prod_{j=1}^{i-1}X_j\times CX_i\times \prod_{j=i+1}^{m-1}X_j$, and hence up to homotopy through the projection to $\prod_{j=1}^{i-1}X_j\times *\times\prod_{j=i+1}^{m-1}X_j$. Proceeding by induction, removing one factor at a time, the claim follows.
  Next, consider the pushout diagram:
  $$\xymatrix{\prod_{i=1}^{m-1}X_i\times X_m\ar[r]^{1\times \mathrm{incl}}\ar[d]_{f\times 1}&\prod_{i=1}^{m-1}X_i\times CX_m\ar[d]\\
  Z_L(C\underline{X},\underline{X})_{[m-1]}\times X_m\ar[r]&Z_K(C\underline{X},\underline{X})}$$
  The statement now follows from  Lemma \ref{pushout}.
  \end{proof}

\begin{proposition}
  \label{invariance}
  Let $(C\underline{X},\underline{X})=\{(CX_i,X_i)\}_{i=1}^m$ and $(C\underline{Y},\underline{Y})=\{(CY_i,Y_i)\}_{i=1}^m$. Let $f=\{(Cf_i,f_i)\colon(CX_i,X_i)\to(CY_i,Y_i)\}_{i=1}^m$ be be a collection of maps. If $\Sigma f_i$ is a homotopy equivalence for each $i=1,\ldots,m$, then the map
  $$Z_K(f)\colon Z_K(C\underline{X},\underline{X})\to Z_K(C\underline{Y},\underline{Y})$$
  induced by $f$ on polyhedral products is a homotopy equivalence.
\end{proposition}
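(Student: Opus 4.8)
The plan is to bootstrap from the connected case, which is settled in the paragraph preceding Lemma~\ref{pushout}: when every $X_i$ and $Y_i$ is connected, \eqref{BBCG} shows $\Sigma Z_K(f)$ is an equivalence, both polyhedral products are simply connected, and Whitehead's theorem upgrades the resulting integral homology isomorphism to a genuine homotopy equivalence. The entire problem is therefore to remove the connectivity assumption, the point being that for disconnected $X_i$ the space $Z_K(C\underline{X},\underline{X})$ is in general \emph{not} simply connected, so Whitehead's theorem cannot be applied naively.

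I would first dispose of homology. Since $\Sigma f_i$ is a homotopy equivalence, $f_i$ induces an isomorphism on reduced integral homology, so by Künneth each smash map $\widehat{X}^I\to\widehat{Y}^I$ is a homology isomorphism; substituting into \eqref{BBCG} shows that $\Sigma Z_K(f)$ is a homotopy equivalence, and hence that $Z_K(f)$ is an integral homology isomorphism for every $K$ and every (possibly disconnected) collection. I would then set up an induction on the number of vertices $m$ designed so that each $X_i$ influences the homotopy type only through suspension-type constructions, where the hypothesis on $\Sigma f_i$ is exactly what is on offer. Lemma~\ref{join} handles the case of a join, reducing $Z_K(f)$ to a product of maps on fewer vertices. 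Otherwise I would single out a vertex and feed the corresponding pushout presentation into Lemma~\ref{pushout}: whenever the attaching map $Z_{\lk_K(m)}\to Z_{\dl_K(m)}$ is null-homotopic (as for an isolated vertex, which is the content of Lemma~\ref{K+v}) the pushout splits naturally into pieces of the form $Z_{\dl_K(m)}\rtimes X_m$ and $\Sigma(\prod_{i<m}X_i\wedge X_m)$, in which $X_m$ appears only smashed or half-smashed.

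The step I expect to be the real obstacle is passing from the homology isomorphism to a homotopy equivalence in the presence of a nontrivial fundamental group, compounded by the fact that the attaching maps above are not null-homotopic for a general vertex. To handle $\pi_1$ I would apply van Kampen to the same pushouts and observe that $\pi_1(Z_K(C\underline{X},\underline{X}))$ depends only on $K$ and on the pointed sets $\pi_0(X_i)$, which $f$ matches bijectively because $\Sigma f_i$ is an equivalence; thus $Z_K(f)$ is a $\pi_1$-isomorphism. It then remains to prove that the induced map of universal covers is an integral homology isomorphism, after which Whitehead's theorem applies to the simply connected covers and hence to $Z_K(f)$ itself. Establishing this last point is the crux: it calls for a description of $\widetilde{Z_K(C\underline{X},\underline{X})}$ that is compatible with the vertex-peeling induction (for instance one identifying the cover with a polyhedral-product-type object built from the covers of the factors), so that its homology is again governed by the smashes $\widehat{X}^I$ and thereby controlled by the maps $\Sigma f_i$.
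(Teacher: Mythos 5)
Your proposal correctly isolates the difficulty --- when the $X_i$ are disconnected, $Z_K(C\underline{X},\underline{X})$ need not be simply connected, so the homology isomorphism supplied by \eqref{BBCG} cannot be upgraded via Whitehead's theorem --- but it does not resolve it. The step you yourself label the crux (that the induced map of universal covers is a homology isomorphism, which would require an explicit, induction-compatible model of $\widetilde{Z_K(C\underline{X},\underline{X})}$) is precisely the missing content, and nothing in the proposal supplies it; even the preliminary claim that $\pi_1(Z_K(C\underline{X},\underline{X}))$ depends only on $K$ and the pointed sets $\pi_0(X_i)$ is asserted via van Kampen but not carried out. As written, the argument establishes only that $Z_K(f)$ is an integral homology isomorphism together with an unproven $\pi_1$ statement, which is strictly weaker than the proposition. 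Note also that a $\pi_1$-isomorphism plus a homology isomorphism of total spaces does not by itself yield a homology isomorphism of universal covers, so even granting your van Kampen step the route remains open.

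The paper's proof sidesteps homology, universal covers and Whitehead's theorem entirely, and in particular dissolves the other obstacle you raise (attaching maps that are not null-homotopic for a general vertex). It inducts on $m$ using, for a vertex $v$, the pushout of $\overline{\dl_K(v)}\leftarrow\overline{\lk_K(v)}\rightarrow\overline{\lk_K(v)\ast v}$, where $\overline{L}$ denotes $L$ with all missing vertices of $K$ adjoined as isolated vertices. Every corner of this pushout therefore either is a join with a cone on a full vertex set (Lemma \ref{join}) or contains an isolated vertex, so Lemma \ref{K+v} applies: the inclusion $\prod_{i=1}^{m-1}X_i\to Z_L(C\underline{X},\underline{X})_{[m-1]}$ is \emph{always} null-homotopic (cone off one coordinate at a time), and Lemma \ref{pushout} gives a natural splitting into a half-smash $Z_L(C\underline{X},\underline{X})_{[m-1]}\rtimes X_m$ and a smash $\Sigma(\prod_{i<m}X_i)\wedge X_m$, in which $X_m$ enters only through its stable type. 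The inductive hypothesis handles the $Z_L$ factor, the hypothesis on $\Sigma f_i$ handles the smash factors, and homotopy invariance of homotopy pushouts applied to \eqref{pushout Z} concludes. If you want to repair your write-up, adopting this $\overline{L}$ device is the key idea to import; the homology/universal-cover strategy would require substantially more machinery than the statement warrants.
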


\begin{proof}
   Let $L\subseteq K$ be a subcomplex. Let $\overline{L}$ be the simplicial complex obtained as the disjoint union of $L$ and  the set of all vertices of $K$ that are not in $L$. The link and the deletion of a vertex $v$ of $K$ are defined by
  $$\lk_K(v)=\{\sigma\in K\,\vert\,v\not\in\sigma,\,\sigma\cup v\in K\}\quad\text{and}\quad\dl_K(v)=\{\sigma\in K\,\vert\,v\not\in\sigma\}.$$
  Then there is a pushout of simplicial complexes
  $$\xymatrix{\overline{\lk_K(v)}\ar[r]\ar[d]&\overline{\lk_K(v)\ast v}\ar[d]\\
  \overline{\dl_K(v)}\ar[r]&K}$$
  which induces a pushout of spaces
  \begin{equation}
    \label{pushout Z}
    \xymatrix{Z_{\overline{\lk_K(v)}}(C\underline{A},\underline{A})\ar[r]\ar[d]&Z_{\overline{\lk_K(v)\ast v}}(C\underline{A},\underline{A})\ar[d]\\
    Z_{\overline{\dl_K(v)}}(C\underline{A},\underline{A})\ar[r]&Z_K(C\underline{A},\underline{A})}
  \end{equation}
  for any $(C\underline{A},\underline{A})=\{(CA_i,A_i)\}_{i=1}^m$.

  We prove the statement by induction on $m$ - the number of vertices of $K$. For $m=1$, $Z_K(C\underline{X},\underline{X})=CX_1\simeq CY_1=Z_K(C\underline{Y},\underline{Y})$, and so the statement is true. Assume that the statement holds for any simplicial complex on at most $m-1$ vertices. Consider a commutative diagram
  $$\xymatrix{Z_{\overline{\dl_K(m)}}(C\underline{X},\underline{X})\ar[d]&Z_{\overline{\lk_K(m)}}(C\underline{X},\underline{X})\ar[r]\ar[l]\ar[d]&Z_{\overline{\lk_K(m)*m}}(C\underline{X},\underline{X})\ar[d]\\
  Z_{\overline{\dl_K(m)}}(C\underline{Y},\underline{Y})&Z_{\overline{\lk_K(m)}}(C\underline{Y},\underline{Y})\ar[r]\ar[l]&Z_{\overline{\lk_K(m)*m}}(C\underline{Y},\underline{Y})}$$
  where all horizontal maps are cofibrations. Since \[\overline{\dl_K(m)}=\dl_K(m)\sqcup m\quad \text{and}\quad \overline{\lk(m)}=\dl_{\overline{\lk(m)}}(m)\sqcup m,\]
  the left and the middle vertical maps are homotopy equivalences by Lemma \ref{K+v} and the induction hypothesis. If $\lk_K(m)$ has $m-1$ vertices, then $\overline{\lk_K(m)*m}=\lk_K(m)*m$, and so by Lemma \ref{join}, the right vertical map is a homotopy equivalence. If $\lk_K(m)$ has  less than $m-1$ vertices, then there is a vertex $v$ of $K$ such that $\overline{\lk_K(m)*m}=\dl_{\overline{\lk_K(m)*m}}(v)\sqcup v$. Then the right vertical map is a homotopy equivalence by Lemma \ref{K+v} and the induction hypothesis. Thus by homotopy invariance of homotopy pushouts applied to \eqref{pushout Z} the statement follows.
\end{proof}

\begin{corollary}\label{cor-acyclic}
	Let $X$ be a disjoint union of acyclic spaces, and let $S=\pi_0(X)$. Then for any simplicial complex $K$
  $$Z_K(CX, X)\simeq Z_K(CS,S).$$
\end{corollary}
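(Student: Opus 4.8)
The plan is to apply Proposition~\ref{invariance}. Set $S=\pi_0(X)$, regarded as a discrete pointed space whose basepoint is the component containing the basepoint of $X$, and let $q\colon X\to S$ be the map collapsing each path-component to the corresponding point. Taking the collection $f=\{(Cq,q)\colon(CX,X)\to(CS,S)\}_{i=1}^m$, i.e. the same map at every vertex, Proposition~\ref{invariance} reduces the claim to showing that the reduced suspension $\Sigma q\colon\Sigma X\to\Sigma S$ is a homotopy equivalence.

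First I would record that $q$ is an isomorphism on integral homology: each component of $X$ is acyclic, so $\widetilde H_n(X)=0$ for $n>0$, while $q$ identifies $H_0(X)\cong\Z[S]\cong H_0(S)$. However, this alone does not suffice, and here lies the main obstacle: $\Sigma X$ and $\Sigma S$ are in general not simply-connected (already $\Sigma S^0\simeq S^1$), so one cannot pass from a homology isomorphism to a homotopy equivalence by the Whitehead theorem. To get around this I would avoid homology altogether and use the natural splitting of the suspension of a disjoint union. Writing $X=\coprod_{s\in S}X_s$ with basepoint in $X_0$, there is a pointed homeomorphism $X\cong X_0\vee\bigvee_{s\neq0}(X_s)_+$, and combined with the natural equivalence $\Sigma((X_s)_+)\simeq\Sigma X_s\vee S^1$ this yields a natural homotopy equivalence
\[\Sigma X\;\simeq\;\bigvee_{s\in S}\Sigma X_s\ \vee\ \bigvee_{s\neq0}S^1,\]
and likewise for $S$, where each $X_s$ is replaced by a point.

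Finally, since each $X_s$ is connected and acyclic, $\Sigma X_s$ is simply-connected with vanishing reduced homology, hence contractible by the Hurewicz and Whitehead theorems. Under the splitting above, $\Sigma q$ becomes, up to homotopy, the wedge of the collapse maps $\Sigma X_s\to\ast$ together with the identity on each $S^1$ summand; as every $\Sigma X_s\simeq\ast$ this is a homotopy equivalence. Proposition~\ref{invariance} then gives $Z_K(CX,X)\simeq Z_K(CS,S)$. The only delicate point is the one already flagged, namely that the disconnectedness of $X$ forces $\Sigma X$ and $\Sigma S$ to be non-simply-connected; the naturality of the disjoint-union suspension splitting is exactly what replaces the (unavailable) homology-plus-Whitehead argument.
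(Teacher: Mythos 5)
Your proof is correct and follows the same route as the paper: the paper's proof simply asserts that the projection $X\to\pi_0(X)$ becomes a homotopy equivalence after suspension and then invokes Proposition \ref{invariance}. You supply the details of that assertion (via the wedge splitting of the suspension of a disjoint union and the contractibility of $\Sigma X_s$ for each acyclic component), which the paper leaves implicit.
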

\begin{proof}
	Under our hypothesis the natural projection $X\to S=\pi_0(X)$ is a homotopy equivalence after suspension. The corollary follows from Proposition \ref{invariance}.
\end{proof}


\subsection{The $p$-local and the integral cases}

Let $p$ be a fixed prime. We first consider the $p$-local case. A subset $M$ of $[m]$ with $|M|\ge 2$ is called a \hadgesh{minimal non-face} of $K$ if it is not a simplex of $K$ and all proper subsets of $M$ are simplices of $K$.

\begin{lemma}
  \label{discrete}
  Let $X$ be a disjoint union of spaces of finite $p$-local type, and assume that $K$ is not a simplex. If the universal cover of $Z_K(CX,X)$ belongs to $\PP_p$, then each connected component $X_0$ of $X$ is $p$-locally acyclic.
\end{lemma}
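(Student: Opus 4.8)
The plan is to restrict to a self-join over a minimal non-face, cut down to a single path-component, and then read off the acyclicity of that component from the finiteness and suspension structure that membership in $\PP_p$ imposes. Since $K$ is not a simplex it has a minimal non-face $M$ with $|M|=\ell\ge 2$, and the induced subcomplex $K_M$ is the boundary $\partial\Delta^{\ell-1}$. As the polyhedral product over the boundary of a simplex is the iterated join, $Z_{K_M}(CX,X)\cong X^{*\ell}$, the $\ell$-fold self-join of $X$, and by Lemma \ref{full subcomplex} it is a homotopy retract of $Z_K(CX,X)$. Homotopy retracts of connected spaces are inherited by universal covers: lifting a section $i$, a retraction $r$, and a homotopy $r\circ i\simeq 1$ to the simply-connected covers and matching basepoints exhibits $\widetilde{X^{*\ell}}$ as a homotopy retract of $\widetilde{Z_K(CX,X)}\in\PP_p$; since $\PP_p$ is closed under homotopy retracts, $\widetilde{X^{*\ell}}\in\PP_p$.

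\emph{The case $\ell\ge 3$.} Here $X^{*\ell}\simeq\Sigma^{\ell-1}(X^{\wedge\ell})$ is at least a double suspension, hence simply connected and equal to its own universal cover. Being a suspension lying in $\PP_p$, Corollary \ref{acyclic p-local} (applied to the remaining $(\ell-1)$-fold suspension) forces $X^{\wedge\ell}$ to be $p$-locally acyclic. From $\widetilde{H}_*(X^{\wedge\ell};\Z/p)\cong\widetilde{H}_*(X;\Z/p)^{\otimes\ell}$ and the fact that a tensor power of a nonzero graded vector space is nonzero, we obtain $\widetilde{H}_*(X;\Z/p)=0$; in particular $X$ is connected, and as it is of finite $p$-local type it is $p$-locally acyclic, so its unique component satisfies the claim.

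\emph{The case $\ell=2$.} Now $X*X\simeq\Sigma(X\wedge X)$ need not be simply connected once $X$ is disconnected, so I would isolate a single component $X_0$. If $X_0$ contains the basepoint it is a based retract of $X$ (collapse the other components), so $X_0*X_0=\Sigma(X_0\wedge X_0)$ is a simply-connected retract of $X*X$ lying in $\PP_p$, and Corollary \ref{acyclic p-local} makes $X_0\wedge X_0$, and therefore $X_0$, $p$-locally acyclic. If $X_0$ does not contain the basepoint, then $(X_0)_+$ is a based retract of $X$, so $(X_0)_+*(X_0)_+\simeq\Sigma(X_0\times X_0)\vee S^1$ is a retract of $X*X$, and its universal cover again lies in $\PP_p$.

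\emph{The main obstacle.} The delicate point is this last space: it has fundamental group $\Z$, and its universal cover is homotopy equivalent to $\Sigma\bigvee_{n\in\Z}(X_0\times X_0)$. Because every member of $\PP_p$ is of finite $p$-local type, the infinite sum $\bigoplus_{n\in\Z}\widetilde{H}_*\bigl(\Sigma(X_0\times X_0);\Z_{(p)}\bigr)$ can be finitely generated in each degree only if $X_0\times X_0$ is $p$-locally acyclic; since $X_0$ is a retract of $X_0\times X_0$, this yields $\widetilde{H}_*(X_0;\Z_{(p)})=0$. I expect the two genuine difficulties to be the basepoint bookkeeping that forces one to see $(X_0)_+$ rather than $X_0$ for components away from the basepoint, and the recognition that in the disconnected $\ell=2$ situation it is the finiteness of the $p$-local type, and not any acyclicity input, that annihilates the homology of $X_0$.
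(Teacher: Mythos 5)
Your proposal is correct, and its core is the same as the paper's: restrict along Lemma \ref{full subcomplex} to the join $X^{*\ell}$ over a minimal non-face, pass to universal covers to land in $\PP_p$, and feed the resulting suspension into Corollary \ref{acyclic p-local} plus the K\"unneth theorem. The difference is that the paper runs this uniformly for every component $X_0$ and every $\ell\ge 2$ at once, with no case analysis: since the join is functorial for \emph{unbased} maps, the unbased retraction $X\to X_0$ (identity on $X_0$, constant elsewhere) makes $X_0^{*\ell}$ a retract of $X^{*\ell}$ regardless of where the basepoint sits, and $X_0^{*\ell}\simeq\Sigma^{\ell-1}X_0^{\wedge\ell}$ is simply connected for all $\ell\ge2$ because $X_0$ is connected. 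So the basepoint bookkeeping you identify as the main obstacle dissolves if you only smash after joining. That said, your detour through $(X_0)_+$ for non-basepoint components is valid and resolves the case by a genuinely different mechanism: the universal cover of $\Sigma(X_0\times X_0)\vee S^1$ is an infinite wedge $\bigvee_{n\in\Z}\Sigma(X_0\times X_0)$, and the finite $p$-local type of universal covers of spaces in $\PP_p$ kills $\widetilde{H}_*(X_0\times X_0;\Z_{(p)})$. This is exactly the style of argument the paper deploys elsewhere (for instance the $S^{|M|-1}$ and $S^m_{(0)}\vee S^n_{(0)}$ retracts in Theorems \ref{Z(CX,X) p-local} and \ref{Z(CX,X) rational}), so it buys nothing extra here, but it is a sound and self-contained alternative; your $\ell\ge3$ analysis also correctly yields the stronger conclusion that $X$ itself is connected and $p$-locally acyclic when $K$ has a large minimal non-face.
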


\begin{proof}
  Since $K$ is assumed not to be a  simplex, it has a minimal non-face  $M$. Let $L$ be the boundary of a simplex with vertex set $M$. Then $L=K_M$, and so by Lemma \ref{full subcomplex}, $Z_L(CX,X)$ is a retract of $Z_K(CX,X)$. It is easy to see that
  $$Z_L(CX,X)=\underbrace{X*\cdots*X}_{|M|}\simeq\Sigma^{|M|-1}X^{\wedge|M|}.$$
 Let $X_0$ be a path component of $X$. In particular $X_0$ is a retract of $X$, and so   $\Sigma^{|M|-1}X_0^{\wedge|M|}$ is a homotopy retract of $Z_L(CX,X)$ and hence of $Z_K(CX,X)$. Since $\Sigma^{|M|-1}X_0^{\wedge|M|}$ is simply-connected, it is also a homotopy retract of the universal cover of $Z_K(CX,X)$, hence $\Sigma^{|M|-1}X_0^{\wedge|M|}\in\PP_p$. Thus by Corollary \ref{acyclic p-local}, $X_0^{\wedge|M|}$ is $p$-locally acyclic and by the K\"unneth formula $X_0$ is $p$-locally acyclic, as claimed.
\end{proof}

A flag complex is a simplicial complex $K$ that is generated by its 1-skeleton, namely where for every clique in the 1-skeleton of $K$ there is a simplex in $K$. Thus a simplicial complex $K$ is a flag complex if and only if all of its minimal non-faces have cardinality two. The following lemma is proved in \cite[Proposition 4.2]{K} (cf. \cite{BBC}). We note that there is a terminology error in the proof of \cite[Proposition 4.2]{K}: "acyclic space" should be replaced with "aspherical space".

\begin{lemma}
  \label{K(pi,1)}
  Let $(\underline{X},*)=\{(X_i,*)\}_{i=1}^m$. Then $Z_K(\underline{X},*)$ is a non-trivial $K(\pi,1)$ if and only if each $X_i$ is a non-trivial $K(\pi,1)$ and $K$ is a flag complex.
\end{lemma}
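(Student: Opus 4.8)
The plan is to establish the two implications separately, using the retract and fibration machinery already developed for the ``only if'' direction and an induction on the number of vertices for the ``if'' direction. For the forward direction, suppose $Z_K(\underline{X},*)$ is a non-trivial $K(\pi,1)$. Taking $I=\{i\}$ in Lemma \ref{full subcomplex} exhibits each $X_i$ as a retract of $Z_K(\underline{X},*)$, and a retract of a $K(\pi,1)$ is again a $K(\pi,1)$; so I may write $X_i\simeq BG_i$ with $G_i=\pi_1(X_i)$. The one delicate point is the non-triviality of each $X_i$: a contractible factor is invisible to the polyhedral product (if $X_i\simeq *$ then $Z_K(\underline{X},*)\simeq Z_{\dl_K(i)}(\cdots)$), so I would either take non-triviality of the $X_i$ as a standing hypothesis or delete such vertices at the outset.

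To see that $K$ is flag, suppose otherwise and pick a minimal non-face $M$ with $k=|M|\ge 3$, so that $K_M=\partial\Delta_M$. By Lemma \ref{full subcomplex} the retract $Z_{K_M}(\underline{X}_M,*)$ is again a $K(\pi,1)$. On the other hand Proposition \ref{fibration}, applied with $A_i=*$ so that the homotopy fibre of $*\to X_i$ is $\Omega X_i\simeq G_i$, gives a homotopy fibration
$$Z_{K_M}(C\underline{\Omega X}_M,\underline{\Omega X}_M)\to Z_{K_M}(\underline{X}_M,*)\to\prod_{i\in M}X_i.$$
Using Proposition \ref{invariance} to replace each $\Omega X_i$ by the discrete group $G_i$, the fibre is the $k$-fold join $G_{i_1}*\cdots*G_{i_k}\simeq\Sigma^{k-1}\bigwedge_{i\in M}G_i$, exactly as computed in the proof of Lemma \ref{discrete}. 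Since each $G_i$ is a discrete set with at least two points, this fibre is a non-empty wedge of $(k-1)$-spheres with $k-1\ge 2$; as the base is aspherical, the homotopy exact sequence forces $\pi_{k-1}(Z_{K_M}(\underline{X}_M,*))\cong\pi_{k-1}(\bigvee S^{k-1})\neq 0$, contradicting asphericity. Hence $K$ is flag.

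For the converse, assume each $X_i\simeq BG_i$ is a non-trivial $K(\pi,1)$ and $K$ is flag, and argue by induction on $m$, the case $m=1$ being immediate since $Z_K(X_1,*)=X_1$. For a vertex $v$, the simplicial pushout $K=\dl_K(v)\cup_{\lk_K(v)}(\lk_K(v)*\{v\})$ yields, via Lemma \ref{join}, a homotopy pushout
$$\xymatrix{ Z_{\lk_K(v)}(\underline{X},*)\ar[r]\ar[d] & Z_{\lk_K(v)}(\underline{X},*)\times X_v\ar[d] \\ Z_{\dl_K(v)}(\underline{X},*)\ar[r] & Z_K(\underline{X},*). }$$
For a flag complex $\lk_K(v)$ is the full subcomplex of $\dl_K(v)$ on the neighbours of $v$, so both $\dl_K(v)$ and $\lk_K(v)$ are flag complexes on fewer vertices and, by induction, all three corners other than $Z_K(\underline{X},*)$ are aspherical. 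The left vertical map is a retract inclusion by Lemma \ref{full subcomplex}, hence $\pi_1$-injective, and the top map $y\mapsto(y,*)$ is a split inclusion, hence also $\pi_1$-injective. I would then conclude by invoking the classical fact that a homotopy pushout of aspherical spaces along $\pi_1$-injective cofibrations is aspherical: its fundamental group is the corresponding amalgam and its universal cover is a tree of contractible spaces, hence contractible.

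The main obstacle is precisely this asphericity-of-amalgam input in the inductive step. Flagness is exactly what makes it available, since it guarantees that $\lk_K(v)$ is a \emph{full} subcomplex of $\dl_K(v)$ and therefore supplies the $\pi_1$-injectivity the gluing theorem demands; without it the link inclusion need not be $\pi_1$-injective and the argument collapses. This is also the precise place where one must reason with asphericity rather than acyclicity — the analogous gluing statement is false for acyclic spaces — which is the distinction underlying the erratum to \cite{K} noted above.
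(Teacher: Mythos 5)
Your proof is correct. Note first that the paper does not actually prove this lemma: it is quoted from \cite[Proposition 4.2]{K} (cf.\ \cite{BBC}), so there is no internal argument to compare against; your write-up is essentially the standard proof and almost certainly close to the cited one. The forward direction is sound: each $X_i$ is a retract of $Z_K(\underline{X},*)$ by Lemma \ref{full subcomplex}, hence aspherical, and if $K$ had a minimal non-face $M$ with $|M|\ge 3$, then the fibration of Proposition \ref{fibration} over $\prod_{i\in M}X_i$ would have fibre a non-empty wedge of $(|M|-1)$-spheres (the join $G_{i_1}*\cdots*G_{i_{|M|}}$ of non-trivial discrete groups, after Proposition \ref{invariance}), forcing a non-zero higher homotopy group of the aspherical retract $Z_{K_M}(\underline{X}_M,*)$ --- a contradiction. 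Your converse, by induction over vertices using the pushout of $Z_{\dl_K(v)}(\underline{X},*)$ and $Z_{\lk_K(v)}(\underline{X},*)\times X_v$ along $Z_{\lk_K(v)}(\underline{X},*)$ together with Whitehead's theorem on asphericity of adjunction spaces along $\pi_1$-injective cofibrations, is also correct; the observation that flagness makes $\lk_K(v)$ a \emph{full} subcomplex of $\dl_K(v)$, so that Lemma \ref{full subcomplex} supplies the required $\pi_1$-injectivity, is exactly the right use of the hypothesis, and your remark that the gluing theorem concerns asphericity rather than acyclicity matches the terminological correction the paper records for \cite{K}. The one caveat --- which you yourself flag --- is that the biconditional as literally stated fails if some $X_i$ is contractible, since such a factor is invisible in $Z_K(\underline{X},*)$; one must read the lemma with the standing assumption that the $X_i$ are connected and non-contractible (or delete such vertices at the outset), and with that reading your argument is complete.
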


\begin{theorem}
  \label{Z(CX,X) p-local}
  If $X$ is a disjoint union of  spaces of finite $p$-local type, then the following are equivalent:
  \begin{enumerate}
    \item the universal cover of $Z_K(CX,X)$ belongs to $\PP_p$;
    \item $Z_K(CX,X)$ is a $K(\pi,1)$;
    \item either of the following conditions holds:
    \begin{enumerate}
      \item $K$ is a simplex;
      \item $X$ is $p$-locally acyclic and $K$ is not a simplex;
      \item $X$ is a disjoint union of $p$-locally acyclic spaces and $K$ is a flag complex which is not a simplex.
    \end{enumerate}
  \end{enumerate}
\end{theorem}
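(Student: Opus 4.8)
The plan is to prove the cyclic chain of implications $(2)\Rightarrow(1)\Rightarrow(3)\Rightarrow(2)$. The implication $(2)\Rightarrow(1)$ is immediate: if $Z_K(CX,X)$ is a $K(\pi,1)$ then its universal cover is weakly contractible, and a contractible space lies trivially in $\PP_p$.

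For $(1)\Rightarrow(3)$ I would argue by cases on $K$. If $K$ is a simplex then (a) holds and there is nothing to do, so assume $K$ is not a simplex. Then Lemma \ref{discrete} applies and shows that every connected component of $X$ is $p$-locally acyclic; if $X$ is connected this is precisely condition (b). If $X$ is disconnected, the components are still $p$-locally acyclic, so it remains only to prove that $K$ is a flag complex, which then yields (c). To do so I would assume for contradiction that $K$ has a minimal non-face $M$ with $|M|\ge 3$, so that $K_M$ is the boundary of the simplex on $M$. By Lemma \ref{full subcomplex}, $Z_{K_M}(CX,X)$ is a retract of $Z_K(CX,X)$, and $Z_{K_M}(CX,X)=X^{*|M|}\simeq\Sigma^{|M|-1}X^{\wedge|M|}$. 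Because $|M|\ge 3$ this join is $(|M|-2)$-connected, hence simply-connected, and therefore it is in fact a retract of the universal cover of $Z_K(CX,X)$; consequently its $p$-localisation is a retract of that universal cover and so lies in $\PP_p$. Writing it as $\Sigma Y$ with $Y=\Sigma^{|M|-2}X^{\wedge|M|}$ (a connected space, as $|M|-2\ge 1$) and applying Corollary \ref{acyclic p-local}, we conclude that $X^{\wedge|M|}$ is $p$-locally acyclic. But $\widetilde H_0(X^{\wedge|M|};\Z_{(p)})\cong\widetilde H_0(X;\Z_{(p)})^{\otimes|M|}$, which is nonzero since $X$ is disconnected — a contradiction. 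Hence every minimal non-face has cardinality two, i.e. $K$ is flag, giving (c). The simple-connectivity step, valid precisely when $|M|\ge 3$, is exactly what converts ``$K$ not flag'' into a contradiction and is the crux of this direction.

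For $(3)\Rightarrow(2)$ I would first note that the standing hypothesis — $X$ a disjoint union of spaces of finite $p$-local type — upgrades ``$p$-locally acyclic'' to genuinely ``acyclic'': if $\widetilde H_n(X_0;\Z)$ is a finitely generated $\Z_{(p)}$-module with $\widetilde H_n(X_0;\Z)\otimes\Z_{(p)}=0$, then $\widetilde H_n(X_0;\Z)=0$. In case (a), $K$ a simplex gives $Z_K(CX,X)=(CX)^m\simeq *$, a (trivial) $K(\pi,1)$. In cases (b) and (c) the components of $X$ are genuinely acyclic, so Corollary \ref{cor-acyclic} yields $Z_K(CX,X)\simeq Z_K(CS,S)$ with $S=\pi_0(X)$; when $S$ is a point (case (b)) this is contractible. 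When $X$ is disconnected (case (c)) I would choose a discrete group $G$ with $|G|=|S|$, so $G\neq 1$ and $\Omega BG\simeq G\simeq S$. Applying Proposition \ref{fibration} to the pairs $(BG,*)$, together with Proposition \ref{invariance}, produces a homotopy fibration
$$Z_K(CS,S)\simeq Z_K(C\underline{G},\underline{G})\to Z_K(\underline{BG},*)\to\prod_{i=1}^m BG.$$
Since $K$ is flag and $BG$ is a non-trivial $K(\pi,1)$, Lemma \ref{K(pi,1)} shows the total space is a $K(\pi,1)$, and the base $\prod BG$ is obviously one; the long exact homotopy sequence then forces the fibre $Z_K(CS,S)$ to be a $K(\pi,1)$. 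This establishes (2) in all cases and closes the cycle.

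The main obstacle I anticipate is the disconnected subcase of $(1)\Rightarrow(3)$: producing a genuinely simply-connected retract from a minimal non-face of size at least three and feeding it into the acyclicity machinery (Corollary \ref{acyclic p-local}, ultimately Proposition \ref{FHL}) to exclude non-flag $K$. The realisation of $Z_K(CS,S)$ as the homotopy fibre of a map between aspherical polyhedral products in $(3)\Rightarrow(2)$ is the other delicate point, mainly in checking the hypotheses of Propositions \ref{fibration} and \ref{invariance} and the identification $\Omega BG\simeq G\simeq S$.
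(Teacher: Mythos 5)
Your proposal is correct and follows essentially the same route as the paper: Lemma \ref{discrete} for the $p$-local acyclicity of the components, the reduction to a discrete $X$ via Proposition \ref{invariance}, and the fibration $Z_K(CG,G)\to Z_K(BG,*)\to \prod BG$ together with Lemma \ref{K(pi,1)} for $(3)\Rightarrow(2)$. The one point of divergence is the exclusion of non-flag $K$ when $X$ is disconnected: you feed the whole retract $\Sigma^{|M|-1}X^{\wedge|M|}$ into Corollary \ref{acyclic p-local} and extract a contradiction from $\widetilde{H}_0$ of the smash power, whereas the paper argues more directly that the simply-connected retract $S^{|M|-1}$ (coming from $S^0\subset X$) is not $p$-local and hence cannot be a homotopy retract of a space in $\PP_p$; both arguments are valid, yours costing an extra appeal to the loop-space homology machinery behind Proposition \ref{FHL} that the paper's shortcut avoids.
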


\begin{proof}
  Suppose that $Z_K(CX,X)$ belongs to $\PP_p$.  If $K$ is not a simplex, then by Lemma \ref{discrete}, each connected component of $X$ is $p$-locally acyclic. Suppose that $X$ has more than a single connected component. Then $S^0$ is a retract of $X$. If $K$ is not a flag complex, then it has a minimal non-face $M$ of cardinality $\ge 3$.  By analogy to the proof of Lemma \ref{discrete}, $S^{|M|-1}$, which is simply connected for $|M|\geq 3$,  is a homotopy retract of the universal cover of $Z_K(CX,X)$, which is a $p$-local space of finite $p$-local type by hypothesis. Bu the sphere $S^{|M|-1}$ is not $p$-local, and hence cannot be a homotopy retract of a $p$-local space. This gives a contradiction, and so   $K$ must be a flag complex. This shows that (1) implies (3).

  Next we show that (3) implies (2). If $K$ is a simplex, then $Z_K(CX,X)\simeq*$ and the claim holds vacuously. By hypothesis $X$ is of finite $p$-local type, and hence $\Sigma X$ is $p$-local. Thus, if $X$ is $p$-locally acyclic, then $\Sigma X$ is contractible. Therefore, in this case $Z_K(CX,X)\simeq*$  by Proposition \ref{invariance}. Suppose now that $X$ is a disjoint union of $p$-locally acyclic spaces and that $K$ is a flag complex. Since the suspension of every connected component of $X$ is contractible, Proposition \ref{invariance} implies that we may assume $X$ is discrete.  With this assumption, let $G$ be a discrete group with the same cardinality as that of $X$. Then $Z_K(CX,X)\cong Z_K(CG,G)$, and by Lemma \ref{fibration}, there is a homotopy fibration
  $$Z_K(CX,X)\to Z_K(BG,*)\to BG^m.$$
By Lemma \ref{K(pi,1)},  $Z_K(BG,*)$ is a $K(\pi,1)$ by Lemma \ref{K(pi,1)}. Thus $Z_K(CX,X)$ is a $K(\pi,1)$, and hence (3) implies (2). Clearly, (2) implies (1), and so the proof is complete.

\end{proof}

\begin{remark}\label{Rem-discrete-group-trick}
	The fact that the homotopy type of $Z_K(CG, G)$ depends only on the cardinality of $G$ and not on the isomorphism class of $G$ is the main ingredient in the proof of Theorem \ref{application} in Section \ref*{Groups}
\end{remark}

We next consider the integral analog of Theorem \ref{Z(CX,X) p-local}.

\begin{theorem}
  \label{Z(CX,X)}
  If $X$ is a disjoint union of spaces of finite type, then the following are equivalent:
  \begin{enumerate}
    \item $Z_K(CX,X)$ belongs to $\PP$;
    \item $Z_K(CX,X)$ is a $K(\pi,1)$;
    \item either of the following conditions holds:
    \begin{enumerate}
      \item $K$ is a simplex;
      \item $X$ is an acyclic space and $K$ is not a simplex;
      \item $X$ is a disjoint union of acyclic spaces and $K$ is a flag complex that is not a simplex.
    \end{enumerate}
  \end{enumerate}
\end{theorem}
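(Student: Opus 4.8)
The plan is to establish the cycle of implications $(1)\Rightarrow(3)\Rightarrow(2)\Rightarrow(1)$, following the template of the $p$-local Theorem~\ref{Z(CX,X) p-local} but replacing the $p$-local acyclicity input (Corollary~\ref{acyclic p-local}, used through Lemma~\ref{discrete}) by its integral counterpart Corollary~\ref{acyclic}, which applies because every connected component of $X$ is assumed to be of finite type. The implication $(2)\Rightarrow(1)$ is immediate: a $K(\pi,1)$ has weakly contractible universal cover, which carries the trivial finite type generalised Postnikov tower, so $Z_K(CX,X)\in\PP$.

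For $(1)\Rightarrow(3)$, I would assume $Z_K(CX,X)\in\PP$ and that $K$ is not a simplex (otherwise $(a)$ holds). First I would reprove the integral analogue of Lemma~\ref{discrete}: choosing a minimal non-face $M$ of $K$ and setting $L=K_M$, the boundary of the simplex on $M$, Lemma~\ref{full subcomplex} exhibits $Z_L(CX,X)\simeq\Sigma^{|M|-1}X^{\wedge|M|}$ as a retract of $Z_K(CX,X)$; passing to a path component $X_0$ of $X$ gives $\Sigma^{|M|-1}X_0^{\wedge|M|}$ as a homotopy retract, and since $|M|\ge 2$ this space is simply connected, hence a homotopy retract of the universal cover and so in $\PP$. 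Writing it as the suspension of the connected finite type space $\Sigma^{|M|-2}X_0^{\wedge|M|}$, Corollary~\ref{acyclic} forces $X_0^{\wedge|M|}$ to be acyclic, whence $X_0$ is acyclic by the K\"unneth theorem. If $X$ is connected this already yields $(b)$. If $X$ has at least two components, then $S^0$ is a retract of $X$; were $K$ not a flag complex it would have a minimal non-face $M$ with $|M|\ge 3$, and the same retraction argument would make $S^{|M|-1}$ a homotopy retract of the universal cover, hence an element of $\PP$, contradicting Corollary~\ref{acyclic} since $S^{|M|-1}=\Sigma S^{|M|-2}$ with $S^{|M|-2}$ a non-acyclic connected finite type space. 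Thus $K$ is a flag complex and $(c)$ holds.

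For $(3)\Rightarrow(2)$ the argument runs exactly as in the $p$-local case. In cases $(a)$ and $(b)$ the space $\Sigma X$ is contractible (trivially in $(a)$, and in $(b)$ because $X$ is connected, acyclic and of finite type), so Proposition~\ref{invariance} applied to the map $X\to *$ gives $Z_K(CX,X)\simeq *$, a $K(\pi,1)$. In case $(c)$, Corollary~\ref{cor-acyclic} reduces the problem to $Z_K(CS,S)$ with $S=\pi_0(X)$ discrete; replacing $S$ by a discrete group $G$ of the same cardinality gives $Z_K(CX,X)\cong Z_K(CG,G)$, and Lemma~\ref{fibration} produces a homotopy fibration $Z_K(CG,G)\to Z_K(BG,*)\to BG^m$ whose base and, by Lemma~\ref{K(pi,1)} together with the flagness of $K$, total space are both of type $K(\pi,1)$, forcing the fibre $Z_K(CX,X)$ to be a $K(\pi,1)$ as well. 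I expect the only genuine subtlety to lie in the $(1)\Rightarrow(3)$ acyclicity step: one must check that the single integral statement Corollary~\ref{acyclic} really does the work of the prime-by-prime reasoning behind Lemma~\ref{discrete}, and that the bookkeeping for a \emph{disconnected} $X$ — in particular the passage from a homotopy retract of $Z_K(CX,X)$ to a homotopy retract of its universal cover — is handled correctly.
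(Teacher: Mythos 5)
Your proof is correct and follows essentially the same route as the paper: the only cosmetic difference is that for $(1)\Rightarrow(3)$ the paper obtains integral acyclicity by $p$-localising the universal cover and applying Lemma \ref{discrete} at every prime before invoking finite type, whereas you cite Corollary \ref{acyclic} directly, which amounts to the same argument since that corollary is itself proved prime by prime. One harmless slip: in case (a) there is no hypothesis on $X$, so $\Sigma X$ need not be contractible, but $Z_K(CX,X)\simeq{*}$ there anyway because $K$ is a simplex, so the conclusion stands.
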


\begin{proof}
  If $Z_K(CX,X)$ belongs to $\PP$, then in particular the $p$-localization of its universal cover belongs to $\PP_p$ for every prime $p$, and hence by Lemma \ref{discrete} each path component of $X$ is $p$-locally acyclic for all primes $p$. Since each connected component of $X$ is of finite type, this implies that $X$ is a disjoint union of acyclic spaces. Hence similarly to the proof of Theorem \ref{Z(CX,X) p-local}, it follows that  (1) implies (3). The for (3) implying (2) is done in the same way as the proof of Theorem \ref{Z(CX,X) p-local}. Clearly, (2) implies (1). Thus the proof is complete.
\end{proof}


\subsection{The rational case}

Finally we consider the rational case. We begin with a simple lemma.

\begin{lemma}
  \label{wedge of spheres}
  If $X,Y$ are suspensions such that $X$ is simply-connected and $X,Y$ are not rationally contractible, then $X\vee Y$ does not belong to $\PP_0$.
\end{lemma}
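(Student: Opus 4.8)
The plan is to assume $X\vee Y\in\PP_0$ and derive a contradiction, splitting the argument according to whether $Y$ is simply-connected. The guiding principle is that membership in $\PP_0$ forces the rational homotopy Lie algebra of a simply-connected space to be solvable, whereas a wedge produces a free product of Lie algebras, which is never solvable. The non-simply-connected case is disposed of separately by a finite-type obstruction on the universal cover. I would stress at the outset that, unlike the $p$-local case, one cannot argue through Corollary \ref{acyclic p-local}: its rational analogue is \emph{false}, since rationally elliptic spaces such as spheres lie in $\PP_0$ without being rationally acyclic. This is exactly why a genuinely rational argument is needed here.

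First I would treat the case where $Y$ is not simply-connected. Since $Y$ is a suspension, van Kampen's theorem shows that $\pi_1(Y)$ is a free group, so if it is non-trivial it is infinite; as $X$ is simply-connected, $\pi_1(X\vee Y)\cong\pi_1(Y)$ is then infinite. The universal cover of $X\vee Y$ is obtained from the universal cover of $Y$ by attaching a copy of the (simply-connected, hence trivially covered) space $X$ at each of the $|\pi_1(Y)|$ points lying over the wedge point. Fixing a degree $n$ with $\widetilde H_n(X;\QQ)\neq 0$, the images of a single non-zero class under the distinct inclusions $X\hookrightarrow\widetilde{X\vee Y}$ are linearly independent, each being detected by the retraction collapsing the complementary copies of $X$ together with $\widetilde Y$. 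Hence $\widetilde H_n(\widetilde{X\vee Y};\QQ)$ is infinite-dimensional, contradicting the fact that the universal cover of any space in $\PP_0$ is of finite rational type.

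It then remains to treat the case where $Y$, and hence $X\vee Y$, is simply-connected. Here I would use that a simply-connected suspension which is not rationally contractible is rationally a wedge of spheres, each of dimension at least $2$; selecting one sphere from $X$ and one from $Y$ yields a homotopy retraction $S^m\vee S^n\to X\vee Y\to S^m\vee S^n$ with $m,n\ge 2$. Since $\PP_0$ is closed under homotopy retracts, $S^m\vee S^n\in\PP_0$. Writing $W$ for a simply-connected rational GEM-tower of finite length of which $S^m\vee S^n$ is a retract, the rational counterpart of \cite[Theorem 6.2]{MS} (the input already used rationally in the proof of Proposition \ref{FHL}) shows that $H_*(\Omega W;\QQ)$ is the universal enveloping algebra of a \emph{solvable} Lie algebra, and solvability passes to the retract $H_*(\Omega(S^m\vee S^n);\QQ)$. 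On the other hand, since $S^m\vee S^n=\Sigma(S^{m-1}\vee S^{n-1})$ is a suspension, the Bott--Samelson theorem identifies $H_*(\Omega(S^m\vee S^n);\QQ)$ with the tensor algebra $T(x,y)=U(\mathbb{L}(x,y))$ on two generators, whose rational homotopy Lie algebra $\mathbb{L}(x,y)$ is free on two generators and therefore not solvable. This contradiction finishes the proof.

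The main obstacle I anticipate is the simply-connected step: one must make the dichotomy \emph{solvable versus free} precise at the level of rational homotopy Lie algebras, and in particular confirm the rational version of the solvability statement for finite generalised Postnikov towers that underlies $\PP_0$. The reduction to a wedge of two spheres via the co-$H$ structure is what makes the non-solvability transparent, the free Lie algebra on two generators being the standard example of a non-solvable (rationally hyperbolic) homotopy Lie algebra; this is also what justifies the label of the lemma.
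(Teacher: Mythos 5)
Your argument is correct, and it rests on the same two obstructions as the paper's proof, but the route through them differs enough to be worth comparing. For the decisive simply-connected case the paper argues directly that $\pi_*(X\vee Y)$ carries rationally non-trivial iterated Whitehead products of arbitrary length, which contradicts the bound on Whitehead length imposed by a finite rational generalised Postnikov tower (the rational analogue of \cite[Proposition 4.6]{IK}); you instead reduce via \cite[Theorem 24.5]{FHT} to a retract $S^m_{(0)}\vee S^n_{(0)}$ and play off the solvability of the loop-space homology Lie algebra of a finite GEM tower (the rational analogue of \cite[Theorem 6.2]{MS}, exactly as in Proposition \ref{FHL}) against the freeness of the homotopy Lie algebra $\mathbb{L}(x,y)$ supplied by Bott--Samelson. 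These are two formulations of the same dichotomy --- bounded bracket length versus a free Lie algebra on two generators --- and each version invokes an unproved ``rational analogue'' of a cited result, so neither is more complete than the other; your reduction to two spheres does make the non-solvability entirely transparent. In the remaining case you split by simple connectivity of $Y$ rather than by whether $Y$ is a $K(\pi,1)$, and you detect the failure of finite rational type of the universal cover in homology rather than in $\pi_*\otimes\QQ$; the underlying idea (infinitely many copies of $X$ sitting over the wedge point, since $\pi_1(Y)$ is free and hence infinite when non-trivial) is the same, and your division of cases is arguably cleaner, since it avoids having to exhibit long rationally non-trivial Whitehead products when $Y$ is neither simply connected nor aspherical. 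Your opening observation that Corollary \ref{acyclic p-local} has no rational analogue is also correct and is precisely the reason condition (3)(d) appears in Theorem \ref{Z(CX,X) rational}.
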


\begin{proof}
  If $Y$ is a $K(\pi,1)$, then $\pi$ is infinite, and so $\pi_i(X\vee Y)\otimes\mathbb{Q}$ is not a finite dimensional $\mathbb{Q}$-vector space for some $i$. Thus $X\vee Y\not\in\PP_0$.

  Suppose $Y$ is not a $K(\pi,1)$ and $X\vee Y\in\PP_0$. Then $X\vee Y$ is a homotopy retract of a space $Z$ whose universal cover admits a finite type rational generalised Postnikov tower of finite length. By the rational analog of \cite[Proposition 4.6]{IK}, the length of non-trivial iterated Whitehead products in $\pi_*(Z)$ is bounded by the length of of a rational generalised Postnikov tower for $Z$. On the other hand, $\pi_*(X\vee Y)$ has rationally non-trivial iterated Whitehead products of arbitrary length, implying so does $\pi_*(Z)$. This is a contradiction, and so $X\vee Y\not\in\PP_0$.
\end{proof}

The following is proved in \cite[Proposition 4.1]{HST}.

\begin{lemma}
  \label{mutually disjoint}
  Suppose $K$ has minimal non-faces $M_1,M_2$ such that $M_1\cap M_2\ne\emptyset$. Then $\Sigma^{|M_1|-1}X^{\wedge|M_1|}\vee\Sigma^{|M_2|-1}X^{\wedge|M_2|}$ is a homotopy retract of $Z_K(CX,X)$.
\end{lemma}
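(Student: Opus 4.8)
The plan is to exploit the stable splitting \eqref{BBCG} together with the retraction on full subcomplexes from Lemma \ref{full subcomplex}. First I would set $I = M_1 \cup M_2$ and pass to the full subcomplex $K_I$, which is a retract of $Z_K(CX,X)$ by Lemma \ref{full subcomplex}; since $M_1$ and $M_2$ are minimal non-faces contained in $I$, they remain minimal non-faces of $K_I$, so it suffices to produce the wedge as a retract of $Z_{K_I}(CX,X)$. This lets me assume that the vertex set of $K$ is exactly $M_1 \cup M_2$.

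The key observation is that a minimal non-face $M$ of $K$ contributes a wedge summand $|\Sigma K_M| \wedge \widehat{X}^M$ to the stable splitting \eqref{BBCG}, where $K_M$ is the boundary of the simplex on $M$, so $|\Sigma K_M| \simeq |\Sigma \partial \Delta^{|M|-1}| \simeq S^{|M|-1}$ and the summand is $\Sigma^{|M|-1} X^{\wedge |M|}$. Thus, unstably, each $\Sigma^{|M_j|-1} X^{\wedge |M_j|}$ is the polyhedral product $Z_{\partial \Delta^{M_j}}(CX,X)$ over the boundary of the simplex on $M_j$, and this is a retract of $Z_K(CX,X)$ by Lemma \ref{full subcomplex} applied to $M_j$ (exactly as in the proof of Lemma \ref{discrete}). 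The real content is to obtain the two summands \emph{simultaneously} as a single wedge retract, rather than each one separately.

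To do this I would build an explicit retraction realising the wedge. On the one hand there is a map
\[
Z_{\partial\Delta^{M_1}}(CX,X) \vee Z_{\partial\Delta^{M_2}}(CX,X) \longrightarrow Z_K(CX,X)
\]
induced by the two inclusions $K_{M_j} \hookrightarrow K$ together with the fold of basepoints. On the other hand I would construct a retraction back onto the wedge: collapsing $Z_K(CX,X)$ onto the union of the two full subcomplexes $Z_{K_{M_1}} \cup Z_{K_{M_2}}$, and then collapsing the intersection $Z_{K_{M_1 \cap M_2}}(CX,X)$ to a point. Here the hypothesis $M_1 \cap M_2 \neq \emptyset$ is precisely what guarantees that the intersection subcomplex $K_{M_1} \cap K_{M_2} = K_{M_1\cap M_2}$ is a \emph{simplex} (a proper subset of each minimal non-face is a face), so that $Z_{K_{M_1\cap M_2}}(CX,X) \simeq *$ and collapsing it costs nothing up to homotopy. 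This is the step I expect to be the main obstacle: verifying that the composite retraction is well-defined up to homotopy and that collapsing the contractible intersection yields the wedge rather than some twisted union.

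Alternatively, and probably more cleanly, I would argue purely stably via \eqref{BBCG} and then invoke Lemma \ref{full subcomplex} to lift. Since the summands indexed by $M_1$ and by $M_2$ both appear in the wedge decomposition of $\Sigma Z_{K_I}(CX,X)$, and since $Z_{K_I}(CX,X)$ is itself a retract of $Z_K(CX,X)$, I would identify the wedge $\Sigma^{|M_1|-1}X^{\wedge|M_1|} \vee \Sigma^{|M_2|-1}X^{\wedge|M_2|}$ with the sub-wedge of \eqref{BBCG} spanned by these two index sets, and use naturality of the BBCG splitting to produce a compatible stable retraction; the final upgrade to an unstable retract follows because each summand is itself realised unstably as $Z_{\partial\Delta^{M_j}}(CX,X)$ sitting inside $Z_{K_I}(CX,X)$ as a full-subcomplex retract. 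The subtlety to be careful about throughout is that the two full subcomplexes overlap in the common simplex on $M_1 \cap M_2$, and one must check this overlap does not obstruct separating the two wedge summands.
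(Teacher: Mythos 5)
You have set the problem up correctly (reduce to $I=M_1\cup M_2$ via Lemma \ref{full subcomplex}, identify each summand with $Z_{\partial\Delta^{M_j}}(CX,X)$ as in Lemma \ref{discrete}), and you have correctly located the crux: producing the two summands \emph{simultaneously}. But neither of your two routes actually crosses that gap, and there is a decisive reason why: nowhere does your argument use the hypothesis $M_1\cap M_2\ne\emptyset$ in an essential way. You invoke it only to make $Z_{K_{M_1}}\cap Z_{K_{M_2}}=Z_{K_{M_1\cap M_2}}$ contractible, but that intersection is equally contractible (a single point) when $M_1\cap M_2=\emptyset$ -- and in that case the conclusion is \emph{false}. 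For example, with $K=\partial\Delta^{M_1}\ast\partial\Delta^{M_2}$, $|M_1|=|M_2|=2$ and $X=S^1$, Lemma \ref{join} gives $Z_K(CX,X)\simeq S^3\times S^3$, and $S^3\vee S^3$ is not a retract of $S^3\times S^3$ (the Whitehead product $[\iota_1,\iota_2]$ is rationally nontrivial in $\pi_5(S^3\vee S^3)$ while $\pi_5(S^3\times S^3)\otimes\QQ=0$; this is exactly why case (3)(d) of Theorem \ref{Z(CX,X) rational} is consistent with Lemma \ref{wedge of spheres}). Any valid proof must therefore exploit the overlap beyond contractibility of the intersection.

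Concretely, the failures are: (a) the ``collapse of $Z_K(CX,X)$ onto $Z_{K_{M_1}}\cup Z_{K_{M_2}}$'' does not exist in general -- $K_{M_1}\cup K_{M_2}$ is usually not a full subcomplex of $K_I$ (e.g.\ if some $v\in M_1\setminus M_2$, $w\in M_2\setminus M_1$ span an edge of $K$), so Lemma \ref{full subcomplex} gives no such map, and combining the two individual retractions $r_{M_1},r_{M_2}$ only produces a map to the \emph{product} $Z_{K_{M_1}}\times Z_{K_{M_2}}$, whose restriction to the wedge is the standard inclusion, which has no left homotopy inverse in general; (b) the proposed ``upgrade'' of the stable BBCG splitting is a non sequitur: knowing that $A$ and $B$ are each unstable retracts of $Z$ and that $A\vee B$ is a stable retract of $Z$ does not imply $A\vee B$ is an unstable retract of $Z$ (again $Z=S^3\times S^3$, $A=B=S^3$ is a counterexample). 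For what it is worth, the paper itself does not prove this lemma; it quotes it from \cite[Proposition 4.1]{HST}, and the argument there has to build the retraction onto the wedge by a genuinely different mechanism that uses the overlapping vertex.
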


\begin{theorem}
  \label{Z(CX,X) rational}
  If each connected component of $X$ is of finite rational type, then the following are equivalent:
  \begin{enumerate}
    \item $Z_K(CX,X)$ belongs to $\PP_0$;
    \item $Z_K(CX,X)$ is either a $K(\pi,1)$, or of the homotopy type of a finite product of rational spheres of dimension $>1$;
    \item one of the following conditions holds:
    \begin{enumerate}
      \item $K$ is a simplex;
      \item $X$ is rationally acyclic and $K$ is not a simplex;
      \item $X$ is a disjoint union of rationally acyclic spaces and $K$ is a flag complex that is not a simplex;
      \item $X$ is a homology rational sphere and minimal non-faces of $K$ are mutually disjoint.
    \end{enumerate}
  \end{enumerate}
\end{theorem}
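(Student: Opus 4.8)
The plan is to prove the cyclic chain $(1)\Rightarrow(3)\Rightarrow(2)\Rightarrow(1)$, reducing the combinatorics to the suspension splitting \eqref{BBCG}, the invariance statement of Proposition \ref{invariance}, and the rational ellipticity/hyperbolicity obstruction packaged in Lemma \ref{wedge of spheres}. The implication $(2)\Rightarrow(1)$ is immediate: a $K(\pi,1)$ has weakly contractible universal cover and hence lies in $\PP_0$, while a finite product of rational spheres of dimension $>1$ is simply-connected and rationally elliptic, so it lies in $\E_0\subseteq\PP_0$ by Lemma \ref{lem-P0P1P2}. Throughout I use that, for a space of finite rational type, being rationally acyclic is the same as being acyclic, and that a simply-connected space of finite rational type is rational.

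For $(3)\Rightarrow(2)$ I would treat the four cases in turn. If $K$ is a simplex then $Z_K(CX,X)\simeq *$. If $X$ is rationally acyclic then $\Sigma X$ is contractible, so Proposition \ref{invariance} gives $Z_K(CX,X)\simeq *$. If $X$ is a disjoint union of acyclic spaces and $K$ is a flag complex, I would use Corollary \ref{cor-acyclic} to replace $X$ by the discrete set $S=\pi_0(X)$ and then by a discrete group $G$ of the same cardinality, and argue exactly as in the proof of Theorem \ref{Z(CX,X) p-local}: the fibration $Z_K(CG,G)\to Z_K(BG,*)\to BG^m$ together with Lemma \ref{K(pi,1)} shows $Z_K(CX,X)$ is a $K(\pi,1)$. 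Finally, if $X$ is a homology rational sphere, say $X\simeq_{\QQ} S^n$, and the minimal non-faces $M_1,\dots,M_k$ of $K$ are mutually disjoint, I would note that $K=\Delta^{M_0}*\partial\Delta^{M_1}*\cdots*\partial\Delta^{M_k}$, where $M_0$ is the set of remaining vertices, so Lemma \ref{join} and Proposition \ref{invariance} yield $Z_K(CX,X)\simeq\prod_{j=1}^k\Sigma^{|M_j|-1}X^{\wedge|M_j|}\simeq\prod_{j=1}^k S^{(n+1)|M_j|-1}_{\QQ}$, a finite product of rational spheres of dimension $\ge 3$.

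The implication $(1)\Rightarrow(3)$ is the heart of the matter. Assume $Z_K(CX,X)\in\PP_0$ and $K$ is not a simplex, so $K$ has a minimal non-face. \emph{Connected case.} Suppose $X$ is connected and not rationally acyclic. If two minimal non-faces $M_1,M_2$ intersect, Lemma \ref{mutually disjoint} exhibits the wedge $\Sigma^{|M_1|-1}X^{\wedge|M_1|}\vee\Sigma^{|M_2|-1}X^{\wedge|M_2|}$ of two rationally non-trivial suspensions, the first simply-connected, as a retract of $Z_K(CX,X)$, contradicting Lemma \ref{wedge of spheres}; hence the minimal non-faces are mutually disjoint. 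The join decomposition then writes $Z_K(CX,X)$ as a product of the simply-connected suspensions $\Sigma^{|M_j|-1}X^{\wedge|M_j|}$, each of which is a retract, hence lies in $\PP_0$, hence is rational and rationally a wedge of spheres; if any such wedge contained two spheres, Lemma \ref{wedge of spheres} would again be violated, so each factor is rationally a single sphere, and the Künneth formula forces $\dim_{\QQ}\widetilde H_*(X;\QQ)=1$, i.e. case (d). \emph{Disconnected case.} Here $S^0$ is a retract of $X$; were there a minimal non-face $M$ with $|M|\ge 3$, then $S^{|M|-1}$ would be a simply-connected, hence non-rational, retract of the universal cover of $Z_K(CX,X)\in\PP_0$, which is impossible (exactly the flagness argument of Theorem \ref{Z(CX,X) p-local}), so $K$ is flag. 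Picking a missing edge, $X*X\simeq\Sigma(X\wedge X)$ is a retract by Lemma \ref{full subcomplex}; splitting a component $X_0$ off as $X\simeq X_0\vee W$ produces $\Sigma(X_0\wedge X_0)\vee\Sigma X_0$ as a retract, and if $X_0$ were not rationally acyclic, Lemma \ref{wedge of spheres} would be contradicted. Hence every component is acyclic, giving case (c).

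I expect the implication $(1)\Rightarrow(3)$ to be the main obstacle, and within it the technical crux is the systematic translation of rational hyperbolicity into an obstruction to lying in $\PP_0$ via Lemma \ref{wedge of spheres}. The remaining work is combinatorial and case-dependent: one must, from the non-disjointness of minimal non-faces (connected case) or from disconnectedness together with a missing edge (disconnected case), produce an explicit wedge retract of $Z_K(CX,X)$ with two rationally non-trivial suspension summands, one of them simply-connected, so that the lemma applies; and one must keep careful track of connectivity, since condition (d) can occur only for connected $X$ whereas the $K(\pi,1)$ of case (c) requires $X$ disconnected.
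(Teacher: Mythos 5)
Your proposal is correct and follows essentially the same route as the paper: the cyclic chain of implications, the retract $\Sigma^{|M|-1}X^{\wedge|M|}$ from a minimal non-face, Lemmas \ref{wedge of spheres} and \ref{mutually disjoint} together with the wedge-of-rational-spheres structure of simply-connected rational suspensions to force case (d) or rational acyclicity, the $S^0$-retract argument for flagness in the disconnected case, and the join decomposition plus the discrete-group trick for the converse. The only deviations are cosmetic (you establish disjointness of minimal non-faces before identifying $X$ as a rational homology sphere, and you use the retract $\Sigma X_0^{\wedge 2}\vee\Sigma X_0$ where the paper uses $S^1\vee\Sigma X_0^{\wedge 2}$; also note $X_0\vee S^0$ is a retract of the disjoint union $X$, rather than $X\simeq X_0\vee W$ as written).
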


\begin{proof}
  Suppose that $Z_K(CX,X)\in\PP_0$ and that  $K$ is not a simplex. Then $Z_K(CX,X)$ is a homotopy retract of a space $Y$, whose universal cover admits a finite type rational generalised Postnikov tower of finite length.  Since $K$ is not a simplex, it has a minimal non-face $M$. Hence, as in the proof of Lemma \ref{discrete}, $\Sigma^{|M|-1}X^{\wedge|M|}$ is a homotopy retract of $Z_K(CX,X)$, and so $\Sigma^{|M|-1}X^{\wedge|M|}\in\PP_0$. If $X_0$ is any connected component of $X$, then $\Sigma^{|M|-1}X^{\wedge |M|}_0$ is a retract of $\Sigma^{|M|-1}X^{\wedge|M|}$ and hence also in $\PP_0$. But since $|M|\geq 2$, the space $\Sigma^{|M|-1}X^{\wedge |M|}_0$ is a simply-connected suspension.

If $X$ is connected then $\Sigma^{|M|-1}X^{\wedge|M|}$ is a simply-connected rational suspension space. Hence by \cite[Theorem 24.5]{FHT}, if $X$ is not rationally acyclic, then it is a wedge of rational spheres of dimension $>1$. If the number of rational spheres in that wedge is larger than one, then for some $m,n>1$ the wedge $S^m_{(0)}\vee S^n_{(0)}$  is a homotopy retract of $\Sigma^{|M|-1}X^{\wedge|M|}$, implying $S^m_{(0)}\vee S^n_{(0)}\in\PP_0$. By Lemma \ref{wedge of spheres}, this is a contradiction. Hence  $X$ is either a single rational homology sphere, as in (3)(d) or rationally acyclic, as in (3)(b). In the first case, if $K$ has minimal non-faces $M_1,M_2$ such that $M_1\cap M_2\ne\emptyset$, then by Lemma \ref{mutually disjoint},  a wedge of rational spheres of dimension $>1$ is a homotopy retract of $Z_K(CX,X)$. However, this is impossible by Lemma \ref{wedge of spheres}, and thus minimal non-faces of $K$ are mutually disjoint.

Suppose $X$ has more than one connected component. As before, since $S^0$ is a retract of $X$, $S^{|M|-1}$ is a homotopy retract of $Z_K(CX,X)$.  If $K$ is not a flag complex, then $|M|\geq 3$. Thus $S^{|M|-1}$ is simply connected and hence a homotopy retract of the universal cover of  $Z_K(CX,X)$. Since $Z_K(CX,X)\in\PP_0$, its universal cover is a rational space. But $S^{|M|-1}$ is not a rational space and we obtain a contradiction. Thus $|M|=2$, and it follows that $K$ is a flag complex.
Let $X_0$ be a connected component of $X$. Since $X$ is not connected, $S^0\vee X_0$ is a retract of $X$. It follows easily that $S^1\vee\Sigma X_0^{\wedge 2}$ is a retract of $\Sigma X^{\wedge 2}$, and hence that $S^1\vee\Sigma X_0^{\wedge 2}\in\PP_0$. If $X_0$ is not rationally acyclic, then $\Sigma X_0^{\wedge 2}$ is a simply-connected rationally non-trivial suspension. Thus one gets a contradiction to Lemma \ref{wedge of spheres}, hence any connected component of $X$ must be rationally acyclic, as in (3)(c). The proof  that (1) implies (3) is thus complete.

  If one of the conditions (3) (a), (b) or (c) holds, then $Z_K(CX,X)$ is a $K(\pi,1)$ by the same argument as the proof of Theorem \ref{Z(CX,X) p-local}. Suppose that condition (3)(d) holds. Let $M_1,\ldots,M_r$ be all minimal non-faces of $K$. For a finite set $S$, let $\partial\Delta^S$ denote the boundary of a full simplex over $S$. Since $M_1\ldots,M_r$ are mutually disjoint, we can consider a simplicial complex $L=\partial\Delta^{M_1}*\cdots*\partial\Delta^{M_r}$. Take any subset $\sigma\subset[m]$ such that $\sigma\cap M_i\ne M_i$ for each $i$. Then $\sigma$ is a simplex of $L$, implying that all minimal non-faces of $L$ are $M_1,\ldots,M_r$. Since simplicial complexes are determined by their minimal non-faces, one gets $K=L$.  By Lemma \ref{join},
  \[Z_K(CX,X)\simeq Z_{\partial\Delta^{M_1}}(CX,X)\times\cdots\times Z_{\partial\Delta^{M_r}}(CX,X).\]
  Since $Z_{\partial\Delta^{M_i}}(CX,X)\simeq\Sigma^{|M_i|-1}X^{\wedge|M_i|}$ and $X$ is assumed to be a rational homology sphere, $Z_K(CX,X)$ is of the homotopy type of a product of rational spheres of dimension at least 2.  
  Thus (3) implies (2). Clearly, (2) implies (1), and therefore the proof is complete.
\end{proof}



\section{Proofs of the main theorems}
\label{Main}

We are now ready to restate and prove Theorem \ref{main}.

\begin{theorem}
  \label{main rephrase}
 Let $K$ be a simplicial complex, and let $(X,A)$ be an NDR-pair. Let $F$ denote the homotopy fibre of the inclusion $A\to X$, and assume that each component of $F$ is of finite type. Then $Z_K(X,A)$ belongs to $\PP$ if and only if one of the following conditions holds:
 \begin{enumerate}
 	\item $X\in\PP$ and $K$ is a simplex;\label{Thm-main-body-condition-1}
 	\item $X\in\PP$, $F$ is an acyclic space, and $K$ is not a simplex;\label{Thm-main-body-condition-2}
 	\item $X\in\PP$, $F$ is a disjoint union of acyclic spaces, and $K$ is a flag complex that is not a simplex. \label{Thm-main-body-condition-3}
 \end{enumerate}
 Moreover, the statement holds for $\PP$ replaced by $\F$ or $\K$ throughout.
\end{theorem}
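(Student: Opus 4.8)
The plan is to extract both directions of the equivalence from the single homotopy fibration
$$Z_K(CF,F)\to Z_K(X,A)\to X^m$$
produced by Proposition \ref{fibration} for the constant pair $(X_i,A_i)=(X,A)$ (so that each $F_i=F$), combined with the already-established characterisation of the fibre in Theorem \ref{Z(CX,X)}. Note that the fibre $Z_K(CF,F)$ is connected, since the cone coordinates join up the components of $\underline F$.

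For the forward direction, suppose $Z_K(X,A)\in\PP$. Taking $I=\{i\}$ in Lemma \ref{full subcomplex} exhibits $X$ as a retract of $Z_K(X,A)$, so $X\in\PP$ by closure under homotopy retracts, and likewise $X\in\F$ or $X\in\K$ in those variants via Lemma \ref{F-K}(2). The base $X^m$ lies in $\PP$ by Lemma \ref{lem-products}, so Proposition \ref{P} forces the fibre $Z_K(CF,F)$ into $\PP$; for the $\F$ and $\K$ variants I would first invoke the inclusions of Lemma \ref{lem-P0P1P2} to land in $\PP$, since the combinatorial conclusion is the same regardless of class. Theorem \ref{Z(CX,X)}, applicable because each component of $F$ is of finite type, then yields exactly the trichotomy (\ref{Thm-main-body-condition-1})--(\ref{Thm-main-body-condition-3}) on $F$ and $K$.

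The backward direction I would split along the three conditions. If $K$ is a simplex then $Z_K(X,A)=X^m$, which lies in the relevant class by closure under finite products. If $F$ is acyclic and $K$ is not a simplex, then $\Sigma F\simeq*$ (finite type together with acyclicity), so Proposition \ref{invariance} collapses the fibre, $Z_K(CF,F)\simeq Z_K(C*,*)\simeq*$, and the fibration degenerates to an equivalence $Z_K(X,A)\simeq X^m$; again we are done. Neither case needs input beyond product-closure.

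The remaining case, $F$ a disjoint union of acyclic spaces with $K$ a flag complex, is the crux and the main obstacle, because here $Z_K(CF,F)$ is a genuine aspherical $K(\pi,1)$ and $\PP$ is not closed under total spaces of fibrations. My plan is to pass to universal covers to abelianise the fibre. Pulling the fibration back along $\widetilde{X}^m\to X^m$ gives a fibration $Z_K(CF,F)\to E'\to\widetilde{X}^m$ over a simply-connected base, and taking the universal cover of the total space (which is $\widetilde{Z_K(X,A)}$, as $E'\to Z_K(X,A)$ is a covering) produces a fibration
$$K(N,1)\to\widetilde{Z_K(X,A)}\to\widetilde{X}^m,$$
where $N=\Ima(\partial\colon\pi_2(X)^m\to\pi_1(Z_K(CF,F)))$ is the image of a boundary map, hence \emph{abelian}, and finitely generated because $\widetilde X$ is of finite type. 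For $\F$ and $\K$ I would finish at once: $K(N,1)$ has contractible universal cover and so lies in $\F$, the base $\widetilde{X}^m$ lies in the class by Lemma \ref{F-K}(1), and Lemma \ref{F-K}(3) then places $\widetilde{Z_K(X,A)}$ in the class. For $\PP$, since the fibre is abelian and the base simply-connected, the fibration is principal and so classified by a map $\widetilde{X}^m\to K(N,2)$; this rewrites $\widetilde{Z_K(X,A)}\simeq\hofib(\widetilde{X}^m\to K(N,2))$, and Proposition \ref{P} (with $E=\widetilde{X}^m\in\PP$ and the finite-type GEM $B=K(N,2)\in\PP$) gives $\widetilde{Z_K(X,A)}\in\PP$, whence $Z_K(X,A)\in\PP$. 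The hard part is exactly this abelianisation-via-universal-covers step together with the principality of the resulting $K(N,1)$-fibration, which is what converts a non-principal aspherical fibration into one to which Proposition \ref{P} applies.
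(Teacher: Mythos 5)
Your proposal is correct and follows essentially the same route as the paper: the same fibration $Z_K(CF,F)\to Z_K(X,A)\to X^m$ from Proposition \ref{fibration}, Theorem \ref{Z(CX,X)} driving both implications, and, in the crucial flag-complex case, the same principal fibration $\widetilde{Z_K(X,A)}\to\widetilde{X}^m\to K(N,2)$ fed into Proposition \ref{P}. The only cosmetic differences are that the paper obtains that principal fibration by comparing $2$-connected covers in a homotopy pullback square (identifying your $N$ as $\coKer\{\pi_2(Z)\to\pi_2(\widetilde{X}^m)\}$) rather than by pulling back along $\widetilde{X}^m\to X^m$ and passing to universal covers, and that it handles the acyclic-fibre case together with case (3) via Theorem \ref{Z(CX,X)} instead of collapsing the fibre with Proposition \ref{invariance}.
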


\begin{proof}
If $X\in \PP$ and $K$ is a simplex, then $Z_K(X,A) = X^m$, so $Z_K(X,A)\in\PP$ by Lemma \ref{lem-products}. Thus assume that $K$ is not a simplex and that either the condition  \eqref{Thm-main-body-condition-2} or \eqref{Thm-main-body-condition-3} is satisfied. Then, by Theorem \ref{Z(CX,X)}, $Z_K(CF, F)$ is a $K(\pi,1)$.
By Proposition \ref{fibration} one has a fibration
  \begin{equation}
\label{fibrationfinal}
Z_K(CF,F)\to Z_K(X,A)\to X^m.
\end{equation}
Let $\widetilde{X}$ denote the universal cover of $X$, and let $Z$  denote the universal cover of $Z_K(X,A)$.
From the homotopy sequence for \eqref{fibrationfinal} it follows that $Z_K(X,A)$ and $X^m$ have homotopy equivalent 2-connected covers. Thus one has a homotopy commutative square
\begin{equation}\xymatrix{
Z\ar[d]\ar[r] & \widetilde{X}^m\ar[d]\\
K(\pi_2(Z),2) \ar[r] & K(\pi_2(\widetilde{X}^m),2)
}
\label{pullback-Universal-cover}
\end{equation}
where the vertical maps are the classifying maps for the 2-connected covers. Thus the diagram is a homotopy pullback square.

Let \[G=\mathrm{Im}\{\pi_2(X^m)\xto{\delta}\pi_1(Z_K(CF,F))\} \cong \coKer\{\pi_2(Z)\to\pi_2(\widetilde{X}^m)\}.\]
It follows that the bottom horizontal map in \eqref{pullback-Universal-cover} is a principal fibration with fibre $K(G,1)$ and hence so is the induced top horizontal map, and by delooping it we obtain a homotopy fibration:
$$Z\to\widetilde{X}^m\xrightarrow{g}K(G,2).$$
Since $\widetilde{X}^m\in\PP$, it follows from Proposition \ref{P} that $Z\in\PP$. Thus $Z_K(X,A)\in\PP$ as desired. The statement for $\F,\K$ follows from the homotopy fibration \eqref{fibrationfinal} and Lemma \ref{F-K}.

Conversely, suppose that $Z_K(X,A)\in\PP$. Since $X$ is a retract of $Z_K(X,A)$ and $\PP$ is closed under homotopy retracts, $X\in\PP$. Hence $X^m\in\PP$ by Lemma \ref{lem-products}, and it follows from \eqref{fibrationfinal} and Proposition \ref{P} that $Z_K(CF,F)\in\PP$. The claim now follows at once from Theorem \ref{Z(CX,X)}. The proof for $\F,\K$ follows similarly by using Lemma \ref{F-K}.
\end{proof}

The proofs of Theorems \ref{main p-local} for $\PP_p$ and \ref{main rational} for $\PP_0$ follows precisely the same line of argument, except one uses Theorems \ref{Z(CX,X) p-local} and \ref{Z(CX,X) rational} instead of Theorem \ref{Z(CX,X)}. Note that one has $p$-local and rational analogs of Lemma \ref{F-K}. Thus the proofs for $\F_p,\K_p,\F_0,\K_0,\E_0$ follow similarly.



\newcommand{\GG}{\mathbf{G}}
\newcommand{\HH}{\mathbf{H}}

\section{Graph products of groups}
\label{Groups}

We are now ready to prove Theorem \ref{application}. For any simple graph $\Gamma$, let $\Fl(\Gamma)$ denote the associated flag complex. Recall that for a graph $\Gamma$ with vertex set $[m]$, and a collection of groups $\GG = \{G_i\}_{i=1}^m$, the graph product $\GG^\Gamma$ is defined to be
\[\mathbf{G}^\Gamma= \langle G_1, \ldots ,G_m]\;|\; [G_i,G_j]=1\;\forall\{i,j\}\in E\rangle,\]
and $K_\Gamma(\mathbf{G})$ denotes the kernel of the canonical map $\mathbf{G}^\Gamma\to \prod_{i=1}^m G_i$.

\begin{lemma}
  \label{BG}
  Let $\Gamma$ be a simple graph with vertex set $[m]$ and $\GG=\{G_i\}_{i=1}^m$ be a collection of discrete groups. Let $B\GG$ denote the collection $\{BG_i\}_{i=1}^m$. Then there is a homotopy equivalence
  $$B(\GG^\Gamma)\simeq Z_{\Fl(\Gamma)}(B\GG,*).$$
\end{lemma}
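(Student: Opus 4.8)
The plan is to use that both spaces are aspherical and reduce the statement to an identification of fundamental groups with the graph product $\GG^\Gamma$. First I would dispose of trivial factors: if some $G_i$ is trivial then $BG_i\simeq *$, and neither side changes upon deleting the vertex $i$. On the left $G_i$ contributes nothing to $\GG^\Gamma$, while on the right, setting $X_i=*$ gives $Z_{\Fl(\Gamma)}(B\GG,*)=Z_{\dl_{\Fl(\Gamma)}(i)}(\underline{X},*)=Z_{\Fl(\Gamma\setminus i)}(\underline{X},*)$. Hence I may assume every $G_i$ is non-trivial. Now $B(\GG^\Gamma)=K(\GG^\Gamma,1)$ is aspherical by definition, and since each $BG_i$ is a non-trivial $K(G_i,1)$ and $\Fl(\Gamma)$ is by construction a flag complex, Lemma \ref{K(pi,1)} shows $Z_{\Fl(\Gamma)}(B\GG,*)$ is also aspherical. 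Two aspherical CW complexes are homotopy equivalent as soon as their fundamental groups are isomorphic (realise an isomorphism by a map and invoke Whitehead's theorem), so the problem reduces to proving $\pi_1\bigl(Z_{\Fl(\Gamma)}(B\GG,*)\bigr)\cong\GG^\Gamma$.

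To compute this fundamental group I would induct on the number of vertices $m$, using the standard pushout decomposition of a polyhedral product at a vertex $v$ (the $(X,*)$-analogue of the pushout used in the proof of Proposition \ref{invariance}), which for the pair $(X_i,*)$ reads
$$Z_K(\underline{X},*)\simeq\bigl(Z_{\lk_K(v)}(\underline{X},*)\times X_v\bigr)\cup_{Z_{\lk_K(v)}(\underline{X},*)}Z_{\dl_K(v)}(\underline{X},*),$$
where all polyhedral products on the right are taken over the remaining vertices $[m]\setminus\{v\}$. Since every $D(\sigma)$ contains the basepoint, all three spaces in this pushout are path-connected, and the subcomplex inclusions are cofibrations, so it is a genuine homotopy pushout to which van Kampen applies. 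For $K=\Fl(\Gamma)$ one checks directly that $\lk_{\Fl(\Gamma)}(v)=\Fl(\Gamma_{N(v)})$ and $\dl_{\Fl(\Gamma)}(v)=\Fl(\Gamma\setminus v)$, where $N(v)$ is the set of neighbours of $v$ and $\Gamma_{N(v)}$ the full subgraph it spans. Applying van Kampen and the inductive hypothesis then gives
$$\pi_1\bigl(Z_{\Fl(\Gamma)}(B\GG,*)\bigr)\cong\GG^{\Gamma\setminus v}*_{\GG^{\Gamma_{N(v)}}}\bigl(\GG^{\Gamma_{N(v)}}\times G_v\bigr).$$

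The concluding step is to recognise this amalgamated product as $\GG^\Gamma$ itself. This is the standard inductive presentation of a graph product: the subgroup of $\GG^\Gamma$ generated by $G_v$ together with the neighbour subgroup $\GG^{\Gamma_{N(v)}}$ is their direct product (because $G_v$ commutes with each $G_w$, $w\in N(v)$, hence with the whole neighbour subgroup), and $\GG^\Gamma$ is the free product of this with $\GG^{\Gamma\setminus v}$ amalgamated over $\GG^{\Gamma_{N(v)}}$. I would verify that the two structure maps coming out of van Kampen are exactly the standard graph-product inclusions: the map into the product factor is $g\mapsto(g,1)$, while the map into $\GG^{\Gamma\setminus v}$ is induced by the full-subcomplex inclusion $\Fl(\Gamma_{N(v)})\subseteq\Fl(\Gamma\setminus v)$, which under the inductive identification (and Lemma \ref{full subcomplex}) is the canonical subgroup inclusion of graph products. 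To keep this matching honest, the inductive statement must be phrased as ``the canonical homomorphism $\GG^\Gamma\to\pi_1\bigl(Z_{\Fl(\Gamma)}(B\GG,*)\bigr)$ sending each generator $g\in G_i$ to the corresponding loop in the vertex factor $BG_i=D(\{i\})$ is an isomorphism,'' so that generators are tracked through the induction; the commuting relations are available because adjacent $G_i,G_j$ lie in the commutative piece $D(\{i,j\})=BG_i\times BG_j$.

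The main obstacle is precisely this last bookkeeping: confirming that the amalgamation produced by van Kampen coincides, both in its amalgamating subgroup and in both structure maps, with the amalgamation in the graph-product decomposition, rather than merely having isomorphic vertex and edge groups. The asphericity input is free from Lemma \ref{K(pi,1)}, and the topological pushout and van Kampen step are routine; the care lies in the group-theoretic identification, for which the key facts are that cliques span direct products in $\GG^\Gamma$ and that full subgraphs induce injective graph-product homomorphisms.
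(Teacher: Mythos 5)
Your proposal is correct and follows essentially the same route as the paper: the paper's proof also consists of identifying $\pi_1(Z_{\Fl(\Gamma)}(B\GG,*))$ with $\GG^\Gamma$ via van Kampen (citing \cite[Proposition 1.2]{S} and \cite[Lemma 4.3]{K} for that computation) and then invoking Lemma \ref{K(pi,1)} for asphericity. You merely carry out the van Kampen induction explicitly instead of citing it, and you additionally handle the case of trivial factors $G_i$, which the statement of Lemma \ref{K(pi,1)} (phrased for non-trivial $K(\pi,1)$'s) does not literally cover --- a small point the paper leaves implicit.
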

\begin{proof}
	For any collection of pointed spaces $\mathbf{X}$, one can easily calculate the fundamental group of $Z_K(\mathbf{X},*)$ using the van Kampen theorem as in \cite[Proposition 1.2]{S} and \cite[Lemma 4.3]{K} such that $\pi_1(Z_{\Fl(\Gamma)}(B\GG,*))\cong \GG^\Gamma$. The lemma now follows from Lemma \ref{K(pi,1)}.
\end{proof}

\begin{proposition}
  \label{K(G)}
  Let $\Gamma$ be a simple graph with vertex set $[m]$ and let  $\GG=\{G_i\}_{i=1}^m$ be a collection of discrete groups. There is a homotopy equivalence
  $$BK_\Gamma(\GG)\simeq Z_{\Fl(\Gamma)}(C\GG,\GG),$$
  where, on the right hand side, each $G_i$ is regarded as a discrete set of points.
\end{proposition}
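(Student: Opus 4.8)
The plan is to realise both sides as the homotopy fibre of the same map into $\prod_{i=1}^m BG_i$ and then appeal to uniqueness of homotopy fibres. First, the defining short exact sequence of groups
$$1\to K_\Gamma(\GG)\to\GG^\Gamma\to\prod_{i=1}^m G_i\to 1$$
yields, upon applying the classifying space functor, a homotopy fibration
$$BK_\Gamma(\GG)\to B(\GG^\Gamma)\to B\Big(\prod_{i=1}^m G_i\Big)\simeq\prod_{i=1}^m BG_i,$$
in which the right-hand map is induced by the canonical homomorphism $\GG^\Gamma\to\prod_i G_i$.

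Next I would apply Proposition \ref{fibration} to the collection of pointed NDR-pairs $(X_i,A_i)=(BG_i,*)$, taking $K=\Fl(\Gamma)$. The homotopy fibre $F_i$ of the inclusion $*\to BG_i$ is $\Omega BG_i\simeq G_i$, a discrete set of points since $G_i$ is discrete; hence $(CF_i,F_i)\simeq(CG_i,G_i)$ as pairs. Proposition \ref{fibration} therefore produces a homotopy fibration
$$Z_{\Fl(\Gamma)}(C\GG,\GG)\to Z_{\Fl(\Gamma)}(B\GG,*)\to\prod_{i=1}^m BG_i,$$
and by Lemma \ref{BG} the middle term is homotopy equivalent to $B(\GG^\Gamma)$.

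The crux of the argument, and the step needing care, is to check that under the equivalence $Z_{\Fl(\Gamma)}(B\GG,*)\simeq B(\GG^\Gamma)$ of Lemma \ref{BG} the two projections to $\prod_i BG_i$ agree up to homotopy. Both total spaces and the base are aspherical: the base obviously, $B(\GG^\Gamma)$ by construction, and $Z_{\Fl(\Gamma)}(B\GG,*)$ by Lemma \ref{K(pi,1)} since $\Fl(\Gamma)$ is a flag complex and each $BG_i$ is a $K(G_i,1)$. Homotopy classes of maps between aspherical spaces are determined by their effect on $\pi_1$, so it suffices to verify that both projections induce the canonical homomorphism $\GG^\Gamma\to\prod_i G_i$. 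For the group fibration this holds by definition; for the polyhedral-product fibration the projection is the natural inclusion $Z_{\Fl(\Gamma)}(B\GG,*)\hookrightarrow\prod_i BG_i$, which on $\pi_1$ sends each generator of $\GG^\Gamma$ (computed by van Kampen as in the proof of Lemma \ref{BG}) to its image in $\prod_i G_i$, i.e.\ the canonical map. Thus the relevant square commutes up to homotopy, and comparing homotopy fibres gives
$$BK_\Gamma(\GG)\simeq Z_{\Fl(\Gamma)}(C\GG,\GG),$$
as claimed. The main obstacle is precisely this $\pi_1$-bookkeeping confirming the two classifying maps coincide; once that is in hand, the conclusion follows immediately.
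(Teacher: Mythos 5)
Your proposal is correct and follows essentially the same route as the paper: both identify $Z_{\Fl(\Gamma)}(C\GG,\GG)$ as the homotopy fibre of the inclusion $Z_{\Fl(\Gamma)}(B\GG,*)\to\prod_{i=1}^m BG_i$ via Proposition \ref{fibration}, and both use the $\pi_1$-computation together with Lemma \ref{BG} to recognise this map as (the classifying-space realisation of) the canonical surjection $\GG^\Gamma\to\prod_i G_i$, whose fibre is $BK_\Gamma(\GG)$. Your write-up merely makes explicit the asphericity/$\pi_1$-bookkeeping that the paper compresses into ``thus the claim follows.''
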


\begin{proof}
 The calculation of the fundamental group of $Z_K(X,*)$ in \cite[Proposition 1.2]{S} and \cite[Lemma 4.3]{K} implies that the inclusion $Z_{\Fl(\Gamma)}(B\GG,*)\to \prod_{i=1}^mBG_i$ together with Lemma \ref{BG} induces the canonical surjection $\GG^\Gamma\to\prod_{i=1}^mG_i$ on fundamental groups. By Proposition \ref{fibration}, there is a homotopy fibration
  $$Z_{\Fl(\Gamma)}(C\GG,\GG)\to Z_{\Fl(\Gamma)}(B\GG,*)\to\prod_{i=1}^mBG_i,$$
  where the second map is the inclusion. Thus the claim follows.
\end{proof}

\begin{theorem}
  \label{main K(G)}
  Let $\GG=\{G_i\}_{i=1}^m$ and $\HH=\{H_i\}_{i=1}^m$ be collections of discrete groups and $\Gamma$ be a simple graph with vertex set $[m]$. If $|G_i|=|H_i|$ for each $i$, then
  $$K_\Gamma(\GG)\cong K_\Gamma(\HH).$$
\end{theorem}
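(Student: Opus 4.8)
The plan is to reduce the purely group-theoretic statement to the homotopy-theoretic machinery already developed in the paper, exploiting the fact that the homotopy type of $Z_{\Fl(\Gamma)}(C\GG,\GG)$ depends only on the cardinalities of the $G_i$ and not on their group structure. First I would invoke Proposition \ref{K(G)} to identify $BK_\Gamma(\GG)\simeq Z_{\Fl(\Gamma)}(C\GG,\GG)$ and likewise $BK_\Gamma(\HH)\simeq Z_{\Fl(\Gamma)}(C\HH,\HH)$, where on each side the groups are regarded merely as discrete sets of points. Since $|G_i|=|H_i|$ for every $i$, each $G_i$ and $H_i$ are homeomorphic discrete spaces, so there is an evident choice of pointed bijections $f_i\colon G_i\to H_i$; these extend to maps of pairs $(Cf_i,f_i)\colon (CG_i,G_i)\to(CH_i,H_i)$.

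The central step is to show that the induced map $Z_{\Fl(\Gamma)}(f)$ is a homotopy equivalence, which is exactly the content of Proposition \ref{invariance}. To apply it I only need that $\Sigma f_i$ is a homotopy equivalence for each $i$; but $f_i$ is a bijection of discrete pointed sets, hence a homeomorphism, and so $\Sigma f_i$ is trivially a homotopy equivalence. Therefore Proposition \ref{invariance} yields a homotopy equivalence
\[
Z_{\Fl(\Gamma)}(C\GG,\GG)\xto{Z_{\Fl(\Gamma)}(f)} Z_{\Fl(\Gamma)}(C\HH,\HH).
\]
Chaining this with the two equivalences from Proposition \ref{K(G)} produces a homotopy equivalence $BK_\Gamma(\GG)\simeq BK_\Gamma(\HH)$.

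Finally I would conclude by taking fundamental groups. Both $Z_{\Fl(\Gamma)}(C\GG,\GG)$ and $Z_{\Fl(\Gamma)}(C\HH,\HH)$ are spaces of type $K(\pi,1)$: indeed by Lemma \ref{K(pi,1)} the relevant $Z_{\Fl(\Gamma)}(B\GG,*)$ is aspherical, and the fibration of Proposition \ref{fibration} exhibits $Z_{\Fl(\Gamma)}(C\GG,\GG)$ as an aspherical space with fundamental group $K_\Gamma(\GG)$ (this is what Proposition \ref{K(G)} records). A homotopy equivalence between Eilenberg–MacLane spaces of type $K(\pi,1)$ induces an isomorphism on fundamental groups, so $K_\Gamma(\GG)\cong\pi_1(BK_\Gamma(\GG))\cong\pi_1(BK_\Gamma(\HH))\cong K_\Gamma(\HH)$, as claimed.

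The main subtlety to watch is not any hard argument but the bookkeeping of base points: the bijections $f_i$ must be chosen to preserve the chosen base point of each $G_i$ (the identity element, say), so that they genuinely define maps of pointed pairs and the polyhedral-product functor $Z_{\Fl(\Gamma)}(-)$ applies. Once pointedness is arranged, every step is an immediate application of results already proved, and the dropping of the countability hypothesis present in \cite{Kim} comes for free, since nothing in Propositions \ref{invariance} and \ref{K(G)} restricts the cardinality of the groups.
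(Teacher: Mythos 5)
Your proof is correct and follows essentially the same route as the paper: identify $BK_\Gamma(\GG)$ and $BK_\Gamma(\HH)$ with the polyhedral products via Proposition \ref{K(G)}, observe these depend only on the cardinalities $|G_i|$, and conclude on fundamental groups. The only (harmless) difference is that you invoke Proposition \ref{invariance} to get a homotopy equivalence, whereas the paper notes that the chosen bijections already induce a homeomorphism $Z_{\Fl(\Gamma)}(C\GG,\GG)\cong Z_{\Fl(\Gamma)}(C\HH,\HH)$ of the polyhedral products, so the heavier machinery is not needed.
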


\begin{proof}
  If $|G_i|=|H_i|$ for each $i$, then $Z_{\Fl(\Gamma)}(C\GG,\GG)\cong Z_{\Fl(\Gamma)}(C\HH,\HH)$. The statement follows at once by Proposition \ref{K(G)}.
\end{proof}

\end{document}